\newcommand{\nc}{\newcommand}
\renewcommand{\top}{\mathrm{top}}
\nc{\one}{\mbox{\bf 1}}
\nc{\invtensor}{\underset{\leftarrow}{\otimes}}
\nc{\const}{\operatorname{const}}
\nc{\ad}{\operatorname{ad}}
\nc{\tr}{\operatorname{tr}}
\nc{\tp}{\operatorname{top}}
\nc{\rank}{\operatorname{rank}}
\nc{\corank}{\operatorname{corank}}
\nc{\codim}{\operatorname{codim}}
\nc{\sdim}{\operatorname{sdim}}
\nc{\mult}{\operatorname{mult}}
\nc{\spn}{\operatorname{span}}
\nc{\Sym}{\operatorname{Sym}}
\nc{\sym}{\operatorname{sym}}
\nc{\id}{\operatorname{id}}
\nc{\Id}{\operatorname{Id}}
\nc{\Ree}{\operatorname{Re}}
\nc{\htt}{\operatorname{ht}}
\nc{\sch}{\operatorname{sch}}
\nc{\str}{\operatorname{str}}
\nc{\Ker}{\operatorname{Ker}}
\nc{\rker}{\operatorname{rKer}}
\nc{\im}{\operatorname{Im}}
\nc{\osp}{\mathfrak{osp}}
\nc{\sgn}{\operatorname{sgn}}
\nc{\F}{\operatorname{F}}
\nc{\Mod}{\operatorname{Mod}}
\nc{\Mat}{\operatorname{Mat}}
\nc{\Soc}{\operatorname{Soc}}
\nc{\Inj}{\operatorname{Inj}}
\nc{\Hom}{\operatorname{Hom}}
\nc{\End}{\operatorname{End}}
\nc{\supp}{\operatorname{supp}}
\nc{\Card}{\operatorname{Card}}
\nc{\Ann}{\operatorname{Ann}}
\nc{\Ind}{\operatorname{Ind}}
\nc{\Coind}{\operatorname{Coind}}
\nc{\wt}{\operatorname{wt}}
\nc{\ch}{\operatorname{ch}}
\nc{\Stab}{\operatorname{Stab}}
\nc{\Sch}{{\mathcal S}\mbox{\em ch}}
\nc{\Irr}{\operatorname{Irr}}
\nc{\Spec}{\operatorname{Spec}}
\nc{\Prim}{\operatorname{Prim}}
\nc{\Aut}{\operatorname{Aut}}
\nc{\Ext}{\operatorname{Ext}}
\nc{\Fract}{\operatorname{Fract}}
\nc{\gr}{\operatorname{gr}}
\nc{\deff}{\operatorname{def}}
\nc{\HC}{\operatorname{HC}}
\nc{\red}{\operatorname{red}}
\nc{\wdchi}{\widetilde{\chi}}
\nc{\wdH}{\widetilde{H}}
\nc{\wdN}{\widetilde{N}}
\nc{\wdM}{\widetilde{M}}
\nc{\wdO}{\widetilde{O}}
\nc{\wdR}{\widetilde{R}}
\nc{\wdS}{\widetilde{S}}
\nc{\wdV}{\widetilde{V}}
\nc{\wdC}{\widetilde{C}}
\nc{\Obj}{\operatorname{Obj}}
\nc{\Dglie}{\operatorname{{\mathcal D}glie}}
\nc{\Fin}{\operatorname{{\mathcal F}in}}
\nc{\Adm}{\operatorname{\mathcal{A}dm}}
\nc{\Sg}{{\cS(\fg)}}
\nc{\Shg}{{\cS(\fhg)}}
\nc{\Ug}{{\cU(\fg)}}
\nc{\Uhg}{{\cU(\fhg)}}
\nc{\Sh}{{\cS(\fh)}}
\nc{\Uh}{{\cU(\fh)}}
\nc{\Uhh}{{\cU(\fhh)}}
\nc{\Zg}{{{\mathcal{Z}}(\fg)}}
\nc{\Vir}{{\mathcal{V}ir}}
\nc{\NS}{{\mathcal{N}S}}
\nc{\tZg}{{\widetilde{\mathcal Z}({\mathfrak g})}}
\nc{\Zk}{{\mathcal Z}({\mathfrak k})}
\newcommand{\D}{\mathcal{D}}
\nc{\Up}{{\mathcal U}({\mathfrak p})}
\nc{\Ah}{{\mathcal A}({\mathfrak h})}
\nc{\Ag}{{\mathcal A}({\mathfrak g})}
\nc{\Ap}{{\mathcal A}({\mathfrak p})}
\nc{\Zp}{{\mathcal Z}({\mathfrak p})}
\nc{\cR}{\mathcal R}
\nc{\cS}{\mathcal S}
\nc{\cT}{\mathcal{T}}
\nc{\cY}{\mathcal Y}
\nc{\cB}{\mathcal B}
\nc{\cU}{\mathcal U}
\nc{\cH}{\mathcal H}
\nc{\cM}{\mathcal M}
\nc{\cL}{\mathcal L}
\nc{\cF}{\mathcal F}
\nc{\fg}{\mathfrak g}
\nc{\fo}{\mathfrak o}
\nc{\CO}{\mathcal O}
\nc{\CR}{\mathcal R}
\nc{\Cl}{\mathcal {C}\ell}
\nc{\cW}{\mathcal{W}}
\nc{\bM}{\mathbf{M}}
\nc{\bL}{\mathbf{L}}
\nc{\bN}{\mathbf{N}}
\nc{\zq}{\mathpzc q}
\nc{\fl}{\mathfrak l}
\nc{\fn}{\mathfrak n}
\nc{\fm}{\mathfrak m}
\nc{\fp}{\mathfrak p}
\nc{\fh}{\mathfrak h}
\nc{\ft}{\mathfrak t}
\nc{\fk}{\mathfrak k}
\nc{\fb}{\mathfrak b}
\nc{\fs}{\mathfrak s}
\nc{\psl}{\mathfrak{psl}}
\nc{\fB}{\mathfrak B}
\nc{\vareps}{\varepsilon}
\nc{\varesp}{\varepsilon}
\nc{\veps}{\varepsilon}
\nc{\fsl}{\mathfrak{sl}}
\nc{\fgl}{\mathfrak{gl}}
\nc{\fso}{\mathfrak{so}}
\nc{\fpq}{\mathfrak{pq}}
\nc{\fq}{\mathfrak q}
\nc{\fsq}{\mathfrak{sq}}
\nc{\fpsq}{\mathfrak{psq}}
\nc{\fhg}{\hat{\fg}}
\nc{\fhn}{\hat{\fn}}
\nc{\fhh}{\hat{\fh}}
\nc{\fhb}{\hat{\fb}}
\nc{\hrho}{\hat{\rho}}
\nc{\hsl}{\hat{\fsl}}
\nc{\fpo}{\mathfrak{po}}
\nc{\dirlim}{\underset{\rightarrow}{\lim}\,}
\nc{\nen}{\newenvironment}
\nc{\ol}{\overline}
\nc{\ul}{\underline}
\nc{\ra}{\rightarrow}
\nc{\lra}{\longrightarrow}
\nc{\Lra}{\Longrightarrow}
\nc{\bo}{\bar{1}}
\nc{\Lla}{\Longleftarrow}
\nc{\Llra}{\Longleftrightarrow}
\nc{\thla}{\twoheadleftarrow}
\nc{\lang}{(}
\nc{\rang}{)}
\nc{\hra}{\hookrightarrow}
\nc{\iso}{\overset{\sim}{\lra}}
\nc{\ssubset}{\underset{\not=}{\subset}}
\nc{\vac}{|0\rangle}
\nc{\Thm}[1]{Theorem~\ref{#1}}
\nc{\Prop}[1]{Proposition~\ref{#1}}
\nc{\Lem}[1]{Lemma~\ref{#1}}
\nc{\Cor}[1]{Corollary~\ref{#1}}
\nc{\Conj}[1]{Conjecture~\ref{#1}}
\nc{\Claim}[1]{Claim~\ref{#1}}
\nc{\Defn}[1]{Definition~\ref{#1}}
\nc{\Exa}[1]{Example~\ref{#1}}
\nc{\Rem}[1]{Remark~\ref{#1}}
\nc{\Note}[1]{Note~\ref{#1}}
\nc{\Quest}[1]{Question~\ref{#1}}
\nc{\Hyp}[1]{Hypoth\`ese~\ref{#1}}
\begin{document}

\setcounter{section}{0}
\setcounter{tocdepth}{1}

\title{Integrable modules over affine Lie superalgebras  $\mathfrak{sl}(1|n)^{(1)}$ }

\author{Maria Gorelik,  Vera Serganova }

\address[]{Dept. of Mathematics, The Weizmann Institute of Science,Rehovot 7610001, Israel}
\email{maria.gorelik@weizmann.ac.il}

\address[]{Dept. of Mathematics, University of California at Berkeley ,Berkeley CA 94720}
\email{serganov@math.berkeley.edu}
\thanks{Supported in part by BSF Grant 2012227.}

\begin{abstract}
We describe the category of integrable $\mathfrak{sl}(1|n)^{(1)}$-modules
with the positive central charge and show that the irreducible modules
provide the full set of irreducible  representations for the corresponding simple vertex algebra.
\end{abstract}
\maketitle

\section{Introduction}
Let $\fg$ be the Kac-Moody superalgebra $\fsl(1|n)^{(1)}, n\geq 2$.
Recall that $\fg_{\ol{0}}=\fgl_n^{(1)}$.
We call a $\fg$-module {\em integrable}
if it is integrable over the affine Lie algebra $\fsl_n^{(1)}$,
locally finite over the Cartan subalgebra $\fh\subset \fgl_n^{(1)}$ and with finite-dimensional generalized
$\fh$-weight spaces.

We normalize the invariant bilinear form on $\fg$ in the usual way
($(\alpha,\alpha)=2$ for the non-isotropic roots $\alpha$).
Let $\cF_k$ be the category of the finitely generated  integrable $\fg$-modules
with central charge $k$.
 This category is empty for $k\not\in\mathbb{Z}_{\geq 0}$.
For $k\in \mathbb{Z}_{>0}$, the irreducible objects in $\cF_k$ are
highest weight modules, which were classified in~\cite{KW2} (see~\cite{FR}, Theorem C).
In this paper we study the category $\cF_k$ for $k\in\mathbb{Z}_{>0}$:
we describe the blocks  (see~\Cor{cortyp} and~\Thm{thmatyp}) in
 terms of quivers with relations and
 show that Duflo-Serganova functor
provides an invariant for the atypical blocks (see~\Cor{corFx})
and this invariant separates the blocks.

In Section~\ref{sect6} we study modules over the simple affine vertex superalgebra $V_k(\fg)$. Recall that the modules over affine vertex algebra $V^k(\fg)$ are the restricted  $\fg$-modules of level $k$.

Let
$\ft$ be an affine Lie algebra, $V^k(\ft)$ be the affine vertex algebra with central charge $k$ and $V_k(\ft)$ denote its simple
quotient. Let $k$ be such that
 $V_k(\ft)$ is integrable (as a $\ft$-module); for an appropriate
 normalization of the bilinear form, this means that $k$ is a non-negative integer.
 In this case,
the $V_k(\ft)$-modules  are the restricted integrable $[\ft,\ft]$-modules of level $k$. For $k\geq 0$ the  irreducible restricted integrable $[\ft,\ft]$-modules of level $k$ are  highest weight modules.
In particular, the vertex algebra $V_k(\ft)$ is rational and regular:

(a) there are finitely many (up to isomorphism)
irreducible  $V_k(\ft)$-modules;

(b) any representation is completely reducible.

For positive energy $V_k(\ft)$-modules
these results were  proven in~\cite{FZ}.
In~\cite{DLM} it is shown that any $V_k(\ft)$-module is a direct sum of positive energy modules, which implies the general result.

Let $\fg$ be an (untwisted) affine Lie superalgebra and $\fg_{\ol{1}}\not=0$.
Let $\fg^{\#}$ be the "largest affine subalgebra"
 of $\fg_{\ol{0}}$ (see~\S~\ref{largest}). Let $k$ be such that
 $V_k(\fg)$ is integrable as a $\fg^{\#}$-module (for an appropriate
 normalization of the bilinear form, this means that $k$ is a non-negative integer).
 We prove that the $V_k(\fg)$-modules  are the restricted $\fg$-modules of level $k$
which are $\fg^{\#}$-integrable. We show that
the irreducible positive energy $V_k(\fg)$-modules are highest weight modules
if and only if the Dynkin diagram of $\fg_{\ol{0}}$ is connected ($\fg$
is  $\fsl(1|n)^{(1)}$ or
$\mathfrak{osp}(n|m)^{(1)}$ for $n=1,2$).

 For $\fg=\fsl(1|n)^{(1)}$ one has
 $\fg^{\#}=\fsl_n^{(1)}$. For a positive integer $k$ the category of
 finitely generated $V_k(\fg)$-modules with finite-dimensional generalized
 weight spaces is the full subcategory of $\cF^k$ with the objects annihilated
 by the Casimir operator.

Partial results of this paper were reported at the conferences
in Uppsala in June 2016, Kyoto in October 2016 and Vienna in January 2017.

{\em Acknowledgments.}
We are grateful to V.~Kac  for helpful discussions.

\section{Preliminaries}
Let $\fg=\fsl(1|n)^{(1)}$.
Recall that by definition an integrable $\fg$-module
is integrable over the affine Lie subalgebra $\fsl_n^{(1)}\subset \fg_{\ol{0}}$
and locally finite over the Cartan subalgebra $\fh$. Recall also that
$\fh\cap [\fsl_n^{(1)},\fsl_n^{(1)}]$ acts diagonally on any integrable
$\fsl_n^{(1)}$-module.

Note that $\cF_k$ is the full subcategory in the category
$\mathcal O$. In particular, it is equipped with a covariant duality functor
$\D$ inherited from the contragredient duality in category $\mathcal{O}$.
For any simple object $L$ we have $\D(L)\simeq L$. In particular,
$\Ext^1(L,L')=\Ext^1(L',L)$ for any two simple objects $L$ and $L'$.

\subsection{Roots and sets of simple roots}
View $\fg$ as the affinization of $\dot{\fg}=\fsl(1|n)$. Choose a basis $\varepsilon_1,\cdot,\varepsilon_n$ of
$\dot{\fh}^*$ such that the invariant form is given by
$$(\varepsilon_i,\varepsilon_j)=\begin{cases}-1\text{ if }i\neq j\\0\ \text{ if }i=j\end{cases}.$$
Even roots of $\dot{\fg}$ are of form
$$\{\varepsilon_i-\varepsilon_j,\,|,i\neq j,\,i,j=1,\dots,n\},$$
and all odd roots are isotropic
$$\{\pm\varepsilon_i\,|,i=1,\dots,n\}.$$
By $\delta$ we denote the smallest positive imaginary root of $\fg$. Then all real roots of $\fg$ are of the form
$\alpha+k\delta$, where $k\in\mathbb Z$ and $\alpha$ is a root of $\dot{\fg}$.

For a set of simple roots $\Sigma$ we consider the standard partial order on $\fh^*$ given by
$\lambda\geq_{\Sigma}\mu$ if and only if $\lambda-\mu\in\mathbb{Z}_{\geq 0}\Sigma$.
We denote by $\rho$ the Weyl vector of $\Sigma$ (i.e., $\rho\in\fh^*$ such that
$2(\rho,\alpha)=(\alpha,\alpha)$ for $\alpha\in\Sigma$).

We fix a triangular decomposition of $\fg_{\ol{0}}$ and consider only triangular decompositions of $\fg$ which are compatible with it
(i.e., $\Delta_{\ol{0}}^+$ is fixed). We denote such sets of simple roots by $\Sigma$, $\Sigma'$, etc.
In fact, the category $\mathcal O$ depends only on a triangular decomposition of the even part.

\subsubsection{}\label{uniqueSigma}
We fix a set of simple roots $\Pi_0$ of $\Delta^+_{\ol{0}}$ and let $\mathcal B$ denote the set of all sets of simple roots $\Sigma$ such that
$\Pi_0\subset\mathbb Z_{\geq 0}\Sigma$. To be precise let
$$\Pi_0=\{\varepsilon_1-\varepsilon_2,\dots,\varepsilon_{n-1}-\varepsilon_n,\varepsilon_n-\varepsilon_1+\delta\}.$$
For example,
\begin{equation}\label{sigma}
\Sigma=\{-\varepsilon_1+\delta,\varepsilon_1-\varepsilon_2,\dots,\varepsilon_{n-1}-\varepsilon_n,\varepsilon_n\}\in\mathcal B.
\end{equation}

For any odd root $\beta$ there exists a unique $\alpha\in\Pi_0$ such that $(\alpha,\beta)=-1$
and a unique $\alpha'\in\Pi_0$ such that $(\beta,\alpha')=1$; the set
$$\Sigma=\{\beta,\alpha'-\beta\}\cup(\Pi_0\setminus\{\alpha'\}).$$
is a unique set of simple roots  containing $\beta$.

Note that all $\Sigma\in\mathcal B$ have the same Dynkin diagram. Every $\Sigma$ contains exactly two odd roots $\beta_1$ and $\beta_2$,
$(\beta_1,\beta_2)=1$ and all roots of $\Pi_0$ are even roots of $\Sigma$ and $\beta_1+\beta_2$. The Dynkin diagram is a cycle with $n+1$ nodes: there are two nodes
which correspond to the odd isotropic roots and these nodes are adjacent.
The minimal imaginary positive root $\delta$ is the sum of all simple roots.

\subsubsection{Odd reflections}\label{odd}
Recall that for an odd root $\beta$ belonging to a set of simple roots $\Sigma$, the odd reflection $r_{\beta}$
gives another set of simple roots $r_{\beta}\Sigma$ which contains $-\beta$, the roots $\alpha\in\Sigma\setminus\{\beta\}$,
which are orthogonal to $\beta$, and the roots $\alpha+\beta$ for $\alpha\in\Sigma$ which are not orthogonal to $\beta$.
One has
\begin{equation}\label{DeltaSigma}
\Delta^+(r_{\beta}\Sigma)=(\Delta^+(\Sigma)\setminus\{\beta\})\cup\{-\beta\}.
\end{equation}

Any two sets of simple roots in $\mathcal B$ are connected by a chain of odd reflections.
We call a chain ``proper'' if it does not have loops (i.e. subsequences of the form $r_{\beta}r_{-\beta}$).
Two sets of simple roots are connected by a unique ``proper'' chain of odd reflections.
We call $\Sigma$ and $\Sigma'$ {\it adjacent} if they are obtained from each other by one odd reflection. The adjacency graph with vertices in $\mathcal B$ is an
infinite string (every vertex has two adjacent vertices).
For any two $\Sigma,\Sigma'\in\mathcal B$ there is a unique proper chain of odd reflections connecting them. We denote by $d(\Sigma,\Sigma')$ the number of odd
reflections in this chain.

Let $\Sigma$ be a set of simple roots. One readily sees that the chain
$r_{\beta_s}r_{\beta_{s-1}}\ldots r_{\beta_1}\Sigma$ is proper if and only if
$\beta_1,\ldots,\beta_s\in\Delta^+(\Sigma)$. Let $\beta\not\in\Sigma$ be an odd root and
$\Sigma'$ be a set of simple roots containing $\beta$ (by above, $\Sigma'$ is unique).
If $\beta\in\Delta^+(\Sigma)$, then the proper
chain which connects $\Sigma$ and $\Sigma'$ does not contain the reflections
$r_{\pm \beta}$; if $\beta\in -\Delta^+(\Sigma)$, then the proper chain is of the form
$\Sigma'=r_{\beta_s}r_{\beta_{s-1}}\ldots r_{\beta_1}\Sigma$, where $\beta_s=\beta$.

\subsection{Simple modules}
 For any $\Sigma\in\mathcal B$ we denote by $L_{\Sigma}(\lambda)$ the irreducible module of highest weight $\lambda$ with respect to the Borel
subalgebra corresponding to $\Sigma$.
Given an irreducible highest weight module
 $L$ and $\Sigma\in\mathcal B$ we set $\rho wt_{\Sigma} L:=\lambda$
if $L=L_{\Sigma}(\lambda-\rho_{\Sigma})$ (where $\rho_{\Sigma}$ is the Weyl vector for $\Sigma$). For an odd root
$\alpha\in\Sigma$  one has
\begin{equation}\label{lambdalambda'}
\rho wt_{r_{\alpha}\Sigma} L=\begin{cases}\rho wt_{\Sigma} L \quad\text{if}
\quad (\lambda,\alpha)\neq 0,\\    \rho wt_{\Sigma} L+\alpha\quad\text{if}\quad (\lambda,\alpha)= 0.
\end{cases}\end{equation}

From~(\ref{lambdalambda'}), it follows that $L_{\Sigma}(\lambda)$ is integrable
if and only if  $(\lambda,\alpha)\in\mathbb{Z}_{\geq 0}$ for every even $\alpha\in\Sigma$, and for two odd roots
$\beta_1,\beta_2\in\Sigma$ one has
either $(\lambda,\beta_1+\beta_2)\in \mathbb{Z}_{>0}$
or $(\lambda,\beta_1)=(\lambda,\beta_2)=0$. Since $\delta$ is the sum of simple roots,
the central charge of a highest weight module $L_{\Sigma}(\lambda)$ is $(\lambda,\sum_{\alpha\in\Sigma}\alpha)$.
In particular, if $L_{\Sigma}(\lambda)$ is not one-dimensional, its central charge is a positive integer.

\subsubsection{Example} Let $\Sigma$ be as in (\ref{sigma}) and let $b_i=(\lambda+\rho_\Sigma,\varepsilon_i)$ for $i=1,\dots, n$. Then $L_{\Sigma}(\lambda)$ is integrable
if and only if $b_1-b_2,\dots,b_{n-1}-b_n\in\mathbb Z_{>0}$ and $b_n-b_1+n+k-1\in\mathbb Z_{>0}$ or $b_n=0,b_1=n+k-1$.

\subsection{}
\begin{lem}{integrable} If  $L_{\Sigma}(\lambda)$ is integrable, then there exists at most one positive real even root $\alpha$ such that
$(\lambda+\rho_\Sigma,\alpha)=0$. Moreover, in this case $\alpha\in\Pi_0$ and
$\alpha$ is a sum of the two odd roots  $\beta_1,\beta_2\in\Sigma$ and
$(\lambda+\rho,\beta_1)=(\lambda+\rho,\beta_2)=0$.
\end{lem}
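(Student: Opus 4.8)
The plan is to reformulate the integrability hypothesis as a dominance condition on $\mu:=\lambda+\rho_\Sigma$ relative to the even base $\Pi_0$, and then run a short support argument. First I would translate the criterion recalled just before the lemma from statements about $\lambda$ into statements about $\mu$. For an even simple root $\alpha\in\Sigma$ one has $(\rho_\Sigma,\alpha)=\tfrac12(\alpha,\alpha)=1$, so $(\lambda,\alpha)\in\mathbb Z_{\geq 0}$ becomes $(\mu,\alpha)\in\mathbb Z_{>0}$; for the two odd roots $\beta_1,\beta_2\in\Sigma$ one has $(\rho_\Sigma,\beta_i)=0$ and $(\rho_\Sigma,\beta_1+\beta_2)=0$, so $(\mu,\beta_i)=(\lambda,\beta_i)$ and $(\mu,\beta_1+\beta_2)=(\lambda,\beta_1+\beta_2)$. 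Recalling that $\Pi_0$ consists precisely of the even simple roots of $\Sigma$ together with $\gamma_0:=\beta_1+\beta_2$, the criterion then reads: $(\mu,\gamma)\in\mathbb Z_{\geq 0}$ for every $\gamma\in\Pi_0$, with $(\mu,\gamma)>0$ for all $\gamma\neq\gamma_0$, while $(\mu,\gamma_0)$ is either a positive integer or $0$; moreover $(\mu,\gamma_0)=0$ forces $(\mu,\beta_1)=(\mu,\beta_2)=0$ (the positive alternative in the odd dichotomy being excluded).

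With this in hand, I would write an arbitrary positive real even root $\alpha$ as a non-negative integer combination $\alpha=\sum_{\gamma\in\Pi_0}c_\gamma\gamma$, which is possible since $\Pi_0$ is a base for $\Delta^+_{\ol0}$, and compute $(\mu,\alpha)=\sum_\gamma c_\gamma(\mu,\gamma)$, a sum of non-negative terms. If $(\mu,\gamma_0)>0$, then every $(\mu,\gamma)$ is strictly positive, so $(\mu,\alpha)>0$ and no positive even root is orthogonal to $\mu$. If $(\mu,\gamma_0)=0$, then $(\mu,\alpha)=0$ forces $c_\gamma=0$ for every $\gamma\neq\gamma_0$, i.e. $\alpha=c_{\gamma_0}\gamma_0$; since $\gamma_0$ and $\alpha$ are both real roots and $A_{n-1}^{(1)}$ is reduced, this gives $\alpha=\gamma_0$. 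Hence there is at most one orthogonal positive even root, and when it exists it equals $\gamma_0=\beta_1+\beta_2\in\Pi_0$, with $(\mu,\beta_1)=(\mu,\beta_2)=0$ by the last clause of the reformulated criterion — exactly the assertion of the lemma.

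I expect the only genuinely delicate point to be the bookkeeping in the first step: correctly matching the dichotomy in the integrability criterion to the combinatorics of $\Pi_0$, and in particular verifying that the unique simple root of $\Pi_0$ which may be orthogonal to $\mu$ is precisely $\beta_1+\beta_2$ while all remaining simple roots pair strictly positively with $\mu$. Once the hypothesis is packaged as ``$\mu$ is $\Pi_0$-dominant and vanishes on at most one simple root,'' the positivity and support argument is routine and, pleasantly, uniform in $\Sigma\in\mathcal B$, so that no separate reduction to the reference base~(\ref{sigma}) nor any tracking of $\mu$ under odd reflections is required.
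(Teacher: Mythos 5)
Your proposal is correct and follows essentially the same route as the paper: both reduce the integrability hypothesis to the statement that $\mu=\lambda+\rho_\Sigma$ pairs strictly positively with every even simple root of $\Sigma$ and non-negatively with $\beta_1+\beta_2$, and then expand an arbitrary positive even real root as a non-negative combination of these elements (i.e.\ of $\Pi_0$) to conclude. Your write-up merely spells out the bookkeeping that the paper leaves implicit.
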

\begin{proof} If $\beta_1,\beta_2$ denote two odd roots of $\Sigma$, then integrability condition implies that $(\lambda+\rho_\Sigma,\alpha)$ is a positive integer
for all even $\alpha\in\Sigma$ and $(\lambda+\rho,\beta_1+\beta_2)$ is a non-negative integer. Since every positive even real root is a non-negative
linear combination of even $\alpha\in\Sigma$ and $\beta_1+\beta_2$ the statement follows.
\end{proof}

\subsubsection{}
\label{corLambdak}
We fix a set of simple roots $\Sigma=\{\alpha_i\}_{i=0}^n$, where $\alpha_1,\alpha_2$ are odd. Note that $(\alpha_1,\alpha_2)=1$. We set
$$\begin{array}{ll}
P^+(\Sigma):=&\{\lambda\in\fh^*|\ (\lambda,\gamma)\in\mathbb{Z}_{\geq 0}\ \text{ for all } \gamma\in\Pi_0\ \text{ and }\\
& (\lambda,\alpha_1+\alpha_2)=0\ \Longrightarrow\ (\lambda,\alpha_1)=(\lambda,\alpha_2)=0\}.\end{array}$$
By above, the irreducible objects of $\cF_k$ are the highest weight modules $L_{\Sigma}(\lambda)$, where $\lambda\in P^+(\Sigma)$; in other words,
setting $a_i:=(\rho wt_{\Sigma} L,\alpha_i)$, we have
\begin{enumerate}
\item $a_i\in \mathbb{Z}_{>0}$ for $i=0$ or $i=3,\dots,n$;
\item $a_1+a_2\in\mathbb{Z}_{>0}$ or $a_1=a_2=0$;
\item $a_0+a_1+\dots+a_n=k+n-1$.
\end{enumerate}

Notice that the numbers $\{a_i\}_{i=0}^n$ determines (up to isomorphism) $L$ as $[\fg,\fg]$-module;
for the  $\fg$-modules $L(\lambda), L(\lambda+s\delta)$ the numbers   $\{a_i\}_{i=0}^n$ are the same, however the Casimir element
acts on  $L(\lambda)$ and on  $L(\lambda+s\delta)$ by different scalars.

\subsubsection{}
\begin{lem}{positiveSigma}
Let $L_{\Sigma}(L)$ be an integrable highest weight module
and $(\lambda,\alpha)$ is real for each $\alpha\in\Sigma$.
 Then
there exists a set of simple roots $\Sigma'$ such that
$(\lambda+\rho_{\Sigma},\alpha)\geq 0$ for every $\alpha\in\Sigma'$.
\end{lem}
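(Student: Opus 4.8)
Set $\nu := \lambda + \rho_\Sigma$; by hypothesis $(\lambda,\alpha)$ is real for every $\alpha\in\Sigma$, so all the pairings $(\nu,\cdot)$ below are real numbers, and the assertion to be proved is that some $\Sigma'\in\mathcal{B}$ makes $\nu$ dominant, i.e. $(\nu,\alpha)\ge 0$ for all $\alpha\in\Sigma'$. My first step is to record what integrability gives. Writing $\beta_1,\beta_2$ for the two odd roots of $\Sigma$ and using the conditions of \S\ref{corLambdak} with $a_i=(\nu,\alpha_i)$, one has $(\nu,\gamma)\in\mathbb{Z}_{>0}$ for every even $\gamma\in\Sigma$, while $(\nu,\beta_1+\beta_2)\in\mathbb{Z}_{\ge 0}$, with equality exactly when $(\nu,\beta_1)=(\nu,\beta_2)=0$. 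By \ref{uniqueSigma} the set $\Pi_0$ consists precisely of the even simple roots of $\Sigma$ together with $\beta_1+\beta_2$, so these inequalities already yield $(\nu,\gamma)\ge 0$ for all $\gamma\in\Pi_0$; since the even simple roots of every member of $\mathcal{B}$ lie in $\Pi_0$, the even part of the dominance condition holds automatically for any candidate $\Sigma'$. Hence the problem reduces to finding $\Sigma'\in\mathcal{B}$ whose two odd simple roots both pair non-negatively with $\nu$.

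If $(\nu,\beta_1)=(\nu,\beta_2)=0$ then $\nu$ is already $\Sigma$-dominant and I take $\Sigma'=\Sigma$. Otherwise $(\nu,\beta_1+\beta_2)\in\mathbb{Z}_{>0}$, and together with the previous paragraph this upgrades the estimate to $(\nu,\gamma)\in\mathbb{Z}_{>0}$ for \emph{every} $\gamma\in\Pi_0$; this uniform strict positivity is the engine of the argument. In this case, if both $(\nu,\beta_i)\ge 0$ I again take $\Sigma'=\Sigma$; if not, then as $(\nu,\beta_1)+(\nu,\beta_2)>0$ exactly one of the two, say $(\nu,\beta_1)$, is negative and the other is positive.

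The plan is then to push the negative pairing up to zero by odd reflections. Suppose the current set $\Sigma_j\in\mathcal{B}$ has odd simple roots $\delta_1,\delta_2$ with $(\nu,\delta_1)<0<(\nu,\delta_2)$, and let $\gamma$ be the unique even simple root of $\Sigma_j$ that is not orthogonal to $\delta_1$; note $\gamma\in\Pi_0$. Applying the odd reflection rule of \ref{odd} at $\delta_1$, the set $\Sigma_{j+1}:=r_{\delta_1}\Sigma_j\in\mathcal{B}$ has odd simple roots $-\delta_1$ and $\gamma+\delta_1$ (the remaining non-orthogonal contribution $\delta_1+\delta_2$ being even). Here $(\nu,-\delta_1)=-(\nu,\delta_1)>0$, while $(\nu,\gamma+\delta_1)=(\nu,\delta_1)+(\nu,\gamma)$ with $(\nu,\gamma)\in\mathbb{Z}_{>0}$. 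Thus after the reflection at most one odd simple root still pairs negatively with $\nu$, and if one does its pairing has increased by the positive integer $(\nu,\gamma)\ge 1$. Iterating, the unique negative odd pairing is a real number that grows by at least $1$ at every step while staying $<0$, so the process terminates; the set $\Sigma'$ reached when no negative odd pairing remains has both odd simple roots, and (being in $\mathcal{B}$) all its even simple roots, pairing non-negatively with $\nu$, which is what we want.

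The only real obstacle is the termination of this reflection process, and the sketch isolates exactly what makes it work: the strict inequality $(\nu,\gamma)>0$ for all $\gamma\in\Pi_0$, available precisely in the non-degenerate case $(\nu,\beta_1+\beta_2)>0$. The degenerate case $(\nu,\beta_1+\beta_2)=0$ forces $(\nu,\beta_1)=(\nu,\beta_2)=0$ and needs no reflection, so the two cases dovetail. The remaining routine verifications are the translation of integrability into the pairing inequalities and the bookkeeping that each reflection indeed replaces $\{\delta_1,\delta_2\}$ by $\{-\delta_1,\gamma+\delta_1\}$ with $\gamma\in\Pi_0$.
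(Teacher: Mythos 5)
Your proof is correct. The core mechanism is the same as the paper's: walk through $\mathcal B$ by odd reflections applied at the odd simple root pairing negatively with $\nu=\lambda+\rho_\Sigma$, using the fact that the even simple roots of every member of $\mathcal B$ lie in $\Pi_0$ and hence pair non-negatively with $\nu$. Where you differ is in how termination is secured. The paper first replaces $\{\alpha_1,\alpha_2\}$ by $\{\alpha_1+s\delta,\alpha_2-s\delta\}$ so as to normalize $0\le(\nu,\alpha_1)<k+n-1$; this forces the subsequent chain $r_{\beta_s}\cdots r_{\beta_2}$ (with $\beta_r=\alpha_2+\cdots+\alpha_r$) to terminate within one trip around the cycle, because $(\nu,\beta_{n+1})=(k+n-1)-(\nu,\alpha_1)>0$. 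You skip that normalization entirely and instead argue by monotonicity: each reflection raises the unique negative odd pairing by $(\nu,\gamma)$ for some $\gamma\in\Pi_0$, and in the non-degenerate case $(\nu,\beta_1+\beta_2)>0$ every such pairing is a positive integer, so the negative pairing cannot stay below zero indefinitely. Both arguments are sound; yours trades the paper's explicit a priori bound on the length of the reflection chain for a normalization-free iteration, and it correctly isolates both the role of the reality hypothesis on $(\lambda,\alpha)$ and the degenerate case $(\nu,\beta_1)=(\nu,\beta_2)=0$, where no reflection is needed.
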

\begin{proof}
Recall that $(\lambda+\rho_{\Sigma},\delta)=k+n-1$.
Note that $(\Sigma\setminus\{\alpha_1,\alpha_2\})\cup\{ \alpha_1+s\delta,\alpha_2-s\delta\}$
is a set of simple roots for any $s\in\mathbb{Z}$. Therefore without loss of generality we may assume that
\begin{equation}\label{beta1k}
0\leq (\lambda+\rho_\Sigma,\alpha_1)<k+n-1.
\end{equation}
If $0\leq (\lambda+\rho_\Sigma,\alpha_2)$, we take
$\Sigma'=\Sigma$.
Assume that $(\lambda+\rho_\Sigma,\alpha_2)<0$. For $r=2,\ldots,n+1$ set
 $\beta_r:=\sum_{i=2}^{r}\alpha_i$ (where $\alpha_{n+1}:=\alpha_0$).
Then $\delta=\beta_{n+1}+\alpha_1$, so ~(\ref{beta1k}) gives
$$(\lambda+\rho_\Sigma,\beta_2)<0,\ \ (\lambda+\rho_\Sigma,\beta_{n+1})>0.$$
Let $s$ be maximal such that $(\lambda+\rho_\Sigma,\beta_s)<0$.
For $\Sigma':=r_{\beta_s}\ldots r_{\beta_2} \Sigma$ the isotropic roots are
$-\beta_s$ and $\beta_{s+1}$. Since  $(\lambda+\rho_\Sigma,-\beta_s), (\lambda+\rho_\Sigma,\beta_{s+1})\geq 0$,
$\Sigma'$ is as required.
\end{proof}

\subsubsection{Definitions}
From now on $L$ stands for an {\em irreducible integrable highest weight module
on non-zero level}.

Recall that $L$ is called {\em typical}
if $(\rho wt_{\Sigma} L,\alpha)\not=0$ for any (isotropic) odd root
$\alpha$ and {\em atypical} otherwise. From~(\ref{lambdalambda'}), it follows that this notion does not depend on the choice
of $\Sigma$ and, moreover,
  $\rho wt_{\Sigma} L$ does not depend on $\Sigma$ for typical $L$.

We say that $L$ is {\em $\Sigma$-tame}
if $(\rho wt_{\Sigma} L,\beta)=0$ for some odd $\beta\in\Sigma$.
Any atypical $L$ (for $\fsl(1,n)^{(1)}$) is tame with respect to some $\Sigma$.

Let $\beta$ be an odd root. We call an odd reflection $r_{\beta}$  {\em $L$-typical} if
for $\Sigma$ containing $\beta$ one has
$(\rho wt_{\Sigma} L,\beta)\not=0$ (by~\ref{uniqueSigma}, $\Sigma$ is unique for given
$\beta$).
Note that if $\Sigma$ and $\Sigma'$ are connected by a chain of odd
$L$-typical reflections, then
$\rho wt_{\Sigma}(L)=\rho wt_{\Sigma'}(L)$.

We say that  $\lambda\in\fh^*$ is {\em integral} if $(\lambda,\alpha)$ is integral
for each $\alpha\in\Pi_0$.
We say that $\lambda\in\fh^*$ is {\em regular} if $(\lambda,\alpha)\not=0$
for any even real root and that $\lambda$ is {\em singular} otherwise.
Note that if $\lambda$ integral, then there exists a unique $\bar{\lambda}\in W\lambda$
such that $(\bar{\lambda},\alpha)\in\mathbb N$ for all $\alpha\in \Pi_0$. Obviously $\bar{\lambda}$ is regular if and only if $\lambda$ is regular.

We say that $L$ is {\em $\Sigma$-regular} if $\rho wt_{\Sigma} L$ is regular  and
that $L$ is {\em regular} if it is $\Sigma$-regular for each $\Sigma$. We say that $L$ is {\em $\Sigma$-singular} if it is
not $\Sigma$-regular and that $L$ is {\em singular} if it is not regular.
By~\ref{corLambdak}, $L$ is $\Sigma$-singular if and only if $(\rho wt L,\alpha)=0$
for both odd roots $\alpha\in\Sigma$ (in particular, in this case $L$ is $\Sigma$-tame).

\subsubsection{}
\begin{lem}{hw} Let $L$ be a simple integrable module and $HW(L)$ denote the set of all $\rho wt_{\Sigma} L$ for all $\Sigma\in\mathcal B$.
\begin{enumerate}
\item If $L$ is typical then $|HW(L)|=1$;
\item If $L$ is regular atypical then $|HW(L)|=2$;
\item Let $L$ be singular atypical and $\Sigma=\{\alpha_0,\dots,\alpha_{n}\}$ with odd $\alpha_0,\alpha_1$ be such that $\rho wt_{\Sigma} L$ is singular.
Let $m,l$ be such that
$$(\rho wt_\Sigma  L,\alpha_m)\geq 2,\,(\rho wt_\Sigma L,\alpha_{m+1})=\dots=(\rho wt_\Sigma L,\alpha_{n})=1, \quad $$
and
$$(\rho wt_\Sigma L,\alpha_l)\geq 2,\,(\rho wt_\Sigma L,\alpha_{2})=\dots=(\rho wt_\Sigma L,\alpha_{l-1})=1.$$
Then
$$\begin{array}{ll}
HW(L)=&\{\rho wt_{\Sigma}L, \rho wt_{\Sigma}L+\alpha_1,\rho wt_{\Sigma}L+2\alpha_1+
\alpha_2,\dots,\\
& \rho wt_{\Sigma}L+(l-1)\alpha_1+(l-2)\alpha_2+\cdots+\alpha_{l-1}\}
\cup\{\rho wt_{\Sigma}L+\alpha_0,\dots,\\
&\rho wt_{\Sigma}L+(n-m+1)\alpha_0+(n-m)\alpha_n+\dots+\alpha_{m+1}
\}\end{array}$$
or equivalently
$$HW(L)=\{\overline{\rho wt_{\Sigma}L+s\alpha_0}\,|\,s=1-l,\dots,n-m+1\}.$$
\end{enumerate}
\end{lem}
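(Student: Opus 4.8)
The plan is to compute $HW(L)$ directly from the transformation rule~(\ref{lambdalambda'}) together with the fact that $\mathcal{B}$ is a bi-infinite adjacency string in which any two vertices are joined by a unique proper chain of odd reflections. Fixing a base $\Sigma=\{\alpha_0,\dots,\alpha_n\}$ with $\alpha_0,\alpha_1$ the two odd simple roots ($(\alpha_0,\alpha_1)=1$), the two rays of the string issuing from $\Sigma$ are obtained by repeatedly reflecting in odd roots, and I view $\Sigma'\mapsto\rho wt_{\Sigma'}L$ as a walk along each ray. By~(\ref{lambdalambda'}) this walk is constant across an $L$-typical reflection and jumps by the (isotropic) reflected odd root across an $L$-atypical one, so $HW(L)$ is exactly the set of values taken by the walk, and everything reduces to locating the jumps. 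Part (i) is then immediate: if $L$ is typical no reflection is $L$-atypical, the walk is globally constant, and $|HW(L)|=1$.

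To handle the jumps I first describe a ray. Reflecting $\Sigma$ in $\alpha_1$ turns the even neighbour $\alpha_2$ into the odd isotropic root $\alpha_1+\alpha_2$, and iterating shows the odd roots successively reflected along the $\alpha_1$-ray are $\gamma_j:=\alpha_1+\dots+\alpha_j$, with $\gamma_{j+1}=\gamma_j+\alpha_{j+1}$; these lie in $\Delta^+(\Sigma)$, so by~\ref{odd} this is the proper chain. Writing $a_i:=(\rho wt_{\Sigma}L,\alpha_i)$ and using $(\gamma_i,\gamma_j)=-1$ for $i\neq j$ (each $\gamma_i$ is isotropic and only $\alpha_i$ is adjacent to $\alpha_{i+1}$), a short computation gives that, if the first $j-1$ steps of the ray were jumps, the pairing governing the $j$-th step equals $(a_2+\dots+a_j)-(j-1)$. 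Since $a_1=0$ whenever $L$ is $\Sigma$-tame and $a_i\geq 1$ for the even indices by~\ref{corLambdak}, a jump occurs at $\gamma_j$ exactly while $a_2=\dots=a_j=1$; the jumps thus persist through the initial run of $1$'s and stop at the first index $l$ with $a_l\geq 2$, after which the formula, with the number of jumps frozen at $l-1$, stays $\geq 1$ and the walk is typical forever. The mirror statement holds for the $\alpha_0$-ray, with reflected roots $\alpha_0,\ \alpha_0+\alpha_n,\ \alpha_0+\alpha_n+\alpha_{n-1},\dots$ and the run of $1$'s ending at $\alpha_m$.

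For (ii) I choose $\Sigma$ with $L$ tame; regularity forbids $\Sigma$ singular, so exactly one odd root, say $\alpha_1$, satisfies $a_1=0$, and $a_0=(\rho wt_{\Sigma}L,\alpha_0+\alpha_1)\in\ZZ_{>0}$ by integrability. The $\alpha_0$-ray then never jumps (its pairings $a_0,\ a_0+a_n,\dots$ are all positive), while on the $\alpha_1$-ray the forced jump at $\alpha_1$ is followed by a typical step: $r_{\alpha_1}\Sigma$ is again tame but, by regularity, not singular, which forces $a_2\geq 2$, and the stabilisation above then gives no further jumps. Hence the walk takes only the values $\rho wt_{\Sigma}L$ and $\rho wt_{\Sigma}L+\alpha_1$, so $|HW(L)|=2$. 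For (iii) I start at a singular $\Sigma$, so $a_0=a_1=0$ and both rays begin with a jump; the $\alpha_1$-ray yields $\rho wt_{\Sigma}L,\ \rho wt_{\Sigma}L+\gamma_1,\ \rho wt_{\Sigma}L+\gamma_1+\gamma_2,\dots$ up to the jump at $\gamma_{l-1}$, and since $\gamma_1+\dots+\gamma_j=j\alpha_1+(j-1)\alpha_2+\dots+\alpha_j$ this is precisely the first displayed family; the $\alpha_0$-ray symmetrically gives the second. As the two families share only $\rho wt_{\Sigma}L$ and otherwise involve disjoint sets of simple roots, all these weights are distinct and $|HW(L)|=(l-1)+(n-m+1)+1=n-m+l+1$.

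I expect the genuine work to lie in two places. First is the stabilisation/finiteness claim — that on each ray the jumps are confined to the initial run of $1$'s — which is what makes $HW(L)$ finite and equal to the displayed set rather than something larger; it rests on the monotonicity of $(a_2+\dots+a_j)-(\#\,\text{jumps})$, guaranteed because $\sum_{i\geq 2}a_i=k+n-1>n-1$ forces some $a_i\geq 2$, so the runs of $1$'s are finite. Second is the reconciliation with the ``equivalently'' formula: one must identify each walk value with a dominant representative $\overline{\rho wt_{\Sigma}L+s\alpha_0}$, matching $s=1,\dots,n-m+1$ with the $\alpha_0$-ray and $s=0,-1,\dots,1-l$ with the $\alpha_1$-ray. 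Concretely this amounts to the identities $\rho wt_{\Sigma}L+j\alpha_1+(j-1)\alpha_2+\dots+\alpha_j=\overline{\rho wt_{\Sigma}L-j\alpha_0}$, each verified by an explicit reflection in the even root $\alpha_0+\alpha_1$ (and its $\delta$-translates, using $\delta=\sum_i\alpha_i$). This Weyl-group reindexing, rather than the walk analysis itself, is the fussiest step.
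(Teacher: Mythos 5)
Your proposal is correct and follows essentially the same route as the paper: both track $\rho wt_{\Sigma'}L$ along the proper chains of odd reflections issuing from a tame (resp.\ singular) $\Sigma$, use~(\ref{lambdalambda'}) to see that the weight jumps exactly at the $L$-atypical reflections, identify those reflections as the ones indexed by the initial runs of marks equal to $1$, and pass to the dominant representatives via the even reflection in $\alpha_0+\alpha_1$. Your explicit pairing formula $(a_2+\cdots+a_j)-(j-1)$ merely spells out the step the paper asserts when it declares which reflections $r_{\beta_i}$, $r_{\gamma_i}$ are atypical and that ``all other odd reflections are $L$-typical.''
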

\begin{rem}{}
The existence of $l,m$ follows from the assumption that $k\not=0$; one has
 $2\leq l\leq m\leq n$.
\end{rem}
\begin{proof} The first assertion follows from (\ref{lambdalambda'}) as any odd reflection is $L$ typical.
Now assume that $L$ is atypical and regular. Then there exists exactly one positive odd root such that $(\rho wt_{\Sigma} L,\alpha)=0$.
Let $\Sigma'\in\mathcal B$ be such that $\alpha\in\Sigma'$. Let $\Sigma''=r_{\alpha}\Sigma'$. Since both $\rho wt_{\Sigma'} L$ and $\rho wt_{\Sigma''} L$ are regular,
all other odd reflections are $L$-typical. Hence $HW(L)=\{\rho wt_{\Sigma'} L,\rho wt_{\Sigma''} L\}$.

Finally let assume that $L$ is atypical and singular.
Then by Lemma~\ref{integrable} there exists  $\Sigma_0$ such that
$(\rho wt_{\Sigma} L,\alpha_1)=(\rho wt_{\Sigma} L,\alpha_2)=0$ for both simple odd roots $\alpha_1,\alpha_2\in\Sigma_0$.
Moreover, $m$ and $l$ exist as follows from integrability condition.
If we set
$$\beta_1=\alpha_0+\alpha_n,\dots,\beta_{n-m}=\alpha_0+\alpha_n+\dots+\alpha_{n-m+1},$$
then the reflection $r_{\beta_i}$ are all $L$-atypical and we obtain that
$HW(L)$ contains $\rho wt_{r_{\alpha_0}\Sigma} L$ and
$\rho wt_{r_{\beta_i}\dots r_{\beta_1}r_{\alpha_0}\Sigma}L$ for all $i=1,\dots, n-m$.
Similarly if we set
$$\gamma_1=\alpha_1+\alpha_2,\dots,\gamma_{l-2}=\alpha_1+\alpha_2+\dots+\alpha_{l-1},$$
then $HW(L)$ contains $\rho wt_{r_{\alpha_1}\Sigma} L$ and $\rho wt_{r_{\gamma_i}\dots r_{\gamma_1}r_{\alpha_1}\Sigma}L$ for all $i=1,\dots, l-2$.
All other odd reflections are $L$ typical and do not add new weights to $HW(L)$.

The last formula follows form the identity
$$r_{\alpha_j}\ldots r_{\alpha_2}r_{\alpha_1+\alpha_0}(\rho wt_{\Sigma}L-j\alpha_0)=
r_{\alpha_j}\ldots r_{\alpha_2}(\rho wt_{\Sigma}L+j\alpha_1)=
\rho wt_{\Sigma}L+j\alpha_1+(j-1)\alpha_2+\dots+\alpha_j.$$
\end{proof}

\subsubsection{}
One readily sees that $\rho wt_{\Sigma}L+j\alpha_0$ is not regular for
$1-l<j<n-m+1$.

\begin{cor}{corhw} If $L$ is atypical, then $HW(L)$ contains exactly two regular weights.
\end{cor}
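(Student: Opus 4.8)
The plan is to read the corollary off \Lem{hw}, which already computes $HW(L)$ completely. Since the hypothesis is that $L$ is atypical, only parts (ii) and (iii) are relevant. If $L$ is regular atypical, then \Lem{hw}(ii) gives $|HW(L)|=2$, and by the definition of regularity (``$L$ regular'' means $\rho wt_{\Sigma}L$ is regular for every $\Sigma$) both of these weights are regular; so $HW(L)$ has exactly two regular weights and nothing further is needed. All the content therefore sits in the singular atypical case.

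So I would assume $L$ is singular atypical and adopt the notation of \Lem{hw}(iii): $\Sigma=\{\alpha_0,\dots,\alpha_n\}$ with $\alpha_0,\alpha_1$ odd, and the integers $l,m$ with $2\le l\le m\le n$. By \Lem{hw}(iii) one has $HW(L)=\{\overline{\rho wt_{\Sigma}L+s\alpha_0}\mid s=1-l,\dots,n-m+1\}$. The observation stated immediately before the corollary gives that $\rho wt_{\Sigma}L+s\alpha_0$ is singular for every interior $1-l<s<n-m+1$; as regularity is $W$-invariant, the representative $\overline{\rho wt_{\Sigma}L+s\alpha_0}$ is singular as well. Hence each interior $s$ contributes a singular weight, and the only candidates for regular weights are the two endpoints $s=1-l$ and $s=n-m+1$. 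It remains to show both endpoints are genuinely regular and that they are distinct.

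For the endpoint $s=n-m+1$ I would work with $\Sigma'=r_{\beta_{n-m}}\cdots r_{\beta_1}r_{\alpha_0}\Sigma$, the set of simple roots reached at the end of the $\alpha_0$-chain, so that $\rho wt_{\Sigma'}L=\overline{\rho wt_{\Sigma}L+(n-m+1)\alpha_0}$. Tracking the odd nodes along the reflection chain (each odd reflection advances the odd node one step around the $(n+1)$-cycle), the odd simple roots of $\Sigma'$ are $-\beta_{n-m}$ and $\beta_{n-m+1}:=\alpha_0+\alpha_n+\dots+\alpha_m$. The key point is that $r_{\beta_{n-m+1}}$ is $L$-typical: this is exactly why the chain in the proof of \Lem{hw} terminates at $\beta_{n-m}$ (the jump $(\rho wt_{\Sigma}L,\alpha_m)\ge 2$ makes the next reflection typical, and the proof of \Lem{hw} records that all reflections beyond the listed ones are $L$-typical). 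By~\ref{corLambdak} (equivalently \Lem{integrable}), a set of simple roots is $L$-singular precisely when both its odd simple roots are $L$-atypical; since $\Sigma'$ has the $L$-typical odd simple root $\beta_{n-m+1}$, the weight $\rho wt_{\Sigma'}L$ is regular. The symmetric argument along the $\alpha_1$-chain handles the endpoint $s=1-l$. Distinctness is then immediate from the explicit list in \Lem{hw}(iii): the two endpoint weights are $\rho wt_{\Sigma}L+(l-1)\alpha_1+\dots+\alpha_{l-1}$ and $\rho wt_{\Sigma}L+(n-m+1)\alpha_0+\dots+\alpha_{m+1}$, whose coefficients of $\alpha_1$ are $l-1\ge 1$ and $0$, so they cannot coincide.

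The main obstacle is the regularity of the two endpoints, and I expect the cleanest route is the reformulation used above, namely ``regular $\Leftrightarrow$ some odd simple root is $L$-typical'' coming from \Lem{integrable}, rather than checking directly that no even real root is orthogonal to $\rho wt_{\Sigma}L+(n-m+1)\alpha_0$. The only point demanding care is the bookkeeping that confirms $\beta_{n-m+1}$ is an odd simple root of $\Sigma'$ and that $r_{\beta_{n-m+1}}$ is $L$-typical; both are already contained in the proof of \Lem{hw}, so no essentially new computation is required.
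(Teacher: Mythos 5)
Your proof is correct and follows the same route the paper takes (the paper gives no explicit proof beyond the remark preceding the corollary that the interior weights $\rho wt_{\Sigma}L+j\alpha_0$, $1-l<j<n-m+1$, are singular, together with the list in Lemma~\ref{hw}(iii)). You additionally spell out why the two endpoint weights are regular via the characterization of $\Sigma$-singularity through the odd simple roots, which is exactly the detail the paper leaves to the reader.
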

\subsection{Character formulae}\label{chformula}
If $L(\lambda)$
is typical, then $\ch L(\lambda)$ is given by the Kac-Weyl character formula;
if $L(\lambda)$ is atypical and $\Sigma$-tame,  $\ch L(\lambda)$ is given by Kac-Wakimoto formula, see~\cite{S2},\cite{KW2}.

\subsection{Adjacency relation on atypical simple modules}
Note that if $L$ is atypical, then all weights of $L$ are integral. We say that $L'$ is adjacent to $L$ if there exist $\Sigma\in\mathcal B$ and an odd
$\alpha\in\Sigma$ such that $(\rho wt_\Sigma L,\alpha)=0$ and $\rho wt_\Sigma L'=\rho wt_\Sigma L-\alpha$. Note the if $L'$ is adjacent to $L$, then $L$ is adjacent to
$L'$, as for $\Sigma'=r_{\alpha}\Sigma$ we have  $\rho wt_{\Sigma'} L=\rho wt_{\Sigma'} L'-(-\alpha)$ and $-\alpha\in\Sigma'$.
Therefore the adjacency relation defines the adjacency graph with vertices enumerated by isomorphism classes of atypical integrable simple $\fg$-modules with a fixed central charge $k$ and a fixed eigenvalue of the Casimir operator
(i.e., the value $(\rho wt_{\Sigma}L,\rho wt_{\Sigma}L)$ is fixed).

We denote this graph by $\Gamma$. It is important to characterize the connected components of $\Gamma$.

\subsubsection{}
\begin{lem}{onlyreg} Let $\Sigma\in\mathcal B$ and $\alpha$ be an odd root of $\Sigma$ such that $(\rho wt_\Sigma L,\alpha)=0$. Then
$\rho wt_\Sigma L-\alpha$ is integrable if and only if $\rho wt_\Sigma L$ is regular.
\end{lem}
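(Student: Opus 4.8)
The plan is to reduce the statement to the explicit numerical integrability conditions recorded in~\ref{corLambdak} and to verify it by computing the pairings of $\nu:=\rho wt_{\Sigma}L-\alpha$ with the simple roots of $\Sigma$. Write $\mu:=\rho wt_{\Sigma}L$, and let $\beta_1:=\alpha$ and $\beta_2$ be the two odd roots of $\Sigma$, so that $(\beta_1,\beta_2)=1$ and, by hypothesis, $a_1:=(\mu,\beta_1)=0$. Put $a_2:=(\mu,\beta_2)$. Since $L$ is integrable, $(\mu,\gamma)\in\mathbb{Z}_{>0}$ for every even $\gamma\in\Sigma$, and because $a_1=0$, condition (ii) of~\ref{corLambdak} forces $a_2\in\mathbb{Z}_{\geq 0}$. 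The whole lemma will come down to comparing two statements in the single variable $a_2$: regularity of $\mu$, and integrability of $\nu$.

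First I would restate the regularity criterion in these terms. By~\Lem{integrable}, an integrable $\mu$ is singular exactly when some positive even real root is orthogonal to it, and in that case the only such root is $\beta_1+\beta_2$, with $(\mu,\beta_1)=(\mu,\beta_2)=0$; conversely $a_1=a_2=0$ gives $(\mu,\beta_1+\beta_2)=0$, so $\mu$ is singular. Hence, under the standing hypothesis $a_1=0$, the weight $\mu$ is regular if and only if $a_2>0$.

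Next I would compute the integrability conditions for $\nu=\mu-\beta_1$ directly. For every even simple root $\gamma$ the base property $2(\gamma,\beta_1)/(\gamma,\gamma)\leq 0$ together with $(\gamma,\gamma)=2$ gives $(\beta_1,\gamma)\leq 0$, so $(\nu,\gamma)=(\mu,\gamma)-(\beta_1,\gamma)\geq(\mu,\gamma)\geq 1$; thus every even condition (i) of~\ref{corLambdak} holds automatically for $\nu$. For the odd roots, isotropy of $\beta_1$ gives $(\nu,\beta_1)=a_1=0$, while $(\nu,\beta_2)=a_2-(\beta_1,\beta_2)=a_2-1$, so $(\nu,\beta_1+\beta_2)=a_2-1$; and the central charge is unchanged, $(\nu,\delta)=(\mu,\delta)$, since $\delta$ is orthogonal to every root, so (iii) holds as well. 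Therefore the only non-automatic requirement is the odd condition (ii), namely $(\nu,\beta_1+\beta_2)\in\mathbb{Z}_{>0}$ or $(\nu,\beta_1)=(\nu,\beta_2)=0$; since $(\nu,\beta_1)=0$ this reads $a_2-1>0$ or $a_2-1=0$, i.e. $a_2\geq 1$. Combining this with the previous paragraph, $\nu$ is integrable if and only if $a_2>0$, i.e. if and only if $\mu$ is regular.

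The computation itself is routine; the step needing genuine care is the regularity criterion, where one must invoke~\Lem{integrable} to know that for an integrable weight singularity is detected by the single even root $\beta_1+\beta_2$ alone, so that singularity is equivalent to $a_2=0$. The only other point to watch is that subtracting the isotropic simple root $\beta_1$ can never spoil an even-root inequality, which holds because it changes the pairing with an even simple root by the non-negative amount $-(\beta_1,\gamma)\geq 0$.
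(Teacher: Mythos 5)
Your proof is correct and follows essentially the same route as the paper's: the even simple-root conditions for $\rho wt_\Sigma L-\alpha$ hold automatically because $(-\alpha,\gamma)\geq 0$, and the odd condition reduces to $(\rho wt_\Sigma L,\beta_2)\geq 1$, which by Lemma~\ref{integrable} is exactly regularity. You are somewhat more careful than the paper in separating the two cases of condition (ii) of~\ref{corLambdak} and in spelling out why singularity is equivalent to $a_2=0$, but the argument is the same.
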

\begin{proof} If $\beta\in\Sigma$ is an even root, then $(-\alpha,\beta)\geq 0$ and hence $(\lambda-\alpha,\beta)\in\mathbb Z_{>0}$.
If $\beta\in\Sigma$ is the second odd root, then $(\alpha+\beta,\alpha)=1$ and therefore $(\lambda-\alpha,\beta+\alpha)\in\mathbb Z_{\geq 0}$ if and only
if $(\lambda,\beta)\geq 1$. Hence $\lambda$ is regular.
\end{proof}

\subsubsection{}
\begin{cor}{onlytwo} An atypical simple integrable module $L$ has exactly two adjacent $L'$ and $L''$.
To construct them recall that by Lemma~\ref{hw} there exist exactly two $\Sigma_1$ and $\Sigma_2$ in $\mathcal B$ such that $L$ is $\Sigma_i$-tame and
$\rho wt_{\Sigma_i}L$ is regular.
Then $\rho wt_{\Sigma_1}L'=\rho wt_{\Sigma_1}-\alpha_1$ and $\rho wt_{\Sigma_2}L''=\rho wt_{\Sigma_2}-\alpha_2$ where $\alpha_i$ is the unique odd root in $\Sigma_i$
such that
$(\rho wt_{\Sigma_i} L,\alpha_i)=0$.
\end{cor}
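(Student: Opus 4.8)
The plan is to unwind the definition of adjacency and count the pairs that produce an integrable neighbour. By definition, a simple module $L'$ is adjacent to $L$ precisely when there is a pair $(\Sigma,\alpha)$ with $\Sigma\in\mathcal B$, $\alpha$ an odd root of $\Sigma$, $(\rho wt_{\Sigma}L,\alpha)=0$, and $\rho wt_{\Sigma}L'=\rho wt_{\Sigma}L-\alpha$; so the neighbours of $L$ are exactly the integrable modules of the form $\rho wt_{\Sigma}L-\alpha$. First I would invoke Lemma~\ref{onlyreg}: the weight $\rho wt_{\Sigma}L-\alpha$ is integrable if and only if $\rho wt_{\Sigma}L$ is regular. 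Hence only those pairs $(\Sigma,\alpha)$ for which $L$ is $\Sigma$-tame \emph{and} $\rho wt_{\Sigma}L$ is regular contribute a neighbour, and the problem reduces to enumerating such admissible pairs.

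Next I would count the admissible pairs using Lemma~\ref{hw} and Corollary~\ref{corhw}. By Corollary~\ref{corhw} the set $HW(L)$ contains exactly two regular weights, say $\mu_1$ and $\mu_2$; these are the two extreme weights of the chain described in Lemma~\ref{hw}(iii) (in the regular-atypical case they are simply the two elements of $HW(L)$, both of which are regular). The key point is that each $\mu_i$ is realised by a \emph{unique} tame $\Sigma_i$, together with a unique odd $\alpha_i\in\Sigma_i$ satisfying $(\mu_i,\alpha_i)=0$. I would establish this as follows: because $\mu_i$ is regular atypical, a direct computation in the $\varepsilon$-coordinates shows there is a single isotropic odd root up to sign, say $\pm\gamma$, with $(\mu_i,\pm\gamma)=0$; by~\ref{uniqueSigma} the roots $\gamma$ and $-\gamma$ each lie in a unique set of simple roots, and formula~(\ref{lambdalambda'}) shows that exactly one of these two sets $\Sigma$ has $\rho wt_{\Sigma}L=\mu_i$, since reflecting at $\gamma$ is $L$-atypical and changes the weight by $\pm\gamma$. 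This pins down $\Sigma_i$ and $\alpha_i$ uniquely and yields the ``exactly two $\Sigma_i$'' assertion. Thus there are exactly two admissible pairs $(\Sigma_1,\alpha_1)$ and $(\Sigma_2,\alpha_2)$, producing the modules $L',L''$ determined by $\rho wt_{\Sigma_1}L'=\mu_1-\alpha_1$ and $\rho wt_{\Sigma_2}L''=\mu_2-\alpha_2$.

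Finally I would check that $L'\neq L''$, which is the crux of the statement (it is what makes the degree of $L$ in $\Gamma$ equal to two rather than one). Transporting both highest weights to a common set of simple roots and comparing, $L'$ sits on the ``$\mu_1$-side'' of the chain $HW(L)$ and $L''$ on the ``$\mu_2$-side'', so their highest weights differ while their Casimir eigenvalues agree; concretely, in the regular-atypical case one has $\mu_2=\mu_1+\alpha_1$ and $\alpha_2=-\alpha_1$, whence $\rho wt_{\Sigma_1}L'=\mu_1-\alpha_1$, while transporting $L''$ back to $\Sigma_1$ through the $L$-atypical reflection $r_{\alpha_2}$ gives $\mu_1+\alpha_1\neq\mu_1-\alpha_1$; the singular case is analogous, using that the two regular extremes lie on opposite ends of the chain. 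The main obstacle I anticipate is precisely this bookkeeping in the second and third steps: pinning down that each regular weight yields a single admissible pair (uniqueness of $\Sigma_i$ and $\alpha_i$) and that the two resulting neighbours are genuinely distinct. Once Lemma~\ref{onlyreg}, Lemma~\ref{hw} and Corollary~\ref{corhw} are in hand, the remainder is a careful tracking of weights along the string $\mathcal B$ under odd reflections.
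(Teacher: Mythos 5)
Your proposal is correct and follows essentially the same route as the paper, which presents this corollary as an immediate consequence of Lemma~\ref{onlyreg} (only regular tame weights yield integrable neighbours) together with the description of $HW(L)$ in Lemma~\ref{hw} and Corollary~\ref{corhw} (exactly two regular weights, each realised by a unique tame $\Sigma$ with a unique orthogonal odd simple root). Your additional verification that $L'\neq L''$ is consistent with what the paper records separately in Remark~\ref{order}.
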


\subsubsection{}
\begin{rem}{order} Let us fix $\Sigma\in\mathcal B$, it follows from the above corollary that
$$\rho wt_\Sigma L'<_\Sigma\rho wt_\Sigma L<_\Sigma\rho wt_\Sigma L''$$ if $\rho wt_\Sigma L$ is regular. If  $\rho wt_\Sigma L$ is singular, we have
$$\rho wt_\Sigma L<_\Sigma\rho wt_\Sigma L'\,\,\text{and}\,\,\rho wt_\Sigma L<_\Sigma\rho wt_\Sigma L''.$$
\end{rem}

\subsubsection{}
\begin{thm}{maingraph} Fix $\Sigma\in\mathcal B$. Then every connected component $\Gamma'$ of $\Gamma$ contains exactly one $L_0$ such that
$\lambda:=\rho wt_\Sigma L_0$ is singular. Let $\beta$ be any of two odd roots of $\Sigma$ and
$$S:=\{s\in\mathbb Z\,|\,\lambda+s\beta\,\,\text{is regular}\}.$$
Then $L'\in\Gamma'$ if and if $L_0\simeq L'$ or $\rho wt_\Sigma L'=\overline{\lambda+s\beta}$ for some $s\in S$. Enumerate elements of $S\cup\{0\}$ in increasing
order assuming $s_0=0$ and set $L_i:=L_{\Sigma}(\overline{\lambda+s_i\beta}-\rho_\Sigma)$. Then every $L_i$ is adjacent to $L_{i-1}$ and $L_{i+1}$.
\end{thm}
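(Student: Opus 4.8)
The plan is to read off each whole component from the local picture already established in \Cor{onlytwo} and \Rem{order}. By \Cor{onlytwo} every vertex of $\Gamma$ has exactly two neighbours, so each connected component is either a bi-infinite path or a cycle. By \Rem{order} a vertex $L$ is a local minimum for $\leq_\Sigma$ (both neighbours strictly above it) precisely when $\rho wt_\Sigma L$ is singular, while no vertex is a local maximum: at a $\Sigma$-regular $L$ exactly one neighbour lies strictly above and one strictly below. Thus the whole question reduces to locating the $\Sigma$-singular vertices and to identifying the weights along the two arms emanating from them.

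First I would introduce the height $\phi$ on $\fh^*$ determined by $\phi(\alpha)=1$ for all $\alpha\in\Sigma$ (the value on the complementary $\Lambda_0$-direction is irrelevant, as all weights in a fixed component share the same level). An up-move in $\leq_\Sigma$ adds a nonzero element of $\mathbb{Z}_{\geq 0}\Sigma$, hence strictly increases $\phi$. The key point is that $\phi$ is bounded below — indeed proper — on the set of $\rho wt_\Sigma L$ for $L$ of fixed level $k$ and fixed Casimir value: affine dominance bounds the ``shape'' $(\rho wt_\Sigma L,\gamma)$, $\gamma\in\Pi_0$, in terms of $k$, while fixing the Casimir ties the remaining (degree) freedom to the $\fgl_n$-trace, so that $\phi$ grows quadratically in that trace. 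Granting this, a cycle is impossible (its $\phi$-maximum would be a local maximum), and on a bi-infinite path $\phi$ is proper and integer-valued on a discrete coset, hence attains a minimum; a minimum is forced to be $\Sigma$-singular, and two minima would enclose a local maximum, so each component has \emph{exactly one} $\Sigma$-singular vertex $L_0$. I also record that for any integrable $L$ the weight $\rho wt_\Sigma L$ is automatically $\Pi_0$-dominant (by~\ref{corLambdak} it pairs nonnegatively with every $\gamma\in\Pi_0=\{\text{even simples}\}\cup\{\beta_1+\beta_2\}$), so $\rho wt_\Sigma L=\overline{\rho wt_\Sigma L}$ and the weights appearing in the theorem are literally the highest weights.

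Next, set $\lambda=\rho wt_\Sigma L_0$; by \Lem{integrable} we have $(\lambda,\beta_1)=(\lambda,\beta_2)=0$ for the two odd roots $\beta_1,\beta_2\in\Sigma$. I would then study the $\beta$-string $\lambda+s\beta$, $s\in\mathbb{Z}$. Since $(\lambda,\beta)=(\beta,\beta)=0$ and $(\beta,\delta)=0$, every $\lambda+s\beta$ shares the level and the Casimir of the component; regularity of $\lambda+s\beta$ is $W$-invariant and holds exactly for $s\in S$, and for such $s$ the dominant representative $\overline{\lambda+s\beta}$ is an integrable, $\Sigma$-regular highest weight, while $s=0$ returns the singular $\lambda$. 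The even reflection $r_{\beta_1+\beta_2}$ fixes $\lambda$ and sends $\beta_1\mapsto-\beta_2$, whence $\overline{\lambda+s\beta_1}=\overline{\lambda-s\beta_2}$; this shows the description is independent of the chosen odd root $\beta$ (with $s\leftrightarrow-s$) and lets me treat a single arm.

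The decisive and most technical step, which I expect to be the main obstacle, is the adjacency statement. By \Cor{onlytwo} the two neighbours of a vertex $L$ are obtained from its two regular weights in $HW(L)$ — described explicitly by \Lem{hw} and \Cor{corhw} — followed by one further odd reflection. I would compute these neighbours for $L=L(\overline{\lambda+s\beta})$ and, transporting the resulting highest weights back to the fixed $\Sigma$ through a chain of odd reflections (the same kind of computation as in the proof of \Lem{hw}, governed by~(\ref{lambdalambda'})), identify them as $L(\overline{\lambda+s'\beta})$ with $s'$ the two elements of $S\cup\{0\}$ adjacent to $s$; in particular the neighbour relation skips the intermediate integers of $\mathbb{Z}\setminus(S\cup\{0\})$. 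The delicate part is the bookkeeping of dominant representatives along the string, which is precisely why \Lem{hw} is needed. Granting it, walking outward from $L_0$ matches the vertex at distance $i$ with $L(\overline{\lambda+s_i\beta})$, simultaneously exhibiting $\{L_0\}\cup\{L(\overline{\lambda+s\beta}):s\in S\}$ as a full connected component, proving that consecutive $L_i$ are adjacent, and — together with the uniqueness of the local minimum from the second paragraph — confirming that $L_0$ is its unique $\Sigma$-singular vertex.
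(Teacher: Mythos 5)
Your proposal is correct in substance and uses the same supporting results (\Cor{onlytwo}, \Rem{order}, \Lem{hw}), but the existence part of the argument is genuinely different from the paper's. The paper finds the singular vertex $L_0$ by a descent: among all $L$ in the component it minimizes the distance $d(\Sigma,\Sigma')$ to a set of simple roots $\Sigma'$ for which $L$ is tame, shows the minimizer is $\Sigma$-tame with $\Sigma'=\Sigma$, and then walks down the $\beta$-string $\mu-q\beta$ to the first singular weight; uniqueness is read off from \Rem{order} and \Cor{onlytwo} exactly as you do. You instead rule out cycles and produce the minimum globally, via a height function $\phi$ that is proper on the set of dominant weights with fixed level and Casimir eigenvalue. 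Your route is arguably cleaner conceptually (it makes explicit why the process terminates, which the paper's descent leaves implicit in the finiteness of $d$ and of the dominant part of the $\alpha$-string), at the cost of one analytic input: the properness of $\phi$ hinges on a sign, namely that fixing $\lVert\lambda\rVert^2$ forces the $\delta$-coefficient of $\overline{\lambda+s\beta}$ to grow to $+\infty$ (not $-\infty$) with the $\fgl_n$-trace. This works because $\lVert\varepsilon_1+\dots+\varepsilon_n\rVert^2=-n(n-1)<0$ in the chosen normalization, but you assert the quadratic growth without checking the sign; you should verify it, since with the opposite sign $\phi$ would be unbounded below and the argument would collapse. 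For the final assertion (the component is the $\beta$-string and consecutive $L_i$ are adjacent) you and the paper both reduce to the explicit description of $HW(L)$ in \Lem{hw}; the paper dispatches this in one sentence, and your plan to transport the two regular weights of $HW(L)$ back to $\Sigma$ and match them with the neighbouring elements of $S\cup\{0\}$ is the intended computation, though in both texts it remains a sketch. Your side observations (that $\rho wt_\Sigma L$ is automatically $\Pi_0$-dominant, and that $r_{\beta_1+\beta_2}$ interchanges the two choices of $\beta$ up to $s\mapsto -s$) are correct and worth keeping.
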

\begin{proof} Uniquness of $L_0$ follows from Remark \ref{order} and Corollary \ref{onlytwo}. Let us prove the existence. Start with some $L$ such
that $\rho wt_\Sigma L$ is regular. There exists
$\Sigma'$ such that $L$ is $\Sigma'$-tame. Let us pick up $L$ with minimal $d(\Sigma,\Sigma')$. We claim that for such $L$, $\Sigma=\Sigma'$. Indeed, assume
$\Sigma\neq\Sigma'$. By Lemma \ref{hw}, $\mu=\rho wt_{\Sigma}L=\rho wt_{\Sigma'}L$ is regular. There is a unique odd $\alpha\in\Sigma'$ for which
$(\mu,\alpha)=0$. Consider the smallest $p>0$ such that $\mu-p\alpha$ is not dominant. Then $\mu-(p-1)\alpha$ is singular. Let
$L'=L_{\Sigma'}(\mu-(p-1)\alpha-\rho_{\Sigma'})$. If $\alpha'$ is the second odd root of $\Sigma'$, then $d(\Sigma,r_{\alpha'}\Sigma)=d(\Sigma,\Sigma')-1$ and
this contradicts minimality of
$d(\Sigma,\Sigma')$. To finish the proof of existence of $L_0$ take odd $\beta\in\Sigma$ such that $(\mu,\beta)=0$ and consider
the smallest $q\geq 0$ such that $\mu-q\beta$ is singular.
Then $L_0$ is the simple module with $\rho wt_{\Sigma}L_0=\mu-q\beta$.

The last assertion of the theorem follows fom the description of $HW(L)$ given in Lemma \ref{hw}.
\end{proof}

\subsubsection{}
\begin{cor}{paramcon} For a fixed central charge $k$, the graph $\Gamma$ has finitely many connected components. They are enumerated by singular integrable weights
with central charge $k$.
\end{cor}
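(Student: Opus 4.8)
The plan is to read the corollary off \Thm{maingraph} and then reduce the finiteness statement to a simple combinatorial count.

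\Thm{maingraph} already does the conceptual work: it shows that each connected component $\Gamma'$ of $\Gamma$ contains a \emph{unique} module $L_0$ for which $\lambda:=\rho wt_\Sigma L_0$ is singular (with $\Sigma$ the set of simple roots fixed once and for all). First I would package this as a map $\Gamma'\mapsto \rho wt_\Sigma L_0$ from the connected components of $\Gamma$ to the singular integrable weights of central charge $k$ lying on $\Gamma$ (recall that $\Gamma$ carries a fixed value of the Casimir, i.e.\ of $(\rho wt_\Sigma L,\rho wt_\Sigma L)$). This map is injective because a simple module determines the component containing it, and it is surjective because any singular integrable simple module of the prescribed central charge and Casimir eigenvalue lies in some component, which by the theorem is sent back to its weight. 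Thus the connected components of $\Gamma$ are in bijection with the singular integrable weights of central charge $k$, which is the enumeration asserted.

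It then remains to see that this set of weights is finite. Writing $a_i:=(\lambda,\alpha_i)$ for the fixed $\Sigma=\{\alpha_i\}_{i=0}^n$ with $\alpha_1,\alpha_2$ odd, the fact that $\lambda$ is $\Sigma$-singular gives, by \ref{corLambdak}, that $a_1=a_2=0$, while integrability forces $a_0,a_3,\dots,a_n\in\mathbb{Z}_{>0}$ with $a_0+a_3+\dots+a_n=k+n-1$; there are only finitely many such tuples. By the remark following \ref{corLambdak}, the tuple $(a_i)$ determines $\lambda$ only up to a shift $\lambda\mapsto\lambda+s\delta$. However, since $(\delta,\delta)=0$ and $(\lambda,\delta)=k+n-1$, one has $(\lambda+s\delta,\lambda+s\delta)=(\lambda,\lambda)+2s(k+n-1)$, which is affine in $s$ with nonzero slope because $k>0$ (so $k+n-1\neq 0$). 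Hence the fixed Casimir value attached to $\Gamma$ singles out at most one shift, so each admissible tuple contributes at most one weight, and the set of singular integrable weights on $\Gamma$ is finite.

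Combining the bijection with the finiteness count yields the corollary. I expect the only substantive point to be the finiteness step: the bijection is a purely formal consequence of \Thm{maingraph}, whereas finiteness hinges on the observation that fixing the Casimir eigenvalue collapses the a priori infinite $\delta$-shift family $\{\lambda+s\delta\}_{s}$ to a single weight per tuple. This collapse uses precisely the positivity $k>0$, through the nonvanishing of the slope $k+n-1$.
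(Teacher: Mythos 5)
Your proof is correct and takes the route the paper intends (the corollary is stated without an explicit proof, as an immediate consequence of Theorem~\ref{maingraph}): the existence and uniqueness of the singular module in each component gives the bijection, and finiteness follows from the finitely many tuples $(a_0,a_3,\dots,a_n)$ of positive integers summing to $k+n-1$ together with the observation that the fixed Casimir value $(\rho wt_{\Sigma}L,\rho wt_{\Sigma}L)$ eliminates the $\delta$-shift ambiguity, using $k+n-1\neq 0$. Your write-up correctly supplies the details the paper leaves implicit.
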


\section{The category of integrable $sl(1|n)^{(1)}$-modules
 with positive central charge}
 In this section we will describe $\cF_k$ for $k>0$.

\subsection{Maximal integrable quotient of a Verma module}\label{maxint}
Let $\Sigma\in\mathcal B$. We denote by $M_{\Sigma}(\lambda)$
the Verma module with  highest weight $\lambda$ for the Borel subalgebra corresponding to
$\Sigma$. The Verma module $M_{\Sigma}(\lambda)$ has a unique simple quotient $L_\Sigma(\lambda)$. If  $L_\Sigma(\lambda)$ is integrable, then
we denote by  $V_\Sigma(\lambda)$ the maximal integrable quotient of $M_{\Sigma}(\lambda)$. Clearly we have a surjection $V_{\Sigma}(\lambda)\to L_\Sigma(\lambda)$.

In~\cite{S2} the following lemma is proved.

\subsubsection{}
\begin{lem}{integrablequotient} Let $L=L_\Sigma(\lambda-\rho_\Sigma)$ be an  integrable module.

(i) If $L$ is typical, then $V_\Sigma(\lambda-\rho_\Sigma)=L$.

(ii) If $L$ is atypical and $\lambda$ is singular, then  $V_\Sigma(\lambda-\rho_\Sigma)=L$.

(iii) If $\lambda$ is regular, then the character of $V_{\Sigma}(\lambda-\rho_\Sigma)$ is given by typical formula
$$\ch V_\Sigma(\lambda-\rho_\Sigma)=\sum_{w\in W}\operatorname{sgn}(w)\ch M_\Sigma(w(\lambda)-\rho_\Sigma).$$
Moreover if $(\lambda,\alpha)=0$ for some odd $\alpha\in\Sigma$, then $V_\Sigma(\lambda-\rho_\Sigma)$ has length two
and can be described by the following exact sequence
$$0\to L_\Sigma(\lambda-\alpha-\rho_\Sigma)\to V_\Sigma(\lambda-\rho_\Sigma)\to L_\Sigma(\lambda-\rho_\Sigma)\to 0.$$

(iv) For any $\Sigma$ and $\Sigma'$ in $\mathcal B$, such that $\lambda=\rho wt_\Sigma L=\rho wt_{\Sigma'} L$, we have
$V_\Sigma(\lambda-\rho_\Sigma)=V_{\Sigma'}(\lambda-\rho_{\Sigma'})$
\end{lem}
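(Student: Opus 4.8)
The plan is to deduce all four parts from two inputs: the behaviour of Verma modules and of their maximal integrable quotients under odd reflections, and the representation theory of the even affine algebra $\fsl_n^{(1)}$, whose positive system $\Delta^+_{\ol 0}$ is fixed once and for all. First I record the reductions. By \Lem{integrable} an integrable singular module is automatically atypical, i.e. typical $\Rightarrow$ regular; hence the typical case (i) is exactly the subcase of (iii) in which no odd $\alpha\in\Sigma$ satisfies $(\lambda,\alpha)=0$. There the ``moreover'' clause is vacuous, the alternating sum of (iii) collapses by the typical character formula recalled in~\S\ref{chformula} to $\ch L$, and since $V_\Sigma(\lambda-\rho_\Sigma)\twoheadrightarrow L$ we get $V_\Sigma(\lambda-\rho_\Sigma)=L$. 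So (i) follows from (iii). Part (iv) I would treat first and independently: if $\rho wt_\Sigma L=\rho wt_{\Sigma'}L=\lambda$, then by~(\ref{lambdalambda'}) the proper chain of odd reflections joining $\Sigma$ to $\Sigma'$ consists of $L$-typical reflections $r_\beta$, i.e. with $(\lambda,\beta)\neq 0$; for such $\beta$ the odd vector $f_\beta$ is non-nilpotent on the highest line and $f_\beta v$ generates $M_\Sigma(\lambda-\rho_\Sigma)$, giving a canonical isomorphism $M_\Sigma(\lambda-\rho_\Sigma)\cong M_{r_\beta\Sigma}(\lambda-\rho_{r_\beta\Sigma})$ of $\fg$-modules. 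As the maximal integrable quotient is defined purely through $\fsl_n^{(1)}$, whose positive system does not depend on the Borel, this isomorphism carries $V_\Sigma$ onto $V_{\Sigma'}$, and chaining the reflections gives (iv).

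The heart is the character identity in (iii), which I would prove by comparison with the even algebra. Using the PBW factorisation $U(\fg)=U(\fg_{\ol 0})\otimes\Lambda(\fg_{\ol 1})$ one has $\ch M_\Sigma(\mu)=\bigl(\prod_{\beta\in\Delta^+_{\ol 1}}(1+e^{-\beta})\bigr)\,\ch M^0(\mu)$, where $M^0(\mu)$ is the even Verma module. Integrability of a quotient is a condition over $\fsl_n^{(1)}$ alone, so the size of the maximal integrable quotient is governed by the even linkage. When $\lambda$ is regular it lies off every even wall, so $\lambda$ is $W$-regular and the even block is the principal one; I would then establish a BGG-type resolution of $V_\Sigma(\lambda-\rho_\Sigma)$ by the super Verma modules $M_\Sigma(w\lambda-\rho_\Sigma)$, $w\in W$, and take Euler characteristics to obtain
\[
\ch V_\Sigma(\lambda-\rho_\Sigma)=\sum_{w\in W}\sgn(w)\,\ch M_\Sigma(w\lambda-\rho_\Sigma).
\]
This is the step I expect to be the main obstacle: in the super-affine setting the nilpotent subalgebra is infinite-dimensional and carries odd directions, so one must run the even Weyl--Kac mechanism while simultaneously controlling the exterior contribution of $\Delta^+_{\ol 1}$ and checking that the integrability relations neither enlarge nor shrink the quotient beyond this alternating sum.

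Granting the character formula, the ``moreover'' clause of (iii) is structural. If $(\lambda,\alpha)=0$ for an odd $\alpha\in\Sigma$ with $\lambda$ regular, then by \Lem{onlyreg} the weight $\lambda-\alpha$ is again integrable, and by \Cor{onlytwo} and \Rem{order} it is the highest weight of the adjacent module lying strictly below $L$ in the $\Sigma$-order. Comparing the computed character with $\ch L$ shows $V_\Sigma(\lambda-\rho_\Sigma)$ has exactly the two composition factors $L_\Sigma(\lambda-\rho_\Sigma)$ and $L_\Sigma(\lambda-\alpha-\rho_\Sigma)$. Since $(\lambda,\alpha)=0$ for isotropic odd $\alpha$, the vector $f_\alpha v$ is a singular vector of $M_\Sigma(\lambda-\rho_\Sigma)$; its image in $V_\Sigma$ generates a submodule which is integrable with highest weight $\lambda-\alpha$, and by the length-two count it must be the simple module $L_\Sigma(\lambda-\alpha-\rho_\Sigma)$, yielding the stated exact sequence.

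Finally, for (ii) let $\lambda$ be singular atypical, so $(\lambda,\beta_1)=(\lambda,\beta_2)=0$ for the two odd roots of $\Sigma$ and, by \Lem{integrable}, $(\lambda,\beta_1+\beta_2)=0$. The Casimir acts by one scalar on $M_\Sigma(\lambda-\rho_\Sigma)$, so every composition factor of $V_\Sigma(\lambda-\rho_\Sigma)$ is an integrable simple module of the same central charge and Casimir eigenvalue with $\rho$-weight $\leq_\Sigma\lambda$; by the linkage principle in category $\mathcal O$ these factors all lie in the $\Gamma$-component of $L$. By \Thm{maingraph} and \Rem{order} the singular module is the $\Sigma$-minimal vertex of its component, since the string of the component increases in the $\Sigma$-order away from the singular weight in both directions. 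Hence $L$ is the only factor with $\rho$-weight $\leq_\Sigma\lambda$, forcing $V_\Sigma(\lambda-\rho_\Sigma)=L$ and completing the argument modulo the character formula of (iii).
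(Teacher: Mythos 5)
First, a point of reference: the paper does not prove this lemma at all --- it is quoted from~\cite{S2} (``In~\cite{S2} the following lemma is proved''), so there is no internal proof to compare against. Judged on its own terms, your write-up contains sound pieces --- the reduction of (i) to (iii) via \Lem{integrable} (typical integrable implies regular) together with the Kac--Weyl formula of~\S\ref{chformula}, and the argument for (iv) via the isomorphism $M_\Sigma(\lambda-\rho_\Sigma)\cong M_{r_\beta\Sigma}(\lambda-\rho_{r_\beta\Sigma})$ for $L$-typical odd reflections (correct: $f_\beta v$ is a $r_\beta\Sigma$-singular vector generating the whole module when $(\lambda,\beta)\neq 0$, and the maximal integrable quotient depends only on $\Delta^+_{\ol 0}$, which is fixed on all of $\mathcal B$). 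But the heart of the lemma is the character identity in (iii), and you do not prove it: you propose a BGG-type resolution by the super Verma modules $M_\Sigma(w\lambda-\rho_\Sigma)$ and then explicitly flag this as ``the main obstacle.'' Since (i), the length-two statement, and (as you set it up) (ii) all rest on this identity, the proof is incomplete at its central step. (For the record, the argument in~\cite{S2} does not construct a resolution; it identifies $V_\Sigma(\lambda-\rho_\Sigma)$ with the quotient of the Verma module by the submodules generated by $f_\alpha^{(\lambda,\alpha^\vee)}v$ for even simple $\alpha$ and runs a Weyl--Kac support argument on $R\,\ch$.)

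There is a second, independent gap in (ii). You place all composition factors of $V_\Sigma(\lambda-\rho_\Sigma)$ in the $\Gamma$-component of $L$ ``by the linkage principle in category $\mathcal O$,'' and then use \Thm{maingraph} and \Rem{order} to conclude that the singular $L$ is the unique $\leq_\Sigma$-minimal vertex of that component. No such linkage principle is available at this point: the identification of blocks of $\cF_k$ with connected components of $\Gamma$ is \Thm{thm-ext}, which is deduced from \Lem{corext}, which in turn is deduced from the present lemma --- so the argument is circular as written. The data you actually have on a composition factor $L(\mu-\rho_\Sigma)$ (same central charge, same Casimir eigenvalue $\|\mu\|^2=\|\lambda\|^2$, $\mu\leq_\Sigma\lambda$, $\mu$ integrable) do not immediately force $\mu=\lambda$, because for an integrable $\mu$ the inner products $(\mu,\alpha_i)$ with the two odd simple roots may be negative individually, so the sign argument $(\lambda,\nu)+(\mu,\nu)=0\Rightarrow\nu=0$ does not close without the extra care taken, e.g., in the proof of \Prop{propMx2}. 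Either supply that inequality directly (using that $a_1+a_2>0$ or $a_1=a_2=0$ from~\ref{corLambdak}), or argue via \Lem{onlyreg} that no candidate singular vector yields an integrable quotient; as it stands, (ii) is not proved.
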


\subsubsection{}
\begin{lem}{corext} Let $L$ and $L'$ be two non-isomoprhic simple integrable modules. Then $\Ext^1(L,L')\neq 0$ if and only of $L$ and $L'$ are two adjacent
atypical modules. In this case $\Ext^1(L,L')=\mathbb C$.
\end{lem}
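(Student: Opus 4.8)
The plan is to prove both implications from the explicit structure of the maximal integrable quotient $V_\Sigma(\lambda-\rho_\Sigma)$ recorded in \Lem{integrablequotient}, combined with the weight bookkeeping in category $\mathcal O$ and the symmetry $\Ext^1(L,L')=\Ext^1(L',L)$ coming from the duality $\D$.

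The easy direction first. Suppose $L$ and $L'$ are adjacent atypical modules. By definition there are $\Sigma\in\mathcal B$ and an odd $\alpha\in\Sigma$ with $(\rho wt_\Sigma L,\alpha)=0$ and $\rho wt_\Sigma L'=\rho wt_\Sigma L-\alpha$. Writing $\lambda=\rho wt_\Sigma L$, \Lem{onlyreg} shows that $\lambda$ is regular, so \Lem{integrablequotient}(iii) applies and yields the sequence $0\to L'\to V_\Sigma(\lambda-\rho_\Sigma)\to L\to 0$. Since $V_\Sigma(\lambda-\rho_\Sigma)$ is a cyclic highest weight module with simple top $L$, it is indecomposable, so this sequence is non-split and $\Ext^1(L,L')\neq 0$.

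For the converse, fix $\Sigma$ and a non-split extension $0\to L'\to E\to L\to 0$. As the Casimir and the central element act by scalars on $L$ and $L'$, non-splitness forces these scalars to agree, so $L$ and $L'$ share a central charge and Casimir eigenvalue. Put $\lambda=\rho wt_\Sigma L$ and $\lambda'=\rho wt_\Sigma L'$; since $L\not\simeq L'$ these differ. Lift the highest weight vector of $L$ to $v\in E$: either $\fn^+v=0$, in which case $E=U(\fg)v$ is a highest weight module of weight $\lambda-\rho_\Sigma$ and $\lambda>_\Sigma\lambda'$, or $\fn^+v\neq0$, which forces $\lambda'>_\Sigma\lambda$. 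Because both the hypothesis and the conclusion are symmetric in $L,L'$ and $\Ext^1(L,L')=\Ext^1(L',L)$, I may assume the first case: $\lambda>_\Sigma\lambda'$ and $E$ is an integrable highest weight module of weight $\lambda-\rho_\Sigma$, hence a quotient of $V_\Sigma(\lambda-\rho_\Sigma)$ in which $L'$ occurs as a submodule. Now I read off the structure: if $L$ is typical, or $L$ is atypical with $\lambda$ singular, then \Lem{integrablequotient}(i)--(ii) give $V_\Sigma(\lambda-\rho_\Sigma)=L$, forcing $E\cong L$ of length one, a contradiction. Hence $L$ is atypical and $\lambda$ is regular.

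It remains to handle this regular atypical case, which is the main obstacle, since the length-two description in \Lem{integrablequotient}(iii) is only stated in a Borel where $L$ is tame. By \Cor{onlytwo} and \Cor{corhw} the two regular weights in $HW(L)$ are attained exactly at the two Borels where $L$ is tame; as $\lambda\in HW(L)$ is regular, there is a tame $\tilde\Sigma\in\mathcal B$ with $\rho wt_{\tilde\Sigma}L=\lambda$ and an odd $\tilde\alpha\in\tilde\Sigma$ with $(\lambda,\tilde\alpha)=0$. By the invariance \Lem{integrablequotient}(iv), $V_\Sigma(\lambda-\rho_\Sigma)\cong V_{\tilde\Sigma}(\lambda-\rho_{\tilde\Sigma})$, and the tame case of \Lem{integrablequotient}(iii) shows the latter has length two with unique proper submodule $L'_0:=L_{\tilde\Sigma}(\lambda-\tilde\alpha-\rho_{\tilde\Sigma})$. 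Since $\rho wt_{\tilde\Sigma}L'_0=\lambda-\tilde\alpha$, the module $L'_0$ is precisely the one adjacent to $L$ along $(\tilde\Sigma,\tilde\alpha)$. As $E$ is a length-two quotient of the length-two module $V_\Sigma(\lambda-\rho_\Sigma)$, I get $E\cong V_\Sigma(\lambda-\rho_\Sigma)$ and $L'\cong L'_0$; in particular $L'$ is atypical and adjacent to $L$. Finally, every non-split extension of $L$ by $L'_0$ is isomorphic to the single module $V_\Sigma(\lambda-\rho_\Sigma)$, a fixed quotient of $M_\Sigma(\lambda-\rho_\Sigma)$ on which $\Hom$ from the Verma is one-dimensional, so the extension is rigid and $\Ext^1(L,L')=\CC$. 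The two delicate points to verify are the relocation to a tame Borel with the same $\rho wt$-value via \Lem{hw} and \Lem{integrablequotient}(iv), and the matching of the resulting submodule $L'_0$ with the adjacency definition; the weight comparisons guaranteeing that $E$ is a highest weight module are routine.
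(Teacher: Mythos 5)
Your proof is correct and follows essentially the same route as the paper's: reduce a non-split extension, via comparability of $\rho wt_\Sigma$-weights and the duality $\Ext^1(L,L')=\Ext^1(L',L)$, to a quotient of the maximal integrable quotient $V_\Sigma(\lambda-\rho_\Sigma)$, then invoke Lemma~\ref{integrablequotient} together with relocation to a tame Borel to identify the socle with the adjacent module. You additionally spell out the forward direction and the one-dimensionality of $\Ext^1$ (which the paper leaves implicit), but the key ideas coincide.
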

\begin{proof} Consider an extension given by a non-split exact sequence
$$0\to L'\to M\to L\to 0.$$
Choose some  $\Sigma\in\mathcal B$ and let $\lambda= \rho wt_{\Sigma} L$, $\mu=\rho wt_{\Sigma} L'$. If $\lambda$ and $\mu$ are incomparable with respect to
$\leq_{\Sigma}$, then the above sequence splits since a vector of weight $\lambda-\rho_\Sigma$ generates a submodule isomorphic to $L$ in $M$.
Note that duality implies $\Ext^1(L,L')=\Ext^1(L',L)$. Therefore without loss of generality we may assume that $\mu<_{\Sigma}\lambda$. But then $M$ is a quotient
of $V_{\Sigma}(\lambda-\rho_\Sigma)$. By Lemma \ref{integrablequotient} we know that the length of $V_{\Sigma}(\lambda-\rho_\Sigma)$ is at most $2$. Hence
$M\simeq V_{\Sigma}(\lambda-\rho_\Sigma)$, $L$ is atypical and $\lambda$ is regular. Then there exists $\Sigma'\in\mathcal B$ such that $L$ is $\Sigma'$-tame
and $\rho wt_\Sigma L=\rho wt_{\Sigma'} L$. By Lemma \ref{integrablequotient} (iv) we obtain $\rho wt_{\Sigma'} L'=\rho wt_{\Sigma'} L-\alpha$ for odd $\alpha\in\Sigma'$
such that $(\lambda,\alpha)$. Hence by definition $L$ and $L'$ are adjacent.
That proves the statement.
\end{proof}

\subsection{Self-extensions of simple modules}
\subsubsection{}\label{top}
Recall that $\fg$ is the affinization of $\dot{\fg}=\fsl(1|n)$. Fix
$\Sigma\in\mathcal{B}$ and $\alpha\in\Sigma$.
Note that $\Sigma\setminus\{\alpha\}$ is the set of simple roots of some
subalgebra isomorphic to $\dot\fg\simeq\fsl(1|n)$. Let $h\in\fh^*$ be such that
$$\beta(h)=\begin{cases} 0\,\,\text{if}\,\,\beta\in\Sigma,\beta\neq\alpha\\1 \,\,\text{if}\,\,\beta=\alpha\end{cases}.$$

Let $N$ be such that $h$ acts locally finitely and the eigenvalues are bounded:
there exist a ``maximal'' eigenvalue, i.e.
an eigenvalue $a$ such that $a+j$ is not an eigenvalue
for any positive integer $j$. In this case we denote by $N^{top}$
be the generalized $h$-eigenspace with the maximal eigenvalue.

Observe that if $L=L_{\Sigma}(\lambda)$ is simple, then $L^{top}$
is a simple $\fsl(1|n)$-module with highest weight $\lambda|_{\dot{\fh}}$ where ${\dot{\fh}}$ is the Cartan subalgebra of ${\dot{\fg}}$ .
If $N$ is integrable, then $N^{top}$ is finite-dimensional.

The centre  of $\fsl(1|n)^{(1)}_{\ol{0}}$ is
two-dimensional: it is spanned by $K$ and $z$, where $z$ is a central element
in $\mathfrak{sl}(1|n)_{\ol{0}}=\fgl_n$.

\subsubsection{}
\begin{lem}{applself1} Let $L$ be a simple module and
$$0\to L\to M\to L\to 0$$
be a non-split exact sequence, then the sequence
$$0\to L^{top}\to M^{top}\to L^{top}\to 0$$
also does not split.
\end{lem}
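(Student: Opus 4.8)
The plan is to identify the functor $N\mapsto N^{top}$ with extraction of the top graded piece of an honest $\mathbb Z_{\geq 0}$-grading and then argue by contraposition. On the weight $\lambda-\sum_{\beta\in\Sigma}n_\beta\beta$ the element $h$ takes the value $\lambda(h)-n_\alpha$, so every module in $\mathcal O$ carries a grading by $n_\alpha\in\mathbb Z_{\geq 0}$, and $N^{top}$ is precisely the piece $n_\alpha=0$. All $\fg$-maps preserve weights, hence this grading, so the functor $N\mapsto N^{top}$ is exact; in particular $0\to L^{top}\to M^{top}\to L^{top}\to 0$ is a genuine exact sequence of $\fl$-modules and splitting it is meaningful. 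I also record the structural facts that $\fn^{+}$ raises the grading and thus annihilates any top piece, while the centralizer $\fl$ of $h$ preserves it, so for any $N$ one has $U(\fg)N^{top}=U(\fn^{-})N^{top}$.

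I would prove the lemma by contraposition: assuming the top sequence splits, I construct a splitting of $0\to L\to M\to L\to 0$. Let $\pi\colon M\to L$ be the projection. A splitting of the top sequence provides an $\fl$-submodule $S\subseteq M^{top}$ with $S\cong L^{top}$ mapping isomorphically onto the top $\bar L^{top}$ of the quotient $L$. Since $S\subseteq M^{top}$ we have $\fn^{+}S=0$ and $\fl S=S$, whence the $\fg$-submodule $M':=U(\fg)S$ equals $U(\fn^{-})S$. As $U(\fn^{-})$ has one-dimensional degree-zero part, the top piece of the graded module $M'$ is exactly $S$, i.e.\ $(M')^{top}=S$.

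It then remains to check that $\pi$ restricts to an isomorphism $M'\iso L$. Surjectivity is immediate, since $\pi(M')=U(\fn^{-})\pi(S)=U(\fn^{-})\bar L^{top}=L$, because the highest weight module $L$ is generated over $U(\fn^{-})$ by its top piece. For injectivity, set $K:=M'\cap\ker\pi$, a $\fg$-submodule of $M$. By exactness of $(-)^{top}$ one has $K^{top}=(M')^{top}\cap(\ker\pi)^{top}=S\cap L^{top}$, where $L^{top}$ denotes the top of the sub $L=\ker\pi$; by the splitting property this intersection is $0$. Since $h$ is bounded above on $M$ with maximal eigenvalue attained, every nonzero submodule has nonzero top, so $K^{top}=0$ forces $K=0$. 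Thus $\pi|_{M'}$ is an isomorphism, giving a splitting of the original sequence and contradicting its non-splitness.

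The genuinely delicate point is the exactness and correct identification of $N\mapsto N^{top}$ together with the implication ``nonzero submodule $\Rightarrow$ nonzero top''. The subtlety is that $\delta(h)=1$, so the $h$-grading is \emph{not} the usual affine energy grading; what rescues the argument is that on a module in $\mathcal O$ the grading is nonetheless by the single coefficient $n_\alpha$, which is bounded below, so the maximal $h$-eigenvalue is attained on every nonzero subquotient. Once this is established, the degree bookkeeping yielding $(M')^{top}=S$ and $K^{top}=0$ is routine.
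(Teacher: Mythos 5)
Your argument is in substance the same as the paper's, which simply observes that if $M^{top}\cong L^{top}\oplus L^{top}$ then the two copies of $L^{top}$ generate two proper, distinct submodules of the length-two module $M$, forcing $M$ to split; your $M'=U(\fg)S$ is exactly one of these generated submodules, with the identification $M'\iso L$ made explicit via $\pi$. Two of your justifications are stated imprecisely, although the conclusions they support are correct. First, $U(\fn^-)$ does \emph{not} have one-dimensional degree-zero part for the $h$-grading: the negative root spaces $\fg_{-\gamma}$ with $\gamma$ having zero coefficient at $\alpha$ sit in degree zero. What actually gives $(M')^{top}=S$ is that these root spaces lie in the centralizer $\fl$ of $h$ and $S$ is $\fl$-stable. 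Second, ``$h$ bounded above with maximal eigenvalue attained'' does not by itself imply that every nonzero submodule meets $M^{top}$ --- a proper submodule of a Verma module is a counterexample. Here the implication holds for a different reason: $M$ has length two with both composition factors isomorphic to $L$, so any nonzero submodule $K$ contains a simple submodule isomorphic to $L$, whose $\lambda$-weight space is nonzero and lies in $M^{top}$. With these two repairs your proof is complete and coincides with the paper's, written out in more detail.
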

\begin{proof} If $M^{top}\simeq L^{top}\oplus L^{top}$, the two copies of $M^{top}$ generate two proper distinct submodules of $M$.
Since $M$ has length $2$, it is a direct sum of two simple modules.
\end{proof}

\subsection{} Recall now the following result from \cite{G}.

\begin{lem}{Germoni} If $N$ is a simple $\dot{\fg}$-module, then $\Ext^1(N,N)=0$ if $N$ is atypical and $\Ext^1(N,N)=\mathbb C$ if $N$ is typical.
\end{lem}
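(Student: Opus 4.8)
The plan is to compute $\Ext^1_{\dot\fg}(N,N)$ as a relative Lie superalgebra cohomology group and to read off the typical/atypical dichotomy from the representation theory of $\dot\fg_{\ol 0}=\fgl_n$. First I would reduce to relative cohomology. Since $\dot\fg_{\ol 0}=\fgl_n$ is reductive, the finite-dimensional module $N$ is semisimple over $\dot\fg_{\ol 0}$, so every self-extension $0\to N\to M\to N\to 0$ splits over $\dot\fg_{\ol 0}$. This gives a canonical identification
$$\Ext^1_{\dot\fg}(N,N)\cong H^1(\dot\fg,\dot\fg_{\ol 0};\End_{\CC}N),$$
the first relative cohomology with coefficients in the $\dot\fg$-module $\End_{\CC}N=N\otimes N^*$. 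Because $\dot\fg/\dot\fg_{\ol 0}=\dot\fg_{\ol 1}$ is purely odd, the relative cochain groups are $C^p=\Hom_{\dot\fg_{\ol 0}}(S^p\dot\fg_{\ol 1},\End_{\CC}N)$, so $H^1$ is the cohomology of
$$\End_{\dot\fg_{\ol 0}}(N)\ \xrightarrow{\ d_0\ }\ \Hom_{\dot\fg_{\ol 0}}(\dot\fg_{\ol 1},\End_{\CC}N)\ \xrightarrow{\ d_1\ }\ \Hom_{\dot\fg_{\ol 0}}(S^2\dot\fg_{\ol 1},\End_{\CC}N),$$
where $(d_0\phi)(x)=x\cdot\phi$ and a cochain $c$ is a cocycle precisely when $x\cdot c(y)+y\cdot c(x)=0$ in $\End_{\CC}N$ for all $x,y\in\dot\fg_{\ol 1}$. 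A direct check that an $\dot\fg_{\ol 0}$-split extension $M=N\oplus N$ is an $\dot\fg$-module reproduces exactly this cocycle condition and the coboundaries $c(x)=x\cdot\phi$ coming from a change of splitting, so the abstract identification is also transparent by hand.

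Second, I would make this complex explicit. Note $H^0(\dot\fg,\dot\fg_{\ol 0};\End_{\CC}N)=\End_{\dot\fg}(N)=\CC\,\id$, since $\dot\fg=\fsl(1|n)$ is of type $A(0,n-1)$ and its simple finite-dimensional modules are of type $M$ (absolutely simple). To handle $d_0$ and $d_1$ I would use the decomposition $\dot\fg_{\ol 1}=V\oplus V^{*}$ into the standard $\fgl_n$-module $V=\CC^n$ and its dual, together with the consistent $\ZZ$-grading $\dot\fg=\dot\fg_{-1}\oplus\dot\fg_0\oplus\dot\fg_1$. Realizing $N$ through the Kac module then presents $N|_{\dot\fg_{\ol 0}}$ as a constituent of $\Lambda V^{*}\otimes N_0(\lambda)$, with $N_0(\lambda)$ the simple $\fgl_n$-module of highest weight $\lambda$. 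Consequently $C^1$ splits into a $V$-part and a $V^{*}$-part, each a sum of $\fgl_n$-equivariant maps whose multiplicities are governed by the Pieri-type decompositions of $V\otimes N_0(\lambda)$ and $V^{*}\otimes N_0(\lambda)$; the coboundary $d_0$ and the quadratic cocycle relation can then be evaluated on weight vectors.

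The decisive point, and the main technical obstacle, is to show that the cohomology is one-dimensional exactly when $N$ is typical. The mechanism is that the odd operators $c(x)$ are controlled by the Shapovalov-type factors $(\lambda+\rho,\beta)$ attached to the odd roots $\beta$: when all of these are nonzero (the typicality condition $(\rho wt\,N,\beta)\neq 0$), the cocycle equation admits a single nonzero solution that cannot be written as $d_0\phi$ because $\ker d_0=\End_{\dot\fg}(N)=\CC\,\id$ absorbs only the trivial direction, whence $H^1=\CC$; when some $(\lambda+\rho,\beta)=0$ (atypical $N$), the corresponding equivariant component degenerates, the relation imposed in $C^2$ on $S^2\dot\fg_{\ol 1}$ forces every cocycle into the image of $d_0$, and $H^1=0$. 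As a sanity check, the trivial module is atypical and indeed $H^1(\dot\fg,\dot\fg_{\ol 0};\CC)=\Hom_{\fgl_n}(\dot\fg_{\ol 1},\CC)=0$ since $V\oplus V^{*}$ contains no trivial $\fgl_n$-summand for $n\geq 2$. Carrying out the weight-by-weight bookkeeping, tracking the constraint coming from $C^2$, and verifying that the surviving class is genuinely non-trivial is the heart of the argument; this is precisely the computation established in~\cite{G}, and its outcome matches the typicality condition used throughout the preceding sections.
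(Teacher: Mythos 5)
The central step of your reduction is false, and it fails in exactly the case that carries the content of the lemma. You argue that since $\dot\fg_{\ol 0}=\fgl_n$ is reductive, every finite-dimensional module is semisimple over it, hence every self-extension splits over $\dot\fg_{\ol 0}$ and $\Ext^1_{\dot\fg}(N,N)\cong H^1(\dot\fg,\dot\fg_{\ol 0};\End_{\CC}N)$. But $\fgl_n$ is reductive, not semisimple: its one-dimensional centre $\CC z$ may act on a finite-dimensional module by a nontrivial Jordan block, so Weyl's theorem does not apply and not every self-extension of $N$ is $\dot\fg_{\ol 0}$-split. The relative cohomology group you write down computes $\Ext^1$ only in the category of modules on which $\dot\fg_{\ol 0}$ (equivalently, $z$) acts semisimply. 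In that category typical simple modules are the Kac modules, which are projective and injective, so for typical $N$ your $H^1$ is $0$ --- the opposite of the answer you need. The nonzero self-extension of a typical $N$ is precisely the one on which $z-\lambda(z)$ acts by a nonzero nilpotent operator; this is what the paper's equivalence of a typical block with the category of finite-dimensional $\CC[z]$-modules with nilpotent $z-\lambda(z)$ encodes, and such an extension is invisible to your complex because it does not split over $\dot\fg_{\ol 0}$. So the ``surviving class'' you describe in the typical case cannot live where you are looking for it, and the mechanism you propose for distinguishing typical from atypical is not the right one.

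Separately, even within your framework the decisive computation is never carried out: at the crucial moment (``the cocycle equation admits a single nonzero solution \dots'', ``the relation imposed in $C^2$ \dots forces every cocycle into the image of $d_0$'') you assert the outcome and then defer to~\cite{G}. The paper itself states this lemma as a recalled result of Germoni and gives no proof, so citing~\cite{G} is legitimate --- but then the cohomological scaffolding adds nothing, and as written it actively misidentifies the group being computed. If you want a genuine proof, you must work in the category allowing non-semisimple action of the centre (equivalently, of $\fh$), for instance by analyzing quotients of Kac modules and of the projective covers directly, which is what Germoni does.
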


\subsubsection{}
\begin{cor}{atypicalself} For a simple atypical $\fg$-module $L$, $\Ext^1(L,L)=0$.
\end{cor}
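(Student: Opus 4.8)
The plan is to reduce the self-extension problem over $\fg$ to the corresponding problem over $\dot{\fg}=\fsl(1|n)$ through the functor $N\mapsto N^{\tp}$ and then to invoke \Lem{Germoni}. Suppose, for contradiction, that $\Ext^1(L,L)\neq 0$, so that there is a non-split exact sequence
\[
0\to L\to M\to L\to 0
\]
in $\cF_k$. Being an extension of integrable modules, $M$ is itself integrable, so for a suitable choice of $\Sigma$ and $\alpha$ the element $h$ of~\ref{top} acts on $M$ locally finitely with bounded eigenvalues, and $M^{\tp}$ is defined and finite-dimensional. By \Lem{applself1} the induced sequence $0\to L^{\tp}\to M^{\tp}\to L^{\tp}\to 0$ of $\dot{\fg}$-modules is again non-split, whence $\Ext^1_{\dot{\fg}}(L^{\tp},L^{\tp})\neq 0$. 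Since $L^{\tp}$ is a \emph{simple} $\fsl(1|n)$-module, \Lem{Germoni} then forces $L^{\tp}$ to be \emph{typical}. The whole argument will close once I show that the choices can be arranged so that $L^{\tp}$ is \emph{atypical}, giving the desired contradiction.

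The key step is therefore the transfer of atypicality. Since $L$ is atypical, it is tame with respect to some $\Sigma\in\cB$; that is, there is an odd $\beta\in\Sigma$ with $(\rho wt_{\Sigma}L,\beta)=0$. I would fix this $\Sigma$ and take for $\alpha$ (the root used to build $L^{\tp}$) any \emph{even} simple root of $\Sigma$; there are $n-1\geq 1$ of these. Removing an even node from the $(n+1)$-cycle Dynkin diagram of $\Sigma$ yields a chain carrying both odd nodes, i.e. a base $\dot\Sigma:=\Sigma\setminus\{\alpha\}$ of $\dot{\fg}=\fsl(1|n)$; in particular the tame root $\beta$ survives as an odd simple root of $\dot\Sigma$. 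Writing $\lambda$ for the highest weight of $L$, the module $L^{\tp}$ is the simple $\dot{\fg}$-module of highest weight $\lambda|_{\dot{\fh}}$, so its atypicality is governed by $(\,\lambda|_{\dot{\fh}}+\dot\rho,\,\gamma)$ for odd roots $\gamma$ of $\dot{\fg}$.

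I would then evaluate this pairing at $\gamma=\beta$. Because $\beta$ is isotropic, $2(\dot\rho,\beta)=(\beta,\beta)=0$ and likewise $(\rho_{\Sigma},\beta)=0$, so the $\rho$-shifts contribute nothing; since $\beta$ is a root of $\dot{\fg}$, pairing with it only sees $\lambda|_{\dot{\fh}}$. Hence
\[
(\,\lambda|_{\dot{\fh}}+\dot\rho,\,\beta)=(\lambda,\beta)=(\rho wt_{\Sigma}L-\rho_{\Sigma},\beta)=(\rho wt_{\Sigma}L,\beta)=0,
\]
which exhibits $L^{\tp}$ as atypical and contradicts the typicality forced above. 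The subtle point, and the one I would write out most carefully, is precisely this compatibility: that the even choice of $\alpha$ keeps $\beta$ in $\dot\Sigma$ and that the $\rho$-shift discrepancy between $\fg$ and $\dot{\fg}$ vanishes against an isotropic simple root. Everything else is formal, and we conclude $\Ext^1(L,L)=0$.
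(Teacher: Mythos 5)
Your argument is correct and follows the paper's own proof exactly: reduce to $\dot{\fg}$ via $N\mapsto N^{\tp}$ using \Lem{applself1}, then apply \Lem{Germoni} after choosing $\Sigma$ (with $L$ being $\Sigma$-tame) and an even $\alpha\in\Sigma$ so that $L^{\tp}$ is atypical. The paper leaves the existence of such $\Sigma$ and $\alpha$ as ``easy to find''; you have simply written out that verification, and it checks out.
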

\begin{proof} It is easy to find $\Sigma$ and $\alpha\in\Sigma$ such that $L^{top}$ is atypical. Then the statement follows from Lemma \ref{applself1} and
Lemma \ref{Germoni}.
\end{proof}

\subsubsection{} In \cite{S2} the following statement is proved (Lemma 4.13.)
\begin{lem}{deformation} Let $L$ be a simple module and $\Sigma\in\mathcal B$ be such that $\rho wt_\Sigma L$ is regular. Let $\omega$ be a weight such that
$(\omega,\alpha)=1$ for some odd $\alpha\in\Sigma$ and $(\omega,\beta)=-1$ for another odd $\beta\in\Sigma$ and
$(\omega,\gamma)=0$ for all even $\gamma\in\Sigma$.
Then $V_\Sigma(\lambda-\rho_\Sigma+t\omega)$ is a flat deformation of $V_\Sigma(\lambda-\rho_\Sigma)$.
\end{lem}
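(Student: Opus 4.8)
The plan is to deduce everything from the typical character formula of \Lem{integrablequotient}(iii), the decisive point being that the shift $\omega$ is fixed by the even Weyl group $W$. Write $\lambda:=\rho wt_{\Sigma}L$, let $\alpha,\beta$ be the two odd roots of $\Sigma$ (so that $(\omega,\alpha)=1$, $(\omega,\beta)=-1$), and let $\gamma_1,\dots,\gamma_{n-1}$ be its even simple roots. First I would record that $\{\gamma_1,\dots,\gamma_{n-1},\alpha+\beta\}$ is exactly the base $\Pi_0$ of the even root system (cf.~\ref{uniqueSigma}, where $\alpha+\beta=\beta_1+\beta_2$). By hypothesis $(\omega,\gamma_j)=0$, and $(\omega,\alpha+\beta)=(\omega,\alpha)+(\omega,\beta)=0$, so $\omega$ is fixed by every simple reflection and hence $w\omega=\omega$ for all $w\in W$. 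Since each even real root is $W$-conjugate to a root in $\Pi_0$, it follows that $(\omega,\eta)=0$ for \emph{every} even real root $\eta$.

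Consequently $(\lambda+t\omega,\eta)=(\lambda,\eta)$ for every even real root $\eta$ and every $t$. In particular $\lambda+t\omega$ stays regular, and the integrability data are unchanged: $(\lambda+t\omega,\gamma)=(\lambda,\gamma)\in\mathbb Z_{\geq 0}$ for $\gamma\in\Pi_0$, while $(\lambda+t\omega,\alpha+\beta)=(\lambda,\alpha+\beta)\in\mathbb Z_{>0}$ (the latter is positive because $\lambda$ is regular and integrable). Thus $L_\Sigma(\lambda-\rho_\Sigma+t\omega)$ is integrable for all $t$, and I would set up the deformation over $R=\mathbb{C}[t]$ as follows: let $\widetilde M=M_\Sigma(\lambda-\rho_\Sigma+t\omega)$, a free $R$-module, and put $\widetilde V=\widetilde M/\widetilde J$, where $\widetilde J$ is generated by the even integrability relations $f_\gamma^{\,c_\gamma}\tilde v$, $\gamma\in\Pi_0$. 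Because $(\omega,\gamma)=0$, the exponents $c_\gamma$ do not involve $t$, so $\widetilde J$ is cut out by $t$-independent relations, and the specialization of $\widetilde V$ at each $t_0$ is the maximal $\fsl_n^{(1)}$-integrable quotient $V_\Sigma(\lambda-\rho_\Sigma+t_0\omega)$.

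It remains to prove flatness, i.e.\ that the weight multiplicities of $\widetilde V$ are independent of $t$. Here I would apply \Lem{integrablequotient}(iii) fiberwise, which is legitimate because $\lambda+t\omega$ is regular at \emph{every} $t$; combining it with $w(\lambda+t\omega)=w\lambda+t\omega$ and factoring out the global shift $e^{t\omega}$ gives
$$\ch V_\Sigma(\lambda-\rho_\Sigma+t\omega)=\sum_{w\in W}\sgn(w)\,\ch M_\Sigma(w(\lambda+t\omega)-\rho_\Sigma)=e^{t\omega}\,\ch V_\Sigma(\lambda-\rho_\Sigma).$$
Hence each weight space of $\widetilde V$ is a finitely generated $R$-module whose fiber dimension is constant in $t_0$; over the PID $R=\mathbb{C}[t]$ this forces freeness, so $\widetilde V$ is $R$-flat, as required. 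The main obstacle is precisely the phrase ``at every $t$'': one must rule out the appearance of extra singular vectors at special values of the parameter, which would make the fiber dimension jump. This is exactly what the $W$-invariance of $\omega$ prevents, since it keeps $\lambda+t\omega$ regular throughout and thereby keeps the typical formula valid for all $t$, not merely generically.
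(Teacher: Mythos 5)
Your argument is correct, but note that the paper itself does not prove this lemma: it is quoted verbatim from \cite{S2} (Lemma 4.13), so there is no in-text proof to compare against. Your reconstruction isolates the right mechanism, namely that $\omega$ vanishes on all of $\Pi_0$ (the even simple roots of $\Sigma$ together with $\alpha+\beta$) and is therefore $W$-invariant and orthogonal to every even real root; this keeps $\lambda+t\omega$ regular and integrable for \emph{every} $t$, so the typical character formula of Lemma~\ref{integrablequotient}(iii) applies fiberwise and gives $\ch V_\Sigma(\lambda-\rho_\Sigma+t\omega)=e^{t\omega}\ch V_\Sigma(\lambda-\rho_\Sigma)$, whence constant fiber dimension and freeness over $\mathbb{C}[t]$. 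This is essentially the argument behind the cited result. The one step you assert rather than justify is the identification of the specialization of $\widetilde V=\widetilde M/\widetilde J$ at $t_0$ with the \emph{maximal} integrable quotient $V_\Sigma(\lambda-\rho_\Sigma+t_0\omega)$: this needs the standard fact (Kac, Lemmas 3.4--3.5, used elsewhere in the paper in the proof of Corollary~\ref{corvert1}) that for a highest weight module the submodule generated by $f_\gamma^{\,(\mu+\rho,\gamma^\vee)+1}v$, $\gamma\in\Pi_0$, is exactly the smallest submodule with $\fsl_n^{(1)}$-integrable quotient; here one should also observe that $e_{\alpha+\beta}v=0$ since $\alpha+\beta$ is positive though not simple for $\Sigma$. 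With that made explicit the proof is complete.
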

\subsubsection{}
\begin{cor}{intself} Under assumptions of the previous lemma the module $V_\Sigma(\lambda-\rho_\Sigma+t\omega)/(t^p)$ is an indecomposable module which has a
filtration with associated quotients ismorphic $V_\Sigma(\lambda-\rho_\Sigma)$.
\end{cor}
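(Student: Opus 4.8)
The plan is to read off both assertions directly from the flatness supplied by \Lem{deformation}. Write $A=\mathbb{C}[t]/(t^p)$ and let $\widetilde{V}:=V_\Sigma(\lambda-\rho_\Sigma+t\omega)$, viewed as a $\fg$-module which is free over $\mathbb{C}[t]$ and satisfies $\widetilde{V}/t\widetilde{V}\cong V:=V_\Sigma(\lambda-\rho_\Sigma)$; put $W:=\widetilde{V}/t^p\widetilde{V}$. The $\fg$-action is $\mathbb{C}[t]$-linear, so multiplication by $t$ is a $\fg$-endomorphism and, by freeness, multiplication by $t^i$ induces an isomorphism of $\fg$-modules $\widetilde{V}/t\widetilde{V}\xrightarrow{\sim}t^i\widetilde{V}/t^{i+1}\widetilde{V}$. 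Hence
$$W=\widetilde{V}/t^p\widetilde{V}\supset t\widetilde{V}/t^p\widetilde{V}\supset\cdots\supset t^{p-1}\widetilde{V}/t^p\widetilde{V}\supset 0$$
is a filtration whose subquotients are all isomorphic to $V$, which settles the assertion about the associated quotients.

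For indecomposability I would compute $\End_\fg(W)$ and show it is the local ring $A$. Let $\bar v$ be the image in $W$ of the highest weight vector, so $h\bar v=\big((\lambda-\rho_\Sigma)(h)+t\,\omega(h)\big)\bar v$ for $h\in\fh$. First I would check that $\bar v$ generates $W$ over $\fg$: since $(\omega,\alpha)=1$ we have $\omega\neq 0$, so choosing $h_0$ with $\omega(h_0)\neq 0$ gives $t\bar v=\omega(h_0)^{-1}\big(h_0-(\lambda-\rho_\Sigma)(h_0)\big)\bar v\in U(\fg)\bar v$, and inductively $t^j\bar v\in U(\fg)\bar v$; as $W$ is generated over $\fg\otimes A$ by $\bar v$, it follows that $U(\fg)\bar v=W$. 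Consequently every $\varphi\in\End_\fg(W)$ is determined by $\varphi(\bar v)$.

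Next I would locate $\varphi(\bar v)$. Because $V$ is a highest weight module, $\lambda-\rho_\Sigma$ is its unique maximal weight and $V_{\lambda-\rho_\Sigma}$ is one-dimensional; under the deformation the weight spaces are shifted uniformly by $t\omega$, so the top generalized $\fh$-weight space of $W$ (generalized eigenvalue $\lambda-\rho_\Sigma$) is exactly $A\bar v$, of dimension $p$. Since $\varphi$ commutes with $\fh$ it preserves this space, whence $\varphi(\bar v)=f(t)\bar v$ for some $f\in A$; by $\mathbb{C}[t]$-linearity $\varphi$ is then multiplication by $f(t)$. Thus $\End_\fg(W)\cong A=\mathbb{C}[t]/(t^p)$, a local ring, and therefore $W$ is indecomposable.

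The main obstacle is the claim that the top generalized weight space of $W$ is precisely $A\bar v$: this is what forces $\End_\fg(W)$ to be no larger than $A$, and it rests on two inputs, namely that the highest weight space of $V$ is one-dimensional and that the flat deformation of \Lem{deformation} is realized by the uniform shift $\mu\mapsto\mu+t\omega$ on each weight space (so that $\widetilde{V}_\mu\cong V_\mu\otimes\mathbb{C}[t]$, and distinct classical weights $\mu_1\neq\mu_2$ remain distinguished modulo $t^p$ because $\mu_1-\mu_2$ is $t$-independent). I would make this weight-shift structure explicit from the construction of the deformation before running the endomorphism computation; the remaining steps are routine.
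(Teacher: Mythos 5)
Your argument is correct. The paper states this corollary without proof, treating it as immediate from \Lem{deformation}; your write-up supplies exactly the intended standard details — freeness over $\mathbb{C}[t]$ makes the $t$-adic filtration of $V_\Sigma(\lambda-\rho_\Sigma+t\omega)/(t^p)$ a filtration with subquotients $V_\Sigma(\lambda-\rho_\Sigma)$, and indecomposability follows because the generalized $(\lambda-\rho_\Sigma)$-weight space is the free rank-one $\mathbb{C}[t]/(t^p)$-module on the highest weight vector (which generates over $\cU(\fg)$), forcing $\End_{\fg}$ to be the local ring $\mathbb{C}[t]/(t^p)$.
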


\subsection{Typical blocks in $\cF_k$}

Let $\dot{L}$ be a typical finite-dimensional $\mathfrak{sl}(1|n)$-module of highest weight $\dot{\lambda}$
and let  $\cF(\dot{L})$ be the  block containing $\dot{L}$
in the category  of finitely generated $\mathfrak{sl}(1|n)$-modules.
It is easy to deduce from~\cite{G} that  the functor $N\mapsto N_{\lambda}$ (here $N_{\lambda}$ is the subspace with generalized weight $\lambda$)
provides an equivalence between $\cF(\dot{L})$
and the category of finite-dimensional  $\mathbb{C}[z]$-modules with nilpotent action of $z-\lambda(z)$.

\subsubsection{} Retain notation of~\S~\ref{top}.

\begin{cor}{cortyp}
For any typical simple module $L$ in $\cF_k$ there exists a block $\cF_k(L)$ of $\cF_k$
which has one up to isomorphism simple module $L$.
The functor $N\mapsto N^{top}$ provides an equivalence
between $\cF_k(L)$ and the typical block of the category of finitely generated $\mathfrak{sl}(1|n)$-modules.
The functor $N\to N_{\lambda}$ provides an equivalence
between $\cF_k(L)$ and the category of finite-dimensional  $\mathbb{C}[z]$-modules with nilpotent action
of $z-\lambda(z)$.
\end{cor}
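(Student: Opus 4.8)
The plan is to prove the three assertions in turn, reducing everything to the single simple object $L$ and the self-extension data already computed. First I would show that $\cF_k(L)$ has $L$ as its only simple object. Since $L$ is typical, \Lem{corext} gives $\Ext^1(L,L')=0$ for every simple $L'\not\simeq L$ (a nonzero $\Ext^1$ forces both modules to be atypical and adjacent). As blocks are the equivalence classes generated by nonvanishing $\Ext^1$ between simples, the block of $L$ contains no other simple; consequently every object of $\cF_k(L)$ has all its composition factors isomorphic to $L$.

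Next I would set up the functor $(-)^{top}$. Fix $\Sigma\in\cB$, $\alpha\in\Sigma$, and $h$ as in \S\ref{top}, and put $a_0:=(\rho wt_\Sigma L)(h)$. Because $L$ is typical, $\rho wt_\Sigma L$ is independent of $\Sigma$ and is nonzero on every odd root, so its restriction to the odd roots of the subalgebra $\dot{\fg}\simeq\fsl(1|n)$ spanned by $\Sigma\setminus\{\alpha\}$ is nonzero; hence $\dot L:=L^{top}$ is a typical simple $\fsl(1|n)$-module. The generalized $h$-eigenspace at a fixed value is an exact functor (a functorial direct summand), so applying it to a composition series shows that for nonzero $N\in\cF_k(L)$ one has $N_{(a')}=0$ for $a'>a_0$ and $N_{(a_0)}\neq 0$; thus $a_0$ is the maximal eigenvalue and $N^{top}=N_{(a_0)}$. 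Therefore $N\mapsto N^{top}=N_{(a_0)}$ is exact on $\cF_k(L)$, and it is faithful since any nonzero object of the block has nonzero top, so a morphism with vanishing top has zero image. By exactness $N^{top}$ has all composition factors isomorphic to $\dot L$, i.e. it lands in the typical block $\cF(\dot L)$ of $\fsl(1|n)$-modules.

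Then I would promote this to an equivalence. The functor sends the unique simple $L$ to the unique simple $\dot L$ and is an isomorphism $\End(L)=\CC\to\End(\dot L)=\CC$. On self-extensions, \Lem{applself1} shows a non-split self-extension of $L$ has non-split top, so the induced map $\Ext^1_{\cF_k(L)}(L,L)\to\Ext^1_{\fsl(1|n)}(\dot L,\dot L)$ is injective; by \Cor{intself} (via \Lem{deformation}, using $V_\Sigma(\lambda-\rho_\Sigma)=L$ from \Lem{integrablequotient}(i)) the source is nonzero, while by \Lem{Germoni} the target is $\CC$, so the map is an isomorphism and $\Ext^1_{\cF_k(L)}(L,L)=\CC$. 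For essential surjectivity I would use the explicit modules $V_\Sigma(\lambda-\rho_\Sigma+t\omega)/(t^m)$ of \Cor{intself}: these are the length-$m$ uniserial self-extensions of $L$, and by the same construction their tops are the indecomposable length-$m$ objects of $\cF(\dot L)$; since every object of $\cF(\dot L)$ is a direct sum of such indecomposables, $(-)^{top}$ is essentially surjective. Full faithfulness then follows by dévissage on length, using exactness, faithfulness, preservation of length, and the five lemma applied to the $\Hom$/$\Ext^1$ long exact sequences, the base cases being the isomorphisms on $\Hom$ and $\Ext^1$ of the simple.

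Finally, the equivalence $N\mapsto N_\lambda$ with the category of finite-dimensional $\CC[z]$-modules on which $z-\lambda(z)$ acts nilpotently is obtained by composing $(-)^{top}$ with the equivalence between $\cF(\dot L)$ and $\CC[z]$-modules recalled before the statement, which is $M\mapsto M_\lambda$; for $N\in\cF_k(L)$ the $\lambda$-weight space of the $\fsl(1|n)$-module $N^{top}$ is exactly $N_\lambda$, so the composite is the claimed functor. The main obstacle is precisely the passage from the $\Ext^1$-isomorphism to a genuine equivalence: one must know that $\cF_k(L)$ is controlled by its degree-one self-extension, i.e. that its Yoneda algebra is the polynomial ring in one variable with no higher obstructions, so that the deformation modules of \Cor{intself} exhaust the indecomposables and full faithfulness propagates through all lengths.
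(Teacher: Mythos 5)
Your overall strategy is the same as the paper's: reduce to Germoni's description of the typical blocks of finite-dimensional $\fsl(1|n)$-modules via the functor $N\mapsto N^{\top}$, using \Lem{corext} to isolate $L$ in its block and \Lem{applself1}, \Lem{Germoni}, \Lem{deformation}/\Cor{intself} to control self-extensions. The paper itself gives essentially no argument for this corollary beyond that reduction, so you are supplying details it omits, and most of what you supply is right. The one place where your mechanism genuinely diverges is the passage from the $\Ext^1$-isomorphism to full faithfulness: your d\'evissage via the five lemma on the $\Hom$/$\Ext^1$ long exact sequences requires, at the inductive step, injectivity of the induced maps on $\Ext^1(X,Y)$ for \emph{non-simple} $X,Y$, and this does not follow formally from the base case (injectivity on $\Ext^1$ of the simple) without some a priori control of $\Ext^2$ or of the connecting maps; a splitting of the image sequence cannot be lifted before fullness is known, so there is a circularity risk. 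The paper's implicit route --- visible in the proof of \Thm{thmatyp} --- avoids this by constructing an exact left adjoint $\Phi$ to $(-)^{\top}$ (the maximal quotient of $U(\fg)\otimes_{U(\fp)}(-)$ lying in the block, $\fp=\fb+\dot{\fg}$) and checking that the adjunction morphisms are isomorphisms on the simple, hence on all objects by exactness and induction on length. Your argument can be repaired either by adopting that adjoint-functor device, or by pushing your essential-surjectivity step further: since the target category is uniserial with indecomposables $\CC[z]/(z-\lambda(z))^m$ and you exhibit objects $V_m$ with $V_m^{\top}$ indecomposable of length $m$, it suffices to show every indecomposable of $\cF_k(L)$ of length $m$ is isomorphic to $V_m$ (e.g.\ because $V_m\simeq V_\Sigma(\lambda-\rho_\Sigma+t\omega)/(t^m)$ is both projective and injective in the truncated subcategory of objects of Loewy length $\le m$), after which full faithfulness is a finite check on the $V_m$'s. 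Either repair is routine; the skeleton of your proof matches the paper's intent.
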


\subsection{Atypical blocks of $\cF_k$}
\subsubsection{} The following theorem is a direct consequence of Lemma \ref{corext}, Corollary \ref{atypicalself} and Theorem \ref{maingraph}.

\begin{thm}{thm-ext} The Ext quiver of an atypical block in $\cF_k$ coincides with  a connected component of the graph $\Gamma$
and is of the form
$$\xymatrix{\dots& \ar@/^0.4pc/[r]^{x}&
\bullet\ar@/^0.4pc/[r]^{y}\ar@/^0.4pc/[l]^{x} & \bullet \ar@/^0.4pc/[r]^{x} \ar@/^0.4pc/[l]^{y}&
\bullet\ar@/^0.4pc/[l]^{x}\ar@/^0.4pc/[r]^{y} & \bullet
\ar@/^0.4pc/[r]^{x}\ar@/^0.4pc/[l]^{y}&
\ar@/^0.4pc/[l]^{x}\dots}$$
\end{thm}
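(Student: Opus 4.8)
The plan is to assemble the three cited results into the combinatorial shape of the Ext quiver. Recall that the Ext quiver of a block has one vertex for each isomorphism class of simple object of the block, and that the number of arrows from $L$ to $L'$ is $\dim\Ext^1(L,L')$; moreover two simple modules belong to the same block precisely when they can be joined by a chain of simples with consecutive $\Ext^1$ groups non-zero. So the task splits into three parts: identify the vertices of an atypical block, count the arrows, and recognize the resulting graph.

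First I would determine the vertex set. By \Lem{corext}, for non-isomorphic simple integrable modules one has $\Ext^1(L,L')\neq 0$ if and only if $L$ and $L'$ are adjacent atypical modules. Hence the ``linked by non-vanishing $\Ext^1$'' relation restricted to atypical simples coincides exactly with the adjacency relation defining $\Gamma$, and therefore the set of simple objects of an atypical block is precisely the vertex set of one connected component $\Gamma'$ of $\Gamma$. Note that a typical simple cannot occur in such a block: by \Cor{cortyp} each typical simple spans its own one-object block, consistent with $\Ext^1$ vanishing between a typical simple and any other simple.

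Next I would install the arrows. For each edge of $\Gamma'$, that is each adjacent pair $L,L'$, \Lem{corext} gives $\dim\Ext^1(L,L')=1$, and the duality identity $\Ext^1(L,L')=\Ext^1(L',L)$ recorded in the Preliminaries shows the same value in the opposite direction; thus every edge of $\Gamma'$ contributes exactly one arrow in each direction. By \Cor{atypicalself} one has $\Ext^1(L,L)=0$ for every atypical simple, so the quiver has no loops. Consequently the Ext quiver is obtained from the underlying graph $\Gamma'$ by doubling each edge into a pair of opposite arrows.

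Finally I would read off the global shape from \Thm{maingraph}: each connected component $\Gamma'$ is a (bi-infinite) string in which every $L_i$ is adjacent to exactly $L_{i-1}$ and $L_{i+1}$. Doubling each edge and assigning the labels in the alternating pattern $x,y$ then reproduces the displayed quiver. The only non-formal inputs are the identification of ``block $=$ connected component'' in the first step (which rests entirely on \Lem{corext}) and the structural description of a component as a string in \Thm{maingraph}; granting these, the remainder is routine. The mildest point of care is matching the alternating arrow labels $x,y$ to the figure, which is pure bookkeeping rather than a genuine obstacle.
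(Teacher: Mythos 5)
Your proposal is correct and follows exactly the route the paper takes: the paper states Theorem \ref{thm-ext} is a direct consequence of Lemma \ref{corext}, Corollary \ref{atypicalself} and Theorem \ref{maingraph}, and your write-up simply makes explicit the routine assembly of these three inputs (adjacency controls non-vanishing $\Ext^1$ between distinct simples, absence of self-extensions rules out loops, and the string structure of a component of $\Gamma$ gives the displayed shape). No gaps.
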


\subsubsection{}
Let us show that the above quiver satisfies the relations $xy=yx=0$.

\begin{lem}{length3} There is no indecomposable module $M$ in $\cF_k$ such that
$M/rad M=L_1$, $rad M/rad^2M=L_2$, $rad^2M=L_3$
for pairwise non-isomorphic simple modules $L_1,L_2,L_3$.
\end{lem}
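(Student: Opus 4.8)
The plan is to argue by contradiction, converting the hypothetical uniserial module into a highest weight module and then invoking the length bound for the maximal integrable quotient of a Verma module. So suppose such an indecomposable $M$ exists, with radical layers $L_1$ (head), $L_2$ (middle) and $L_3$ (socle). The two length-two subquotients $M/\operatorname{rad}^2M$ and $\operatorname{rad}M$ are non-split extensions of $L_1$ by $L_2$ and of $L_2$ by $L_3$ respectively (each has a simple head, hence is local and indecomposable), so \Lem{corext} forces $L_1,L_2$ and $L_2,L_3$ to be pairs of adjacent atypical modules. By \Cor{onlytwo} the module $L_2$ has exactly two neighbours in the graph $\Gamma$, whence $\{L_1,L_3\}$ is precisely the set of these neighbours.

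The decisive move is to choose the set of simple roots $\Sigma$ so that the middle factor $L_2$ is $\Sigma$-regular; such $\Sigma$ exists because by \Cor{corhw} the set $HW(L_2)$ contains a regular weight, which is realized as $\rho wt_\Sigma L_2$ for some $\Sigma\in\mathcal B$. For this $\Sigma$, \Rem{order} applies to $L_2$ and its two neighbours and yields the strict chain
$$\rho wt_\Sigma L'<_\Sigma\rho wt_\Sigma L_2<_\Sigma\rho wt_\Sigma L'',\qquad \{L',L''\}=\{L_1,L_3\}.$$
Thus the three $\rho$-weights are totally ordered with $L_2$ strictly in the middle, and exactly one of $L_1,L_3$ has the strictly largest $\rho$-weight. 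Recall that the duality $\D$ fixes the isomorphism class of every simple object and interchanges the radical and socle filtrations, so $\D M$ is again an indecomposable length-three module with the three factors in the opposite order; replacing $M$ by $\D M$ if necessary, I may therefore assume that the head $L_1$ is the factor of strictly largest weight, i.e. $\lambda_1:=\rho wt_\Sigma L_1$ strictly dominates both $\rho wt_\Sigma L_2$ and $\rho wt_\Sigma L_3$.

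With this arranged, $M$ is a highest weight module: it is local with head $L_\Sigma(\lambda_1-\rho_\Sigma)$, hence generated by any weight vector $v$ of weight $\lambda_1-\rho_\Sigma$ lifting the highest weight vector of $L_1$, and since every weight of $M$ occurs in $L_1,L_2$ or $L_3$ and is therefore $\leq_\Sigma\lambda_1-\rho_\Sigma$ by the previous step, $v$ is annihilated by the positive nilradical and is a highest weight vector. Consequently $M$ is a quotient of the Verma module $M_\Sigma(\lambda_1-\rho_\Sigma)$, and being integrable it is a quotient of $V_\Sigma(\lambda_1-\rho_\Sigma)$. By \Lem{integrablequotient} the latter has length at most two, contradicting the length three of $M$. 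I expect the only genuine obstacle to be the second step, namely arranging a strictly dominant head: choosing $\Sigma$ so that the middle factor is regular is exactly what pushes the two outer factors to the extremes of the order, and together with the duality $\D$ this is what guarantees a highest weight generator. Once that is secured, the length bound for $V_\Sigma$ closes the argument immediately.
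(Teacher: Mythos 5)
Your argument is correct, but it follows a genuinely different route from the paper. The paper's proof reduces to the finite-dimensional subalgebra $\dot{\fg}=\fsl(1|n)$: using \Thm{maingraph} and \Lem{hw} it finds $\Sigma$ and $\alpha\in\Sigma$ so that $L_1^{\top},L_2^{\top},L_3^{\top}$ sit in the same $h$-eigenvalue, applies the exact functor $N\mapsto N^{\top}$, and invokes Germoni's classification of indecomposable $\dot{\fg}$-modules to rule out the corresponding length-three uniserial module downstairs. You instead stay entirely inside the affine category: you identify $\{L_1,L_3\}$ as the two neighbours of $L_2$ in $\Gamma$ via \Lem{corext} and \Cor{onlytwo}, use \Rem{order} (for a $\Sigma$ making $L_2$ regular) to totally order the three $\rho$-shifted highest weights with $L_2$ strictly in the middle, flip by the duality $\D$ so the head carries the top weight, and then run the same ``$M$ is a quotient of $V_\Sigma(\lambda_1-\rho_\Sigma)$, which has length at most two'' argument that the paper itself uses in the proof of \Lem{corext}. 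This is more self-contained (no appeal to the $?^{\top}$ reduction or to the finite-dimensional result at this point), whereas the paper's route sets up machinery it reuses immediately afterwards in \Thm{thmatyp}. Two small points you should make explicit: (a) the lift $v$ is an honest (not merely generalized) weight vector because the generalized weight space of weight $\lambda_1-\rho_\Sigma$ in $M$ is one-dimensional, the other two factors having strictly smaller highest weights; and (b) the bound ``length of $V_\Sigma(\lambda_1-\rho_\Sigma)$ is at most two'' in the case where $\lambda_1$ is regular but $L_1$ is not $\Sigma$-tame requires combining parts (iii) and (iv) of \Lem{integrablequotient} with the fact that the regular weight $\lambda_1\in HW(L_1)$ is realized by a $\Sigma'$ for which $L_1$ is $\Sigma'$-tame. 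Also note that $\D M$ has the reversed radical filtration because $M$ is uniserial (its submodule lattice is a chain), so the unique composition series is reversed by the duality.
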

\begin{proof} Assume that such module exists. Then $\{L_1,L_2,L_3\}$ generate a connected subgraph of $\Gamma$.
It follows from Theorem~\ref{maingraph} and Lemma~\ref{hw} that there exists $\Sigma$ and $\alpha\in\Sigma$ such that $L_i^{\top}$ has the same $h$-eigenvalue for
$i=1,2,3$ (see~\S~\ref{top} for the notation $h$). It was shown in \cite{G} that there is no similar indecomposable $\dot{\fg}$-module $M^{top}$ with
$M^{top}/rad M^{top}=L_1^{top}$, $rad M^{top}/rad^2M^{top}=L_2$, $rad^2M^{top}=L_3^{top}$. The statement follows.
\end{proof}

\subsubsection{}
\begin{thm}{thmatyp} Any atypical block in $\cF_k$ is equivalent to the category of finite-dimensional  representations of the quiver of Theorem~\ref{thm-ext}
with relations $xy=yx=0$.
\end{thm}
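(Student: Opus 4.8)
The plan is to identify the basic algebra $A$ of an atypical block $\mathcal C$ with the quotient $\mathbb C Q/(xy,yx)$ of the path algebra of the quiver $Q$ of \Thm{thm-ext}, and then to quote the standard fact that a $\mathbb C$-linear, $\Hom$-finite, finite-length, Krull--Schmidt category whose Ext-quiver is $Q$ and whose relation ideal is $R$ is equivalent to the category of finite-dimensional representations of $(Q,R)$. The block $\mathcal C$ has the required abstract features: it is a block of the category of finitely generated integrable modules, so every object has finite length and all $\Hom$-spaces are finite-dimensional, it is Krull--Schmidt, and by \Thm{maingraph} its simple objects are indexed by one connected component of $\Gamma$, that is, by the vertices of $Q$. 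Since each simple has only two neighbours, $A$ is locally bounded and the cited description applies; the whole problem is thus to pin down $R$.

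First I would collect the data already in hand. By \Thm{thm-ext} the Ext-quiver is $Q$, and by \Cor{atypicalself} there are no loops, so $A$ is a quotient of $\mathbb C Q$. Next I would check that $xy=yx=0$ in $A$. Each generator $xy$, $yx$ is a composite of two consecutive arrows of $Q$ running through three pairwise distinct vertices; were such a composite nonzero in $A$, it would produce an indecomposable module whose radical layers are three pairwise non-isomorphic simples $L_1,L_2,L_3$, contradicting \Lem{length3}. Hence $\mathbb C Q\twoheadrightarrow A$ factors through $\mathbb C Q/(xy,yx)$, and it remains to prove that this surjection is injective.

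The crux is this completeness statement. Since $(xy,yx)$ is a monomial ideal, $\mathbb C Q/(xy,yx)$ has as a basis the paths that never traverse two consecutive arrows in the same direction; these are generated by the arrows and by the iterated ``back-and-forth'' loops along a single edge $\{L_i,L_{i+1}\}$, all mixed products of the two loops at a vertex dying against a straight-through relation. It therefore suffices to exhibit, for each such surviving path, a module of $\mathcal C$ on which it acts as a nonzero, and indeed linearly independent, operator. Here I would use the modules of \S\ref{maxint}: the length-two maximal integrable quotients $V_\Sigma(\lambda-\rho_\Sigma)$ of \Lem{integrablequotient}(iii) realize the individual arrows (each spans the corresponding $\Ext^1=\mathbb C$ of \Lem{corext}), while the indecomposable flat deformations $V_\Sigma(\lambda-\rho_\Sigma+t\omega)/(t^p)$ of \Cor{intself}, which are filtered by $p$ copies of a fixed length-two $V_\Sigma(\lambda-\rho_\Sigma)$ and hence are supported on the single edge $\{L_i,L_{i+1}\}$ with composition series $L_i,L_{i+1},L_i,L_{i+1},\dots$, realize the iterated loop along that edge for every length while annihilating the loop along the other edge. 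As $p$ varies, these modules separate all surviving paths, so no nonzero linear combination of them vanishes in $A$; thus $\mathbb C Q/(xy,yx)\to A$ is injective and $A=\mathbb C Q/(xy,yx)$. The duality $\D$ of Section~2, which fixes every simple, matches each of these modules with its contragredient and lets one read off socles from tops, confirming that the modules so produced exhaust the indecomposables of $(Q,xy=yx=0)$.

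The main obstacle is precisely this completeness step. The difficulty is that $\mathcal C$ has no projective objects, the block having infinitely many simples while we work with finitely generated modules, so one cannot simply compute $A=\End(P)^{\mathrm{op}}$ for a projective generator and read off the relations from $\Ext^2$. Instead the absence of further relations must be extracted from an explicit and sufficiently rich supply of indecomposables; assembling these from the standard modules $V_\Sigma(\lambda)$ and their flat deformations of \Cor{intself}, and using the self-duality $\D$ to control socles, is the technical heart of the argument. The character identities of \S\ref{chformula} together with the length bound of \Lem{integrablequotient} keep the bookkeeping under control.
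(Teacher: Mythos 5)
Your first step coincides with the paper's: the relations $xy=yx=0$ hold because a nonzero composite of two consecutive arrows through three pairwise distinct vertices would yield the forbidden indecomposable of Lemma~\ref{length3}. The divergence, and the gap, lies in the completeness step. The paper does not attempt to identify the basic algebra of the block directly; it uses the relations to confine the composition factors of any indecomposable to three consecutive simples $L_i,L_{i+1},L_{i+2}$, proves that $?^{\mathrm{top}}$ (together with an exact adjoint $\Phi$ built from parabolic induction from $\fp=\fb+\dot{\fg}$) is an equivalence between this truncated subcategory and the corresponding subcategory of finite-dimensional $\fsl(1|n)$-modules, and then imports Germoni's description of the latter as representations of the quiver with relations. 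Your route bypasses this reduction entirely, and the step you substitute for it does not close.

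Concretely, the ``standard fact'' you invoke --- that a $\Hom$-finite, finite-length, Krull--Schmidt category with prescribed Ext-quiver and ``relation ideal'' is equivalent to $\mathrm{rep}(Q,R)$ --- is not available here, for exactly the reason you yourself flag: the block has no projectives, so there is no projective generator whose endomorphism algebra defines $A$, no comparison functor $\mathcal C\to\mathrm{rep}(Q,R)$ under which ``a path acts on a module'' is even meaningful, and no definition of the ideal $R$ beyond degree two. Exhibiting the modules $V_\Sigma(\lambda-\rho_\Sigma)$ and $V_\Sigma(\lambda-\rho_\Sigma+t\omega)/(t^p)$ shows that certain string- and band-type representations of $(Q,\,xy=yx=0)$ are realized in the block; once a functor is in place this would give injectivity of $\mathbb CQ/(xy,yx)\to A$, but it gives neither fullness nor essential surjectivity, i.e.\ it does not show that every object of the block, with its morphisms, is captured by $\mathrm{rep}(Q,\,xy=yx=0)$. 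To repair this you would have to construct projective covers in the truncated subcategories (which is essentially what the paper's $\Phi$, applied to projective $\dot{\fg}$-modules, accomplishes) or otherwise build the comparison functor --- and note that Lemma~\ref{length3}, which you do use, already rests on the $?^{\mathrm{top}}$ reduction to \cite{G}, so that machinery cannot be avoided in any case.
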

\begin{proof} We use the same argument as in the proof of the previous lemma. The relation $xy=yx=0$ implies that the simple subquotients of any
indecomposable module
are isomorphic to $L_i,L_{i+1},L_{i+2}$ for some $i$. Consider the full subcategory $\mathcal C$ of $\cF_k$ which contains only modules with
semisimple subquotients isomorphic to  $L_i,L_{i+1}$ or $L_{i+2}$ and let ${\mathcal C}'$ be the full subcategory of $\dot{\fg}$-modules  which contains only
modules with
semisimple subquotients isomorphic to  $L^{top}_i,L^{top}_{i+1}$ or $L^{top}_{i+2}$. We claim that the functor $?^{top}$ defines an equivalence between $\mathcal C$ and
${\mathcal C}'$. Indeed, $?^{top}$ is exact and provides a bijection on isomorphism classes of simple modules.
To construct the left adjoint functor $\Phi$ consider the parabolic subalgebra $\fp:=\fb+\dot{\fg}$ and set
$\Phi(?)$ to be the maximal quotient of $U(\fg)\otimes_{U(\fp)}?$ which lie in $\mathcal C$. We leave it to the reader to check that $\Phi$ is also exact.
Now theorem follows from the analogous results in~\cite{G} for $\dot{\fg}$.
\end{proof}

\subsection{}
Let $\cF^1_k$ be the full subcategory of $\cF$ consisting of the modules with the diagonal action of $\fh$. Using the results of~\cite{G} and the proof of~\Thm{thmatyp}
we obtain the following result.

\begin{cor}{corF1}
The typical blocks in $\cF_1^k$ are completely reducible with a unique irreducible module. Any atypical block in $\cF_k$ is equivalent to the category of finite-dimensional  representations of the quiver of Theorem~\ref{thm-ext}
with relations $xy=yx=0$ and $x^2+y^2=0$.
\end{cor}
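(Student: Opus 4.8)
The plan is to obtain both assertions by intersecting the descriptions of $\cF_k$ established above with the single extra condition defining $\cF^1_k$, namely that $\fh$ act diagonally rather than merely locally finitely. In each block the non-semisimplicity of the $\fh$-action is carried by weight-preserving endomorphisms, so imposing diagonality amounts to killing exactly those. For the typical part I would invoke \Cor{cortyp}: it identifies $\cF_k(L)$ with the category of finite-dimensional $\CC[z]$-modules on which $z-\lambda(z)$ acts nilpotently, where $z\in\fh$ is the central element of $\fgl_n$ of \S~\ref{top}. A module lies in $\cF^1_k$ precisely when $z$ acts diagonally, and for a finite-dimensional $\CC[z]$-module with $z-\lambda(z)$ nilpotent this forces $z=\lambda(z)$, i.e. a direct sum of copies of the one-dimensional module. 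Hence the typical block of $\cF^1_k$ is semisimple with the single simple object $L$.

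For the atypical part I would run the argument of \Thm{thmatyp} unchanged: the exact functor $?^{\top}$, together with its exact left adjoint $\Phi$, gives an equivalence between an atypical block of $\cF_k$ and the corresponding atypical block of finite-dimensional $\dot{\fg}=\fsl(1|n)$-modules, under which the Ext-quiver is that of \Thm{thm-ext} with relations $xy=yx=0$ (the absence of a length-three module being \Lem{length3}). It then remains only to show that imposing diagonality of $\fh$ adjoins exactly the relation $x^2+y^2=0$, and nothing more.

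The crux is the following local analysis at an indecomposable projective $P_i$ of the block, with top $L_i$ and socle $L_i$. Since each of $x$ and $y$ shifts $\rho wt_\Sigma$ by a (nonzero) odd root, the arrows $x,y$ strictly change the $\fh$-weight; the only weight-preserving radical endomorphisms of $P_i$ are therefore the two length-two loops $x^2$ and $y^2$, which span a two-dimensional space. The nilpotent part of the action of any $h\in\fh$ on $P_i$ is weight-preserving and radical, hence lies in $\CC x^2\oplus\CC y^2$; a module in this block belongs to $\cF^1_k$ if and only if the image of the whole map $\fh\to\CC x^2\oplus\CC y^2$ acts by zero on it. I would show this image is exactly the one-dimensional line $\CC(x^2+y^2)$ (the central charge $K$ and the affine Cartan acting diagonally on integrable modules, so that only the directions $x^2,y^2$ occur, and $\Ext^1(L_i,L_i)=0$ by \Cor{atypicalself} ruling out genuine self-extensions). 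Consequently the $\cF^1_k$-objects are precisely the modules annihilated by $x^2+y^2$, i.e. the representations of the quiver of \Thm{thm-ext} with relations $xy=yx=0$ and $x^2+y^2=0$; note that the orthogonal combination $x^2=-y^2$ survives as a genuine diagonal-$\fh$, non-split loop.

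The main obstacle is this last identification: verifying that the image of $\fh$ in $\CC x^2\oplus\CC y^2$ is precisely the line $\CC(x^2+y^2)$, rather than all of the plane (which would give $x^2=y^2=0$) or a different line such as $\CC(x^2-y^2)$. I would extract the exact combination by transporting Germoni's computation in~\cite{G} of how the centre of $\fgl_n$ acts on the atypical block of $\fsl(1|n)$ through the functor $?^{\top}$, with the nonvanishing of the loop guaranteed by the flat deformation and the length-increasing indecomposables produced in \Cor{intself}. This pins down both that the obstruction to diagonalizing $\fh$ is one-dimensional and that it is the symmetric combination $x^2+y^2$, completing the proof.
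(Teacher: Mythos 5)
Your proposal is correct and takes essentially the paper's route: the paper's one-sentence proof likewise rests on \Cor{cortyp} for the typical blocks and on transporting Germoni's description of the atypical blocks of finite-dimensional $\fsl(1|n)$-modules (with semisimple action of the centre $z$ of $\fgl_n$) through the functor $?^{\mathrm{top}}$ of \Thm{thmatyp}, which is exactly where you anchor the identification of the relation $x^2+y^2=0$. The only imprecision is the claim that the weight-preserving radical endomorphisms span just $\CC x^2\oplus\CC y^2$: with only $xy=yx=0$ the higher even loops $x^{2m},y^{2m}$ also survive, so one should phrase the computation of the image of the nilpotent part of $\fh$ modulo $\mathrm{rad}^{3}$ and then pass to the generated ideal (a routine Nakayama-type step that does not affect the conclusion).
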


\subsubsection{}
Retain notation of~\S~\ref{top}.
If $N$ is a $\fg$-module and $N^{top}$ is well-defined, $N$ is called {\em almost irreducible} if any nontrivial
submodule of $N$ has a non-zero intersection with $N^{top}$.

Note that $\cF^k$ is a subcategory in the category $\tilde{\CO}$
(this category is defined as the category $\CO$, but the action of $\fh$ is assumed
to be locally finite). In particular, $N^{top}$ is well-defined for any choice
of $\Sigma$ and $\alpha$.

Let $N$ be an indecomposable module. If $N$ lies in a typical block,
then $N$ is almost irreducible  for any choice of $\Sigma$ and $\alpha$.
If $N$ lies in an atypical block, then $N$ is almost irreducible for
$\Sigma,\alpha$ as in the proof of~\Thm{thmatyp}.

\section{The functor $F_x$}\label{Fx}
In this section we assume that $\fg$ is a Kac-Moody Lie  superalgebra.

Take $x\in\fg_{\ol{1}}$ satisfying $[x,x]=0$.
The following construction is due to  M.~Duflo and V.~Serganova, see~\cite{DS}.
For a $\fg$-module $N$ introduce
$$F_x(N):=Ker_N x/Im_N x.$$

Let $\fg^x$ be the centralizer of $x$ in $\fg$.
We view $F_x(N)$ as a module over $\fg^x$.
Note that $[x,\fg]\subset \fg^x$ acts trivially on $F_x(N)$ and
that $\fg_x:=F_x(\fg)=\fg^x/[x,\fg]$ is a Lie superalgebra. Thus
$F_x(N)$ is a $\fg_x$-module and $F_x$ is a functor from the category of $\fg$-modules to the category of $\fg_x$-modules.

In~\cite{DS},\cite{S1} the functor $F_x$ was studied for finite-dimensional $\fg$. However, certain properties can be easily generalized to the affine case.
In particular, $F_x$ is a tensor functor, i.e. there is a canonical isomorphism $F_x(N_1\otimes N_2)\simeq F_x(N_1)\otimes F_x(N_2)$.

\subsection{}
\begin{prop}{gx} Let $\fg=\dot{\fg}^{(1)}$ be the affinization of a Lie superalgebra $\dot{\fg}$ and assume that $x\in\dot{\fg}$. If $\dot{\fg_x}\neq 0$, then
$\fg_x$ is the affinization of $\dot{\fg}_x$, If $\dot{\fg}_x=0$ then $\fg_x$ is the abelian two-dimensional Lie algebra generated by $K$ and $d$.
\end{prop}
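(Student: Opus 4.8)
The plan is to compute $\fg_x = \fg^x/[x,\fg]$ directly from the affine structure $\fg = \dot{\fg}[t,t^{-1}] \oplus \CC K \oplus \CC d$, using that $x\in\dot{\fg}$ lies in the degree-zero component. First I would fix the grading by the loop variable $t$, so that $[x,\cdot]$ preserves degree and everything decomposes as a direct sum over $n\in\ZZ$. Writing an element of $\dot\fg[t,t^{-1}]$ as $\sum_n y_n t^n$ with $y_n\in\dot\fg$, the centralizer condition $[x,\sum_n y_n t^n]=0$ becomes $[x,y_n]=0$ for every $n$ (the central term $K$ contributes only through $(x,y)$-pairings which vanish since $x$ is odd and $[x,x]=0$ forces $(x,x)=0$; $d$ commutes with $x$). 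Hence $\fg^x = \dot{\fg}^x[t,t^{-1}] \oplus \CC K \oplus \CC d$, and likewise $[x,\fg] = [x,\dot\fg][t,t^{-1}]$ since $[x,\dot\fg]$ sits in degree zero of the loop variable and is spread across all powers of $t$. Taking the quotient degree by degree gives $\fg_x = (\dot\fg^x/[x,\dot\fg])[t,t^{-1}] \oplus \CC K \oplus \CC d = \dot\fg_x[t,t^{-1}]\oplus\CC K\oplus \CC d$.

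Next I would split into the two cases. If $\dot\fg_x\neq 0$, the computation above literally exhibits $\fg_x$ as the loop algebra of $\dot\fg_x$ together with $K$ and $d$; to conclude it is the affinization $\dot\fg_x^{(1)}$ I must check that the restriction of the invariant form and of the central extension cocycle to $\fg_x$ agree with the canonical ones on $\dot\fg_x^{(1)}$. This is where I expect to invoke that $F_x$ respects the bilinear form: the induced form on $\dot\fg_x$ is the one descending from $(\cdot,\cdot)$ on $\dot\fg$, and the affine cocycle $(y_m,z_n)\mapsto m\,\delta_{m+n,0}(y,z)$ descends compatibly because $[x,\fg]$ is isotropic for it. If instead $\dot\fg_x=0$, then the loop part collapses entirely and $\fg_x = \CC K\oplus\CC d$; since $[K,d]=0$ this is the two-dimensional abelian Lie algebra, giving the second assertion.

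The main obstacle I anticipate is the bookkeeping around the central element $K$ and the derivation $d$: one must verify that neither acquires a relation in the quotient (i.e. that $K,d\notin[x,\fg]$, which is clear since $[x,\fg]$ lies in the loop part, but should be stated), and that the Lie bracket induced on $\fg_x$ really is the affine bracket rather than some twisted version. Concretely, the subtle point is confirming that the cocycle defining the central extension of $\dot\fg_x[t,t^{-1}]$ inside $\fg_x$ coincides with the standard one; I would handle this by choosing representatives $y_n t^n$ with $y_n\in\dot\fg^x$ projecting to a basis of $\dot\fg_x$ and computing $[y_m t^m, z_n t^n]$ in $\fg^x$ modulo $[x,\fg]$, checking the $K$-coefficient is $m\,\delta_{m+n,0}(\bar y,\bar z)$ for the descended form. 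Everything else is the routine degree-wise linear algebra sketched above.
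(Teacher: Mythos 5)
Your argument is correct and is essentially the paper's own proof: the paper likewise decomposes $\fg=\mathbb{C}d\oplus\mathbb{C}K\oplus\bigoplus_{n}\dot{\fg}\otimes t^n$ and applies $F_x$ degree by degree, using that each $\dot{\fg}\otimes t^n$ is a copy of the adjoint representation; your extra checks on the cocycle and on $K,d\notin[x,\fg]$ are the details the paper leaves implicit. One tiny correction: the $K$-component of $[x,yt^n]$ vanishes not because $(x,y)=0$ but because the cocycle coefficient is $m\,\delta_{m+n,0}$ and $x$ sits in loop degree $m=0$.
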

\begin{proof} Since $$\fg=\mathbb Cd\oplus\mathbb CK\oplus\bigoplus_{n\in \mathbb Z}\dot{\fg}\otimes t^n$$
and $\dot{\fg}\otimes t^n$ is isomorphic to the adjoint representation of $\dot{\fg}$ for every $n$, the statement follows.
\end{proof}

\subsection{}
Let $\fg=\dot{\fg}^{(1)}$ be the affinization of a Lie superalgebra $\dot{\fg}$ and assume that $x\in\dot{\fg}$.
Let $\dot{\Sigma}$ (resp., $\Sigma$) be the set of simple roots of $\dot{\fg}$
(resp., $\fg$).

Let $\beta_1,\dots\beta_r\in\dot{\Sigma}$ be a set of mutually orthogonal isotopic simple roots, fix  non-zero root vectors
$x_i\in\fg_{\beta_i}$ for all $i=1,\dots,r$. Let $x:=x_1+\dots+x_r$. It is shown in ~\cite{DS} that $\dot{\fg}_x$ is a finite-dimensional Kac-Moody superalgebra with
roots
$$\dot{\Delta}^{\perp}:=\{\alpha\in\dot{\Delta}|\ (\alpha,\beta_i)=0, \alpha\not=\pm\beta_i\, i=1,\dots,r\}$$
and the Cartan subalgebra
$$\fh_x:=(\beta_1^\perp\cap\dots\cap\beta_r^\perp)/(\mathbb C h_{\beta_1}\oplus\dots\oplus \mathbb C h_{\beta_r}).$$
Assume that $\dot{\Delta}^{\perp}$ is not empty, then
$\dot{\Delta}^{\perp}$ is the root system of the Lie superalgebra $\dot{\fg}_x$.
One can choose a set of simple roots $\dot{\Sigma}_x$ such that $\Delta^+(\dot{\Sigma}_x)=\Delta^+\cap \dot{\Delta}^{\perp}$.
Let $\fg_x\subset\fg$ be the affinization of  $\dot{\fg}_x$:
the affine Lie superalgebra
with a set of simple roots $\Sigma_x$ containing $\dot{\Sigma}_x$
such that $\Delta^+({\Sigma}_x)\subset\Delta^+$.

For example, if $\dot{\fg}=A(m|n), B(m|n)$ or $D(m|n)$, then $\dot{\fg}=A(m-r|n-r), B(m-r|n-r)$ or $D(m-r|n-r)$.
If $\dot{\fg}=C(n)$, $G_3$ or $F_4$, then $r=1$ and $\dot{\fg_x}$ is the Lie algebra of type $C_{n-1}$, $A_1$ and $A_2$ respectively.
If  $\dot{\fg}=D(2,1;\alpha)$, then $r=1$ and $\fg_x=\mathbb C$.

\subsection{}
\begin{prop}{lemCasimir}
Let $\fg=\dot{\fg}^{(1)}$ be the affinization of a Lie superalgebra $\dot{\fg}$ and assume that $x\in\dot{\fg}$.
Let $x\in\dot{\fg}$ and $N$ be a restricted $\fg$-module.
If the Casimir  element $\Omega_{\fg}$ acts on a $N$
by a scalar $C$, then the Casimir element $\Omega_{\fg_x}$  acts on the $\fg_x$-module
$F_x(N)$ by the same scalar $C$.
\end{prop}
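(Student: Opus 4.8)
The plan is to deduce the statement from the tensor–functoriality of $F_x$ together with the fact, recorded just before \Prop{gx}, that the invariant form of $\fg_x$ is the one induced from $\fg$. First I would realize the quadratic Casimir operator $\Omega_\fg$ on the restricted module $N$ as a composite of morphisms of $\fg$-modules,
$$N=\CC\otimes N\xrightarrow{\ \Omega^{(2)}_\fg\otimes\id_N\ }\fg\,\hat\otimes\,\fg\otimes N\xrightarrow{\ \text{act}\ }N,$$
where $\Omega^{(2)}_\fg=\sum_i u_i\otimes u^i$ is the canonical invariant element attached to the form (written through dual bases $(u_i,u^j)=\delta_i^j$ and completed along the loop grading, so that it acts on restricted modules) and the second arrow is the iterated action. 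Each arrow is a $\fg$-module map: invariance of the form makes $\Omega^{(2)}_\fg$ a morphism $\CC\to\fg\,\hat\otimes\,\fg$, while the action map $\fg\otimes N\to N$ is a morphism of $\fg$-modules precisely by the module axiom. Thus $\Omega_\fg\in\End_\fg(N)$.

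Next I would apply $F_x$ to this factorization. Since $F_x$ is a tensor functor with $F_x(\fg)=\fg_x$ and $F_x(\CC)=\CC$, it carries the iterated action map of $\fg$ on $N$ to the iterated action map of $\fg_x$ on $F_x(N)$, and it carries $\Omega^{(2)}_\fg$ to an invariant element $F_x(\Omega^{(2)}_\fg)\in\fg_x\,\hat\otimes\,\fg_x$. Consequently $F_x(\Omega_\fg)$ is exactly the operator on $F_x(N)$ built from $F_x(\Omega^{(2)}_\fg)$ via the $\fg_x$-action.

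The key step is then to identify $F_x(\Omega^{(2)}_\fg)$ with the Casimir tensor $\Omega^{(2)}_{\fg_x}$. Here I would use the description of $\fg_x=\dot{\fg}_x^{(1)}$ from the paragraph before \Prop{gx}: the invariant form of $\fg$ descends to the homology $\fg^x/[x,\fg]$ and there coincides with the invariant form of $\fg_x$ (the loop pairing $(a\otimes t^m,b\otimes t^{-m})=(a,b)_{\dot\fg}$ and the $d$–$K$ pairing restrict verbatim, and the induced form on $\dot{\fg}_x$ is its invariant form). Choose dual bases of $\fg$ adapted to the decomposition $\fg=\im(\ad x)\oplus\mathcal H\oplus\mathcal C$, where $\mathcal H\cong\fg_x$ is self-paired and $\im(\ad x),\mathcal C$ form a hyperbolic pair. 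Under the Künneth isomorphism $F_x(\fg\,\hat\otimes\,\fg)\simeq\fg_x\,\hat\otimes\,\fg_x$, the class of $\Omega^{(2)}_\fg$ is $\sum_\alpha[h_\alpha]\otimes[h^\alpha]$: every term with a tensor factor in $\im(\ad x)=[x,\fg]$ dies because $F_x$ kills $[x,\fg]$, so all $\im$–$\mathcal C$ cross terms vanish and only the $\mathcal H$-part survives. Since $\{[h_\alpha]\}$ is a basis of $\fg_x$ dual to $\{[h^\alpha]\}$ for the induced form, this is precisely $\Omega^{(2)}_{\fg_x}$. Hence $F_x(\Omega_\fg)$ equals $\Omega_{\fg_x}$ acting on $F_x(N)$; as $\Omega_\fg=C\cdot\id_N$ and $F_x$ is additive, $F_x(\Omega_\fg)=C\cdot\id_{F_x(N)}$, giving $\Omega_{\fg_x}=C\cdot\id_{F_x(N)}$.

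The main obstacle I expect is the affine/infinite nature of $\Omega^{(2)}_\fg$: it is not an element of $\fg\otimes\fg$ but a Sugawara-type series, together with the $d$- and $K$-contributions. One must verify that $\Omega^{(2)}_\fg$ is genuinely a morphism into a completion $\fg\,\hat\otimes\,\fg$ acting on restricted modules, and that the canonical isomorphism $F_x(N_1\otimes N_2)\simeq F_x(N_1)\otimes F_x(N_2)$ is compatible with this completion so that $F_x$ may be applied termwise (equivalently, that $\Omega^{(2)}_\fg\in\Ker_{\fg\,\hat\otimes\,\fg}x$ and its homology class is computed degreewise). I regard this completion bookkeeping as the technical heart of the argument; once it is in place, the conceptual identity $F_x(\Omega^{(2)}_\fg)=\Omega^{(2)}_{\fg_x}$ forces the eigenvalue to be preserved, with the vanishing of all cross terms being automatic since $F_x$ annihilates $[x,\fg]$.
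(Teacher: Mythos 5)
Your proposal is correct and is essentially the paper's own argument: the paper likewise splits $\dot{\fg}=\dot{\fg}_x\oplus\fm$ with $\fm$ a free $\mathbb{C}[x]$-module and kills the complementary part of the Casimir by observing that the $x$-invariant canonical element of the free module $\fm\otimes\fm$ (resp.\ $S^2(\fm)$ in degree $0$) lies in $[x,U(\fg)]$, which is exactly your K\"unneth/acyclicity step in a different wrapper. The completion issue you flag as the "technical heart" is resolved in the paper by working with the normally ordered expression $\Omega_{\fg}=2(h^{\vee}+K)d+\Omega_0+2\sum_{i>0}\Omega(i)$ degree by degree: since $x\in\dot{\fg}$ preserves the loop grading, one only needs $x$-invariance of each finite piece $\Omega(i)$ (not $\fg$-invariance of a completed tensor, which normal ordering in fact destroys -- that failure is precisely the $2(h^{\vee}+K)d$ term), so no compatibility of $F_x$ with a completed tensor square has to be established.
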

\begin{proof} Let us write the Casimir element $\Omega_{\fg}$ in the following form (see~\cite{K3}, (12.8.3))
$$\Omega_{\fg}=2(h^{\vee}+K)d+\Omega_0+2\sum_{i=1}^\infty \Omega(i),$$
where $\Omega(i)=\sum v_j v^j$ for some basis $\{v_j\}$ in $\dot{\fg}\otimes t^{-i}$ and the dual basis  $\{v^j\}$ in $\dot{\fg}\otimes t^{i}$.
Similarly we have
$$\Omega_{\fg_x}=2(h^{\vee}+K)d+\Omega_0+2\sum_{i=1}^\infty \Omega_x(i).$$
We claim that $\Omega_x(i)\equiv \Omega(i)(\operatorname{mod}[x,U(\fg)])$.
Indeed, we use the decomposition $\dot{\fg}=\dot{\fg}_x\oplus \fm$, where $\fm$ is a free $\mathbb C[x]$-module.
Using a suitable choice of bases we can write
$$\Omega(i)=\Omega_x(i)+\sum u_s u^s$$
for the pair of dual bases $\{u_s\}$ in $\fm\otimes t^{-i}$ and   $\{u^s\}$ in $\fm\otimes t^{i}$.
If $i>0$, then $\sum u_s u^s$ is $x$-invariant element via the embedding $\fm\otimes\fm\hookrightarrow U(\fg)$.
If $i=0$, then $\sum u_s u^s$ is $x$-invariant element via the embedding $S^2(\fm)\hookrightarrow U(\fg)$.
Since $\fm\otimes\fm$ and $S^2(\fm)$ are free $\mathbb C[x]$-modules, we obtain in both cases that $\sum u_s u^s$ lies in the image of $\operatorname{ad} x$.

Now the statement follows immediately from the fact that $[x,U(\fg)]$ annihilates $F_x(N)$.
\end{proof}

\section{Invariants of simple objects in the same  block}
Now let $\fg=\fsl(1|n)^{(1)}$ with $n>2$. Take a non-zero
$x\in \fg_{\beta}$, where
$\beta$ is an odd isotropic root; then $[x,x]=0$.

In this section we will show that for an irreducible modules $L,L'\in\cF_k$ one has

(i)  $F_x(L)=0$ if and only if $L$ is typical;

(ii) if $L$ is atypical, then  $F_x(L)\cong F_x(L')$ if and only if $L$ and $L'$ lie in the same block.

\subsection{}\label{Sigmax}
Fix a set of simple roots $\Sigma$; let $\alpha_1,\alpha_2\in\Sigma$  be odd roots.
Since for any odd root $\beta$ the orbit $W\beta$ contains either $\alpha_2$ or $-\alpha_2$, we may assume that $x\in\fg_{\alpha_2}$ or $x\in\fg_{-\alpha_2}$.
Then $\fg_x\cong \mathfrak{sl}_{n-1}^{(1)}$ with the set of simple roots
$$\Sigma_x:=\{\alpha_0,\alpha_1+\alpha_2+\alpha_3,\alpha_4,\ldots,\alpha_n\}.$$

Recall that, by~\Lem{integrable},  a Verma module $M(\lambda)$ has at most two integrable quotients: $L(\lambda)$ and $N$ such that $N/L(\lambda-\beta)=L(\lambda)$.

\subsection{}
\begin{prop}{propaga}
Let $L$ be an irreducible typical integrable highest weight module.
Then $F_x(L)=0$ for any non-zero $x\in\fg_{\beta}$,
where $\beta$ is an odd isotropic root.
\end{prop}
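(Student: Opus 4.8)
The plan is to show that a typical $L$ is \emph{free} as a module over the exterior subalgebra $\Lambda(x)\subset\Ug$ generated by $x$ (here $x^2=0$ since $[x,x]=0$): for any module that is free over $\Lambda(x)$ the two-term complex $(\cdot,x)$ is exact, so $F_x$ vanishes on it. Concretely, if a module $K$ is free over $\Lambda(x)$, write $K\simeq\Lambda(x)\otimes U$; then $\Ker_K x=xU=\im_K x$, whence $F_x(K)=0$.

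First I would arrange that $x$ is a \emph{negative} odd root vector. For typical $L$ the weight $\rho wt_\Sigma L$ and the maximal integrable quotient $V_\Sigma(\rho wt_\Sigma L-\rho_\Sigma)=L$ are independent of $\Sigma$ (Lemma~\ref{integrablequotient}(i),(iv) together with the $\Sigma$-independence of $\rho wt_\Sigma L$ noted after the Definitions). Hence I may replace $\Sigma$ by the unique element of $\cB$ containing the odd root $-\beta$ (see~\S\ref{uniqueSigma}); then $\beta\in-\Delta^+(\Sigma)$, so $x\in\fg_{\beta}\subset\fn^-$, consistent with the choice $x\in\fg_{-\alpha_2}$ of~\S\ref{Sigmax}. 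Write $L=L_\Sigma(\lambda)$ for this $\Sigma$.

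The core step is to identify $L$ with the Kac module $K:=U(\fg)\otimes_{U(\fp)}E$, where $\fp:=\fg_{\ol0}\oplus\fg^+_{\ol1}$ and $E$ is the even integrable highest weight $\fg_{\ol0}$-module of highest weight $\lambda$, with $\fg^+_{\ol1}$ acting by zero on its highest line. By PBW one has $K\simeq\Lambda(\fn^-_{\ol1})\otimes E$ as $\Lambda(\fn^-_{\ol1})$-modules, so $K$ is free over $\Lambda(\fn^-_{\ol1})$, hence over $\Lambda(x)$. Its character is
\[
\ch K=\ch E\cdot\prod_{\gamma\in\Delta^+_{\ol1}}(1+e^{-\gamma}),
\]
with $\ch E$ the even Weyl--Kac character. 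On the other hand, the typical character formula of Lemma~\ref{integrablequotient}(iii) (see also~\S\ref{chformula}) gives exactly this factorization for $\ch L$. Since $K$ is a quotient of $M_\Sigma(\lambda)$, it carries the tautological surjection $K\twoheadrightarrow L$ onto the unique simple quotient; the equality $\ch K=\ch L$ then forces $K\simeq L$. Thus $L$ is free over $\Lambda(x)$ and $F_x(L)=\Ker_L x/\im_L x=0$.

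The main obstacle is this middle step: promoting the character-level factorization to the module-level isomorphism $K\simeq L$. This requires setting up the induced module $K$ and the surjection $K\twoheadrightarrow L$ correctly in the affine setting, confirming that the factor $\ch E$ produced by the typical formula is the character of the correct even integrable $\fg_{\ol0}$-module, and matching the odd factor $\prod_{\gamma\in\Delta^+_{\ol1}}(1+e^{-\gamma})$ coming from $\Lambda(\fn^-_{\ol1})$. Once $K\simeq L$ is established, the reduction to the negative-root case and the final implication ``free over $\Lambda(x)\Rightarrow F_x=0$'' are both routine.
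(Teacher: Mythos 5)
Your outer reductions are fine: ``free over $\Lambda(x)$ implies $F_x=0$'' is correct, and moving to a $\Sigma$ with $-\beta\in\Sigma$ is harmless. But the central step --- realizing a typical integrable $L$ as a Kac module $K=U(\fg)\otimes_{U(\fp)}E$ with $\fp=\fg_{\ol 0}\oplus\fg^+_{\ol 1}$ --- breaks down in the affine setting, and this is fatal. If $\fg^+_{\ol 1}$ means the span of the positive odd root spaces for the triangular decomposition, then $\fp$ is \emph{not a subalgebra}: with $\Sigma$ as in (\ref{sigma}), take the positive odd root $\gamma=\varepsilon_n$ and the positive even root $\alpha=\varepsilon_n-\varepsilon_1+\delta\in\Pi_0$; then $[\fg_{-\alpha},\fg_{\gamma}]=\fg_{\varepsilon_1-\delta}$ is a \emph{negative} odd root space, so $[\fg_{\ol 0},\fg^+_{\ol 1}]\not\subset\fg_{\ol 0}\oplus\fg^+_{\ol 1}$ and the induction is undefined. (Equivalently: the Dynkin diagram of $\fsl(1|n)^{(1)}$ is a cycle, and no proper parabolic contains all of $\fg_{\ol 0}$; the distinguished $\mathbb Z$-grading of $\fsl(1|n)$ that makes Kac modules work in the finite-dimensional case does not interact correctly with the affine triangular decomposition.) If instead you use the $\mathbb Z$-grading inherited from $\dot{\fg}=\dot{\fg}_{-1}\oplus\dot{\fg}_0\oplus\dot{\fg}_1$, then $\fp$ \emph{is} a subalgebra, but $\fg^{(-1)}=\dot{\fg}_{-1}\otimes\mathbb C[t,t^{-1}]$ has weights $-\gamma+k\delta$ for all $k\in\mathbb Z$, so $\Lambda(\fg^{(-1)})$ (hence $K$) has infinite-dimensional weight spaces and cannot be isomorphic to $L$. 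Either way there is no module-level object realizing the factorization, and the character identity you invoke also needs care (the even factor of the Kac--Weyl formula carries the super Weyl vector $\rho$, not $\rho_{\ol 0}$, so it is not on its face the character of the integrable $\fg_{\ol 0}$-module of highest weight $\lambda$).

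The paper's proof avoids all of this by arguing directly on a putative nonzero $F_x(L)$: one picks a weight vector $v\in L$ of weight $\nu$ whose image is a $\fg_x$-highest weight vector of $F_x(L)$, uses $\fg_x$-integrability of $F_x(L)$ together with the compatibility of Casimir eigenvalues under $F_x$ (Proposition \ref{lemCasimir}) to get $\|\lambda'\|^2=\|\lambda\|^2$ for a suitable dominant $\lambda'$ with $\lambda-\lambda'\in\mathbb Z_{\geq 0}\Sigma$, and then a positivity argument forces $\lambda=\lambda'$, contradicting $(\lambda',\alpha_2)=0$ and typicality. If you want to rescue a homological argument, you would need a genuinely different source of $\Lambda(x)$-freeness than Kac modules; as written, the proposal has a gap that cannot be patched locally.
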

\begin{proof}
Set $\lambda:=\rho wt_{\Sigma} L$; since $L$ is typical, $\lambda$ does not depend
on $\Sigma$. Let $\Sigma=\{\alpha_i\}_{i=0}^n$ and $\alpha_1,\alpha_2$ are odd.
Take $x\in\fg_{\pm\alpha_2}$.
Assume that $F_x(L)\not=0$.

 Let $v\in L$ be a preimage of a highest weight vector in $F_x(L)$;
we can choose $v$ to be a weight vector of weight $\nu$. Then $(\nu,\alpha_2)=0$.
Therefore $(\lambda,\alpha)\not\in\mathbb{Z}$ for each $\alpha\in \Sigma$.

By~\Lem{positiveSigma}, we can (and will)
assume that $(\lambda,\alpha)>0$ for each $\alpha\in\Sigma$.
Set $\rho:=\rho_{\Sigma}$ and $a_i:=(\nu+\rho,\alpha_i)$ for $i=0,\ldots,n$.

Since $F_x(L)$ is $\fg_x$-integrable, and
$$\Pi_x=\{\alpha_0,\alpha_1+\alpha_2+\alpha_3,\alpha_4,\ldots,\alpha_n\},$$
one has
\begin{equation}\label{nuii}
a_2=0,\ \ a_1+a_3\geq 0,\ \ a_i>0\ \text{ for } i\not=1,2,3.
\end{equation}

Set $\ \ \lambda':=\nu+\rho-a_1\alpha_2,\ \ \ \mu:=\lambda-\lambda'$.

One has $(\lambda',\alpha_i)=0$ for $i=1,2$ and
 $(\lambda',\alpha_i)\geq 0$
for $i=0,\ldots,n$.

Write $\lambda-\rho-\nu=\sum_{i=0}^n k_i\alpha_i$. Then $k_i\geq 0$
for each $i$ (since $v\in L(\lambda-\rho)$). Since
$a_1=(\lambda,\alpha_1)+k_0-k_2$, one has $k_2+a_1>0$.
Therefore
$$\mu\in\mathbb{Z}_{\geq 0}\Sigma.$$

By~\Prop{lemCasimir},  $(\nu+2\rho_x,\rho_x)=
||\lambda||^2-||\rho^2||$.
One readily sees that $2(\rho-\rho_x)=(n-2)\alpha_2$,
so $||\rho||^2=||\rho_x||^2$ and
$||\nu+\rho_x||^2=||\nu+\rho||^2$.

 This gives
$||\lambda'||^2=||\lambda||^2$, that is
$$(\lambda,\mu)+(\lambda',\mu)=0.$$
Since $(\lambda,\alpha_i)>0$ and $(\lambda',\alpha_i)\geq 0$ for each $i=0,\ldots,n$,
we obtain $\lambda=\lambda'$. However, $(\lambda',\alpha_2)=0$,
a contradiction.
\end{proof}

\subsection{}
\begin{prop}{propMx2}
Let $N$ be an integrable quotient of an atypical Verma module $M(\lambda)$.

(i)  $F_x(N)\cong L_{\fg_x}(\lambda|_{\fh_x})^{\oplus s}$, where $s=1$ if $N=L(\lambda)$
 and $s=0$ or $s=2$ otherwise.

(ii) Let $(\lambda,\beta)=0$ for an isotropic simple root $\beta$. Then
$$F_x(N)\cong L_{\fg_x}(\lambda|_{\fh_x})^{\oplus s}\text{ where }
\left\{\begin{array}{ll}
s=1& \text{ if }N=L(\lambda),\\
s=0& \text{ if }x\in\fg_{-\beta},\ N\not=L(\lambda),\\
s=2& \text{ if }x\in\fg_{\beta},\ N\not=L(\lambda).
\end{array}\right.$$
\end{prop}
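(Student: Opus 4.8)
The plan is to use that $[x,x]=0$ gives $x^2=0$, so $x$ is an odd differential on every $\fg$-module and $F_x(N)=H(N,x)$ is its cohomology; in particular every short exact sequence of $\fg$-modules produces a six-term exact hexagon of $\fg_x$-modules. By the recollection after \S\ref{Sigmax} (\Lem{integrable}), the only integrable quotients of the atypical $M(\lambda)$ are $L(\lambda)$ itself and the length-two module $N$ fitting into
$$0\to L(\lambda-\beta)\to N\to L(\lambda)\to 0,$$
where $\beta$ is the isotropic simple root with $(\lambda,\beta)=0$ and $\mu:=\lambda-\rho_\Sigma$ is the highest weight. So the whole statement reduces to computing $F_x$ on a simple atypical module and then reading off the connecting behaviour in this hexagon.

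First I would establish the case $N=L(\lambda)$, namely $F_x(L(\lambda))\cong L_{\fg_x}(\lambda|_{\fh_x})$ with multiplicity one (this is the $s=1$ assertion). Taking $\alpha=\alpha_0\in\Sigma$ distinct from $\beta$, the element $x\in\fg_{\pm\alpha_2}$ has $h$-weight zero, hence preserves the $h$-grading of \S\ref{top}; this yields the compatibility $F_x(N)^{\top}=F_x(N^{\top})$, where $N^{\top}$ is a finite-dimensional $\dot\fg=\fsl(1|n)$-module and the right-hand $F_x$ is the Duflo--Serganova functor of $\dot\fg$. The $\top$-part of $F_x(L)$ is thus governed by the finite-dimensional theory of \cite{DS},\cite{S1},\cite{G}, which gives a single simple $\dot\fg_x=\fsl_{n-1}$-module. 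Since $F_x(L)$ lies in category $\mathcal{O}$ for $\fg_x=\fsl_{n-1}^{(1)}$ and is integrable, it is completely reducible; an integrable highest weight module over an affine Lie algebra is irreducible, and \Prop{lemCasimir} keeps all constituents in one central character, so $F_x(L)$ is the single simple module $L_{\fg_x}(\lambda|_{\fh_x})$.

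For $N$ of length two I apply the hexagon to the displayed sequence. Because $h_\beta\in\beta^\perp$ and $(\lambda,\beta)=0$ one has $\beta|_{\fh_x}=0$, hence $(\lambda-\beta)|_{\fh_x}=\lambda|_{\fh_x}$ and both outer terms equal $S:=L_{\fg_x}(\lambda|_{\fh_x})$. Exactness then forces every composition factor of $F_x(N)$ to be a copy of $S$, so complete reducibility gives $F_x(N)\cong S^{\oplus s}$, where $s$ is the dimension of the top $\fg_x$-weight space. That space is the cohomology $H(V,x)$ of the two-dimensional space $V:=N_\mu\oplus N_{\mu-\beta}=\CC v\oplus\CC w$, with $v$ the highest weight vector and $w$ the singular vector generating the socle $L(\lambda-\beta)$. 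As $x^2=0$ on the two-dimensional $V$, $\dim H(V,x)\in\{0,2\}$, which already yields $s\in\{0,2\}$ and proves (i). For (ii) I compute $x|_V$ explicitly: if $x\in\fg_{\beta}$ then $xv=0$ and $xw\in L(\lambda-\beta)\cap N_\mu=0$, so $x|_V=0$ and $s=2$; if $x\in\fg_{-\beta}$ then, since $(\mu,\beta)=(\lambda,\beta)=0$, the vector $xv=f_\beta v$ is exactly the singular vector $w$, so $x|_V$ has rank one and $s=0$. When $N=L(\lambda)$ one has $N_{\mu-\beta}=0$, so $V=\CC v$ and $s=1$, recovering the first line.

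The main obstacle is the foundational simple-module computation together with its affine bookkeeping: proving the compatibility $F_x(N)^{\top}=F_x(N^{\top})$, the integrability of $F_x(N)$ over $\fg_x$, and --- most delicately --- the multiplicity-one/irreducibility of $F_x(L)$, i.e.\ the absence of constituents of lower $h$-weight not detected by the $\top$-functor. The natural tool for the last point is the Euler-characteristic identity for the complex $(N,x)$, which equates the $\fh_x$-supercharacter of $N$ with that of $F_x(N)$; combined with the $\top$-computation and complete reducibility it pins the multiplicity. One should also fix the meaning of $\lambda|_{\fh_x}$ (canonical precisely when $(\lambda,\alpha_2)=0$), reducing the general atypical case of (i) to a set of simple roots $\Sigma$ at which $L$ is tame along $\alpha_2$. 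Once these are in place, the hexagon and the two-dimensional cohomology computation are routine.
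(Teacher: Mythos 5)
Your overall skeleton agrees with the paper's: the reduction of (i) to (ii) by passing to a set of simple roots along which the module is tame, the complete reducibility of the $\fg_x$-integrable module $F_x(N)$, and the final computation of the multiplicity $s$ from the two-dimensional space $N_{\lambda}\oplus N_{\lambda-\beta}$ viewed as a module over the copy of $\fsl(1|1)$ generated by $\fg_{\pm\beta}$ (trivial module when $N=L(\lambda)$, Verma module of highest weight zero otherwise, giving $s=1$ resp.\ $s=0,2$) is exactly the paper's endgame. The gap is in the pivotal intermediate claim that \emph{every} constituent of $F_x(N)$ is a copy of $S=L_{\fg_x}(\lambda|_{\fh_x})$, equivalently that $F_x(L)$ is irreducible in the simple case. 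Your stated deduction --- $F_x(L)^{\top}=F_x(L^{\top})$ is a single simple $\fsl_{n-1}$-module, $F_x(L)$ is completely reducible, and \Prop{lemCasimir} fixes the Casimir eigenvalue, hence $F_x(L)$ is simple --- is a non sequitur: nothing so far excludes constituents $L_{\fg_x}(\nu)$ with $\nu\neq\lambda|_{\fh_x}$, the same value of $\|\nu+\rho_x\|^2$, and top in a lower $h$-eigenspace. You flag this yourself and propose the $\fh_x$-supercharacter Euler identity as the fix, but that tool is not free in the affine setting: one must first show that the $\fh_x$-weight spaces of $N$, which are sums over $\beta$-strings $\bigoplus_n N_{\nu+n\beta}$, are finite-dimensional, and one must then actually evaluate the restricted supercharacter of $N$ and invoke linear independence of characters of integrable $\fsl_{n-1}^{(1)}$-modules --- a long detour, none of which is carried out.

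The paper closes exactly this gap with a short direct argument whose ingredients you already have. If $v\in\Ker_N x$ of weight $\lambda-\mu$ projects to a $\fg_x$-singular vector of $F_x(N)$, then $(\lambda-\mu,\beta)=0$ (otherwise $h_\beta=e_\beta f_\beta+f_\beta e_\beta$ acts invertibly on that weight space and $v\in\im x$), whence $(\mu,\beta)=0$ and $\mu\in\mathbb{Z}_{\geq 0}\Sigma_x+\mathbb{Z}\beta$. \Prop{lemCasimir} gives $\|\lambda+\rho-\mu\|^2=\|\lambda+\rho\|^2$, i.e.\ $(\lambda+\rho,\mu)+(\lambda+\rho-\mu,\mu)=0$; integrability of $N$ together with $(\lambda,\beta)=0$ makes the first summand nonnegative, and $\fg_x$-integrability of $F_x(N)$ together with the shape of $\mu$ makes the second nonnegative, with equality only for $\mu\in\mathbb{Z}\beta$. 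Hence $\mu\in\{0,\beta\}$ and every constituent is $S$ --- for \emph{any} integrable quotient $N$ at once, so no hexagon, no finite-dimensional Duflo--Serganova input, and no character identity are needed. With that lemma in place your multiplicity computation goes through verbatim and yields the stated values of $s$.
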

\begin{proof}
By~\ref{maxint},
$M(\lambda)=M_{\Sigma'}(\lambda')$, where $(\lambda',\alpha)=0$ for some isotropic
$\alpha\in\Sigma'$. Thus for (i) we
can assume that $(\lambda,\beta)=0$ for an isotropic simple root $\beta$.
By above, we have $F_x(N)=F_y(N)$, where
$y$ in $\fg_{\beta}$ or
in $\fg_{-\beta}$. Therefore (i) is reduced to (ii). Let us prove (ii).
Clearly, $F_x(N)$ is $\fg_x$-integrable, so completely reducible.
Assume that $Ker_x N$ contains a vector $v$ of weight $\lambda-\mu$
whose image in $F_x(N)$ is a $\fg_x$-singular vector. Since $v\in Ker_x N$
and $v\not\in xN$, one has $(\lambda-\mu,\beta)=0$, that is $(\mu,\beta)=0$.
Since $\mu\in\mathbb{Z}_{\geq 0} \Sigma$, we obtain
$\mu\in\mathbb{Z}_{\geq 0} \Sigma_x+\mathbb{Z}\beta$.

Using~\Lem{lemCasimir} we get
$||\lambda+\rho-\mu||^2=||\lambda+\rho||^2$, that is
$(\lambda+\rho,\mu)+(\lambda+\rho-\mu,\mu)=0$.

Since $N$ is integrable and $(\lambda,\beta)=0$, we get $(\lambda,\alpha)\geq 0$
for each $\alpha\in\Sigma$. Thus $(\lambda+\rho,\mu)\geq 0$ and so
$(\lambda+\rho-\mu,\mu)\leq 0$.

Taking into account that $F_x(N)$ is $\fg_x$-integrable (where $\fg_x=\fsl_n^{(1)}$)
and $\mu\in\mathbb{Z}_{\geq 0} \Sigma_x+\mathbb{Z}\beta$,
$(\lambda+\rho-\mu,\mu)\geq 0$
and  the equality holds if and only if $\mu\in\mathbb{Z}\beta$.
Therefore $\mu\in\mathbb{Z}\beta$, that is $\mu\in \{0,\beta\}$. Hence
$$F_x(N)=L_{\fg_x}(\lambda|_{\fh_x})^{\oplus s},\ \text{ where }
s:=\dim F_x(N_{\lambda}\oplus N_{\lambda-\beta}).$$
Note that $N':=N_{\lambda}\oplus N_{\lambda-\beta}$ is a module over
a copy of $\fsl(1|1)$ generated by $\fg_{\pm\beta}$
(one has $x\in\fsl(1|1)$).
If $N=L(\lambda)$, then $N'$ is a trivial $\fsl(1|1)$-module;
and if $N/L(\lambda-\beta)=L(\lambda)$ then
$N'$ is a Verma $\fsl(1|1)$-module
of highest weight zero. The assertion follows.
\end{proof}

\subsection{}
\begin{cor}{corFx}
Let $L\in\cF_k$ be an irreducible module. Then $F_x(L)=0$ if and only if $L$ is typical.
For atypical $L$, $F_x(L)$ is integrable $\fsl_{n-1}^{(1)}$-module and
$F_x(L)\cong F_x(L')$ if and only $L$ and $L'$ lie in the same block.
\end{cor}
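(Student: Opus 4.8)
The statement has three parts, and the first two are quick. For typical $L$, the vanishing $F_x(L)=0$ is exactly \Prop{propaga}. Conversely, if $L=L(\lambda)$ is atypical, then \Prop{propMx2}(i) applied with $N=L$ gives $F_x(L)\cong L_{\fg_x}(\lambda|_{\fh_x})$ with $s=1$, which is nonzero; this settles the equivalence $F_x(L)=0\Leftrightarrow L$ typical. Moreover, by \Prop{gx} the algebra $\fg_x$ is the affinization of $\dot{\fg}_x=\fsl_{n-1}$, i.e. $\fg_x\cong\fsl_{n-1}^{(1)}$, and $L_{\fg_x}(\lambda|_{\fh_x})$ is integrable (this $\fg_x$-integrability of $F_x(N)$ is already established inside the proof of \Prop{propMx2}, where it is used to deduce complete reducibility). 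Hence for atypical $L$ the module $F_x(L)$ is a nonzero integrable irreducible $\fsl_{n-1}^{(1)}$-module.

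The substance is the block statement, and I would split it into constancy on a block and separation of distinct blocks. For constancy, by \Thm{maingraph} a block is a connected component of $\Gamma$, so it suffices to treat a single adjacency $L\sim L'$. Choose $\Sigma$ and an odd $\alpha\in\Sigma$ with $(\rho wt_\Sigma L,\alpha)=0$ and $\rho wt_\Sigma L'=\rho wt_\Sigma L-\alpha$, so that $L=L(\lambda)$, $L'=L(\lambda-\alpha)$ with $(\lambda,\alpha)=0$. Using the $W$-conjugacy of odd roots and the resulting identification $F_x\cong F_y$ for $y\in\fg_{\pm\alpha}$ (as in \S\ref{Sigmax} and the proof of \Prop{propMx2}), I may compute with $y\in\fg_{\pm\alpha}$, so $\fh_y=(\alpha^\perp)/\CC h_\alpha$. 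Both $L$ and $L'$ are $\Sigma$-tame with respect to $\alpha$, so \Prop{propMx2}(i) gives $F_y(L)\cong L_{\fg_y}(\lambda|_{\fh_y})$ and $F_y(L')\cong L_{\fg_y}((\lambda-\alpha)|_{\fh_y})$. Since $(\alpha,\eta)=0$ for every $\eta\in\alpha^\perp$ and $(\alpha,\alpha)=0$, the weight $\alpha$ restricts to $0$ on $\fh_y$; hence $\lambda|_{\fh_y}=(\lambda-\alpha)|_{\fh_y}$ and $F_x(L)\cong F_x(L')$. (Equivalently, one applies the six-term exact sequence of the Duflo--Serganova functor to $0\to L'\to V_\Sigma(\lambda-\rho_\Sigma)\to L\to 0$ from \Lem{integrablequotient}(iii): choosing the sign of $x$ so that $F_x(V_\Sigma(\lambda-\rho_\Sigma))=0$ by \Prop{propMx2}(ii), the connecting maps yield $F_x(L)\cong F_x(L')$ up to parity.) Iterating along the component shows $F_x$ is constant on the block.

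To separate distinct blocks, let $L_0$ be the unique module in the block of $L$ with $\lambda_0:=\rho wt_\Sigma L_0$ singular (\Thm{maingraph}, \Cor{paramcon}); by the previous paragraph $F_x(L)\cong F_x(L_0)$, so it suffices to recover the block from $F_x(L_0)$. As $\lambda_0$ is singular, $(\lambda_0,\alpha_1)=(\lambda_0,\alpha_2)=0$ for the two odd simple roots of $\Sigma$, and $F_x(L_0)\cong L_{\fg_x}(\lambda_0|_{\fh_x})$ by \Prop{propMx2}(i). Pairing $\lambda_0|_{\fh_x}$ with $\Sigma_x=\{\alpha_0,\alpha_1+\alpha_2+\alpha_3,\alpha_4,\ldots,\alpha_n\}$ recovers $(\lambda_0,\alpha_0),(\lambda_0,\alpha_3),\ldots,(\lambda_0,\alpha_n)$ (note $(\lambda_0,\alpha_1+\alpha_2+\alpha_3)=(\lambda_0,\alpha_3)$); together with $(\lambda_0,\alpha_1)=(\lambda_0,\alpha_2)=0$ these are all the $\Sigma$-labels of $\lambda_0$, so $F_x(L_0)$ determines $L_0$ as an $[\fg,\fg]$-module, i.e. $\lambda_0$ up to a shift in $\CC\delta$ (\S\ref{corLambdak}). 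Finally, by \Prop{lemCasimir} the Casimir $\Omega_{\fg_x}$ acts on $F_x(L_0)$ by the same scalar as $\Omega_\fg$ on $L_0$, and distinct $\delta$-shifts change this scalar (\S\ref{corLambdak}); hence the $\delta$-shift, and therefore $\lambda_0$ and the block, are determined. Thus $F_x(L)\cong F_x(L')$ forces $L$ and $L'$ into the same block.

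The main obstacle is the bookkeeping in the constancy step: the identity $F_x\cong F_y$ for root vectors attached to $W$-conjugate odd roots must be made precise in the affine setting and applied simultaneously to $L$ and $L'$ through the same inner automorphism, and one must track the resulting parity shift, so that the outcome is an honest isomorphism of $\fsl_{n-1}^{(1)}$-modules (which it is, once the parity grading is forgotten). By contrast, the separation step is essentially formal once \Prop{propMx2}(i) and \Prop{lemCasimir} are available.
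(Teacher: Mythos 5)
Your argument is correct and follows essentially the same route as the paper's proof: constancy on a block is obtained from \Prop{propMx2}(i) applied to the two adjacent simples occurring as sub and quotient of $V_\Sigma(\lambda-\rho_\Sigma)$, whose highest weights differ by the tame root $\alpha$, which restricts to zero on $\fh_x$; and separation is obtained by reducing to the unique $\Sigma$-singular module in each block. The only (harmless) deviation is at the very end, where you invoke \Prop{lemCasimir} to pin down the $\delta$-shift; the paper gets this for free, since $\lambda|_{\fh_x}$ already records $\lambda(d)$ (as $d\in\alpha_2^{\perp}$), so for $\Sigma$-singular weights the equality $\lambda|_{\fh_x}=\lambda'|_{\fh_x}$ forces $\lambda=\lambda'$ directly.
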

\begin{proof}
Retain notation of~\Cor{extquiver}.
If $L^j,L^{j+1}$ are simple objects in an atypical block $\cB$ and $j\geq 0$ (resp. $j<-1$), then there exists
a Verma module $M(\lambda)$ such that its maximal integrable quotient $V(\lambda)$
such that $V(\lambda)/L^j\cong L^{j+1}$ (resp., $V(\lambda)/L^{j+1}\cong L^j$).
From~\Prop{propMx2}, we get $F_x(L^j)\cong F_x(L^{j+1})$, so $F_x(L)$ is
a non-zero invariant of an atypical block.

Let us show that this invariant separates blocks.
Fix a set of simple roots $\Sigma$ and  take $x\in \fg_{-\alpha_2}$. Let  $\lambda^{\#}\in \fh_x$ be the highest weight of $F_x(L), F_x(L')$. Let us show that $L,L'$ are in the same block. Indeed, each block contains a unique
$\Sigma$-singular irreducible module. Thus we can (and will) assume that $L,L'$
are $\Sigma$-singular. Let $L=L(\lambda), L'=L(\lambda')$.
One has $\lambda^{\#}=\lambda|_{\fh_x}=\lambda'|_{\fh_x}$.
Since $\lambda,\lambda'$ are $\Sigma$-singular, $\lambda=\lambda'$, that is $L\cong L'$ as required.
\end{proof}

\subsection{}
Let us calculate the highest weight of $F_x(L)$.

Let $L=L_{\Sigma}(\lambda)$ be an atypical integrable module of level $k$.
Write $\Sigma=\{\alpha_0\}\cup\dot{\Sigma}$, where $\alpha_0$ is even and
$\dot{\Sigma}$ is a set of simple roots for $\fsl(1|n)$.
Let $\{\vareps_i\}_{i=1}^n\cup\{\delta_1\}$ be the standard notation for $\fsl(1|n)$;
then
$$\alpha_0=\delta-\vareps_1+\vareps_n,\ \alpha_1=\varesp_1-\delta_1, \alpha_2=\delta_1-\vareps_2,\ \ldots,\alpha_n=\vareps_{n-1}-\vareps_n.$$
Set $c_i:=(\lambda+\rho,\vareps_i)$ for $i=1,\ldots,n$ and $d:=(\lambda,\delta_1)$.
Note that these numbers determine  $L$ as a module over $[\fg,\fg]$.

We claim that either $c_1=c_2=b$ and $c_i-b$ is not divisible by $k+n-1$;
there exist a unique index $i$ such that $c_i-b$ is divisible by $k+n-1$.
One has
$$F_x(L(\lambda))=L_{\fsl_{n-1}^{(1)}}(\lambda^{\#}),$$
where $\lambda^{\#}$ has level $k$ and the marks $(\lambda^{\#}+\rho,\vareps_i)$
are obtained from $(c_1,\ldots,c_n)$ by throwing away one element $j$ with
$c_j-b$ divisible by $k+n-1$.

Indeed, set $L=L(\lambda)$. By~\Prop{propMx2}, $F_x(L(\lambda))=L_{\fg_x}(\lambda^{\#})$
for some $\lambda^{\#}\in\fh_x$ (and $\fg_x\cong \fsl_{n-1}^{(1)}$).
There exists $\Sigma'$ such that $L$ is $\Sigma'$-tame and $\Sigma'$
is obtained from $\Sigma$ by $L$-typical odd reflections, so
$\rho wt_{\Sigma} L=\rho wt_{\Sigma'} L$. Let $\beta\in \Sigma'$ be such that
$(\rho wt_{\Sigma'} L,\beta)=0$. Take $y\in\fg_{\beta}$.
By above, $F_x(L)$ is equivalent to $F_y(L)$, where $y\in\fg_{\beta}$
or $y\in\fg_{-\beta}$. Using~\Prop{propMx2} we get
$$F_y(L)=L_{\fg_y}(\lambda'|_{\fh_y}),$$
where $\lambda'=\lambda+\rho-\rho'$ and
$\fh_y=\{h\in \fh\cap \fsl_n^{(1)}| \beta(h)=0\}$.

Assume that  $\lambda+\rho$ is regular. Then there exists a unique $j$ such that
$c_j-b$ is divisible by $k+n-1$.
By above, $\fg_y$ has a set of simple roots $\Sigma_y=\{\alpha\in\Sigma_0|\ (\beta,\alpha)=0\}$. From~\ref{Sigmax} it follows that
for each $\alpha\in\Sigma_y$ one has
$$(\lambda^{\#}+1,\alpha)=(\lambda'+\rho',\alpha)=(\lambda+\rho,\alpha).$$
Therefore $\lambda^{\#}+\rho^{\#}$ has the marks $\{c_i\}_{i=1}^n\setminus\{c_j\}$ as required.

Assume that $\lambda+\rho$ is singular. Then, by~\ref{corLambdak},
$(\lambda+\rho,\alpha_1)=(\lambda+\rho,\alpha_2)=0$ (in particular, $\Sigma'=\Sigma$)
and $\alpha_1+\alpha_2$ is the only even positive root
orthogonal to $\lambda+\rho$.  Then
$c_1=c_2=b$ and $c_i-b$ is divisible by $k+n-1$ if and only if $i=1,2$.
Then $x\in \fg_{\alpha_j}$ for $j=1$ or $j=2$ and, as above,
$\lambda^{\#}+\rho^{\#}$ corresponds to $\{c_i\}_{i=1}^n\setminus\{c_j\}$.

 \section{Modules over simple  affine vertex  superalgebras}\label{sect6}
 Let $\fg$ be an untwisted affine Lie superalgebra, i.e.
 the affinization of a finite-dimensional
 Kac-Moody  Lie superalgebra $\dot{\fg}$.  Let $d\in\fh$
be the standard element ($[d,xt^s]=xt^s$ for $x\in\dot{\fg}$).
One has $\fg=[\fg,\fg]\oplus\mathbb{C}d$.

Recall that a $[\fg,\fg]$-module  (resp., $\fg$-module) $N$ is called {\em restricted} if for every $a\in\dot{\fg}, v\in N$ there exists $n$ such that $(at^m)v=0$
for each $m>n$. A particular case of the restricted $\fg$-modules are the {\em bounded modules}, i.e. the modules where $d$ acts diagonally with integral eigenvalues
bounded from above; as before, denote by $N^{top}$ the eigenspace with the maximal eigenvalue. A bounded module $N$ is called {\em almost irreducible} if any nontrivial
submodule of $N$ has a non-zero intersection with $N^{top}$.

Let $N$ be a restricted $[\fg,\fg]$-module of level $k$
such that $k\not=-h^{\vee}$. The Sugawara construction
equips $N$ with an action of the Virasoro algebra $\{L_n\}_{n\in\mathbb{Z}}$, see~\cite{K3}, 12.8 for details.
Moreover, the $[\fg,\fg]$-module structure on $N$ can be extended to a $\fg$-module structure by setting $d|_N:=-L_0|_N$.

For a restricted $\fg$-module the action of $L_0$ and the Casimir element $\Omega$
are related by the formula $\Omega=2(K+h^{\vee})(d+L_0)$. Therefore
the above procedure assigns to a restricted $[\fg,\fg]$-module of level $k\not=-h^{\vee}$ a restricted $\fg$-module with the zero action of the Casimir operator.

In this section $k\not=-h^{\vee}$ and we identify the restricted $[\fg,\fg]$-modules
of level $k$ and the restricted  $\fg$-modules of level $k$ which are annihilated by the Casimir operator of $\fg$.

Let $\fg^{\#}$ be the "largest affine subalgebra"
 of $\fg_{\ol{0}}$ (see~\S~\ref{largest})
 Let $k$ be such that
 $V_k(\fg)$ is integrable as a $\fg^{\#}$-module (for an appropriate
 normalization of the bilinear form, this means that $k$ is a non-negative integer).
In~\Thm{thmvertex} we prove that the $V_k(\fg)$-modules  are the restricted $[\fg,\fg]$-modules of level $k$
which are $\fg^{\#}$-integrable. We show that
the irreducible bounded $V_k(\fg)$-modules are highest weight modules
if and only if the Dynkin diagram of $\fg_{\ol{0}}$ is connected ($\fg$
is  $\fsl(1|n)^{(1)}$ or
$\mathfrak{osp}(n|m)^{(1)}$ for $n=1,2$), see~\S~\ref{claimpos} and~\ref{badexample2}.
 For $\fg=\fsl(1|n)^{(1)}$ one has
 $\fg^{\#}=\fsl_n^{(1)}$. Hence for a positive integer $k$ the category of
 finitely generated $V_k(\fg)$-modules with finite-dimensional
 weight spaces is the full subcategory of $\cF^k$ with the objects annihilated
 by the Casimir operator.

\subsection{Restricted and bounded modules over affine Lie superalgebras}
If $\fg$ is an affine Lie algebra, then,
by~\cite{GK1}, Thm. 3.2.1, the
 restricted integrable $[\fg,\fg]$-modules are completely reducible and
 the irreducible ones are  highest weight modules. The situation is similar for
$\fg=\mathfrak{osp}(1|2n)^{(1)}$, but is different for other affine Lie superalgebras, see~\S~\ref{exarest} below.

 A bounded  irreducible integrable $[\dot{\fg}_{\ol{0}},\dot{\fg}_{\ol{0}}]^{(1)}$-module is a highest weight module, see~\S~\ref{claimpos}.
An interesting question is whether an {\em irreducible  restricted } $\fg$-module which is $[\dot{\fg}_{\ol{0}},\dot{\fg}_{\ol{0}}]^{(1)}$-integrable is a highest weight module.

\subsubsection{}
\begin{lem}{claimpos}
If $N$ is a bounded $\fg$-module which is $[\dot{\fg}_{\ol{0}},\dot{\fg}_{\ol{0}}]^{(1)}$-integrable, then
$N^{\fn}\not=0$.
\end{lem}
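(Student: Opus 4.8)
The plan is to locate a highest-weight vector inside the top $d$-eigenspace. Since $N$ is bounded, the $d$-eigenvalues are integral and bounded above, so the maximal eigenvalue is attained and $N^{\top}\neq 0$. I would then decompose $\fn$ according to the $t$-grading: its degree-zero part is the positive nilradical $\dot{\fn}$ of $\dot{\fg}$ (sitting inside $\dot{\fg}\otimes t^0$), while its part of positive degree is $\bigoplus_{s\geq 1}\dot{\fg}\otimes t^s$. The latter strictly raises the $d$-eigenvalue and hence annihilates $N^{\top}$, whereas $\dot{\fn}$ preserves $N^{\top}$. Thus $\fn$ preserves $N^{\top}$ and acts on it through $\dot{\fn}$, and any vector of $N^{\top}$ killed by $\dot{\fn}$ is automatically killed by all of $\fn$. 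So it suffices to prove $(N^{\top})^{\dot{\fn}}\neq 0$.

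Next I would check that $\dot{\fn}$ acts locally nilpotently on $N^{\top}$. Every even positive root vector of $\dot{\fg}$ is a root vector of the semisimple algebra $[\dot{\fg}_{\ol0},\dot{\fg}_{\ol0}]$, hence acts locally nilpotently by the assumed $[\dot{\fg}_{\ol0},\dot{\fg}_{\ol0}]^{(1)}$-integrability. For an odd root vector $x\in\dot{\fn}$ one has $x^2=\tfrac12[x,x]$, which is either $0$ (when the root is isotropic) or an even positive root vector; in both cases $x^2$ acts locally nilpotently, and therefore so does $x$. Hence the whole finite-dimensional nilpotent Lie superalgebra $\dot{\fn}$ acts on $N^{\top}$ by locally nilpotent operators.

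The main step is then a local form of Engel's theorem: if a finite-dimensional nilpotent Lie superalgebra $\mathfrak{a}$ acts on a nonzero space $V$ so that every $x\in\mathfrak{a}$ is locally nilpotent, then $V^{\mathfrak{a}}\neq 0$. I would prove this by induction on $\dim\mathfrak{a}$: choose a homogeneous ideal $\mathfrak{a}'\subset\mathfrak{a}$ of codimension one containing $[\mathfrak{a},\mathfrak{a}]$; by induction $V^{\mathfrak{a}'}\neq 0$; since $\mathfrak{a}'$ is an ideal, the remaining generator $x\notin\mathfrak{a}'$ preserves $V^{\mathfrak{a}'}$ and acts locally nilpotently there, hence has a nonzero kernel inside $V^{\mathfrak{a}'}$, giving $V^{\mathfrak{a}}\neq 0$. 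Applying this with $\mathfrak{a}=\dot{\fn}$ and $V=N^{\top}$ produces a nonzero $w\in N^{\top}$ with $\dot{\fn}w=0$; by the first paragraph $\fn w=0$, so $N^{\fn}\neq 0$.

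I expect the only genuine subtlety to be this common-kernel step: a priori $N^{\top}$ need not be finite-dimensional, so one cannot invoke the usual finite-dimensional Engel theorem directly, and it is essential that mere local nilpotency (coming from integrability for the even root vectors and from $x^2$ for the odd ones) suffices to run the induction. Everything else — the attainment of the maximal $d$-eigenvalue and the reduction of $\fn$ to $\dot{\fn}$ on $N^{\top}$ — is a direct consequence of the $t$-grading.
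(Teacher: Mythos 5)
Your argument is correct and essentially the paper's: both reduce the problem to showing $(N^{\top})^{\dot{\fn}}\neq 0$, obtain local nilpotency of $\dot{\fn}$ from integrability (using $x^2=\tfrac12[x,x]$ for odd $x$), and then run an Engel-type induction through a chain of ideals of the nilpotent superalgebra $\dot{\fn}$ --- the paper via the derived series with abelian subquotients, you via codimension-one homogeneous ideals, which is the same induction. The one point to tighten is that you verify local nilpotency only for root vectors while your induction may select a complementary generator that is a sum of root vectors; either choose that generator to be a root vector at each step (always possible, since all the ideals involved are sums of root spaces), or argue as the paper does that integrability makes $N^{\top}$ a direct sum of finite-dimensional $[\dot{\fg}_{\ol{0}},\dot{\fg}_{\ol{0}}]$-modules, whence \emph{every} element of $\dot{\fn}_{\ol{0}}$ acts locally nilpotently.
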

 \begin{proof}
 Set $E:=N^{top}$. Since $E^{\dot{\fn}}\subset N^{\fn}$,
it  is enough to show that $E^{\dot{\fn}}\not=0$.

Set $\ft:=[\dot{\fg}_{\ol{0}},\dot{\fg}_{\ol{0}}]$.
Note that $E$ is a  $\dot{\fg}$-module which is $\ft$-integrable. Therefore
$E$ is a direct sum of finite-dimensional $\ft$-modules. In particular,
$\dot{\fn}_{\ol{0}}$ acts locally nilpotently on $E$. Therefore
$\dot{\fn}$ acts locally nilpotently on $E$. Let $0=\ft^{0}\subset\ft^{1}\subset\ldots\subset\ft^s=\dot{\fn}$
be the derived series of $\dot{\fn}$ ($\ft^{i}=[\ft^{i+1},\ft^{i+1}]$).
Set $E(0):=E$ and $E(i):=E(i-1)^{\ft^{i}}$ for $i=1,\ldots, s$.
By induction  $E(i)\not=0$, since
$\ft^{i}/\ft^{i-1}$ is a finite-dimensional abelian Lie superalgebra which acts
locally nilpotently on $E(i-1)$. Hence $E^{\dot{\fn}}=E(s)\not=0$
as required.
\end{proof}

\subsubsection{}\label{exarest}
 By above, an irreducible  bounded $\fsl(1|n)^{(1)}$-module which is $\fsl_n^{(1)}$-integrable is $\fsl(1|n)^{(1)}$-integrable (since it is a highest weight module).
Below we  give an example of a cyclic bounded
$\fsl(1|2)^{(1)}$-module which is $\fsl_2^{(1)}$-integrable, but is
not $\fsl(1|2)^{(1)}$-integrable (the action of $\fh$ is not locally finite).

Consider the usual $\mathbb{Z}$-grading on $\fsl(1|2)$:
$\dot{\fg}=\dot{\fg}_{-1}\oplus \dot{\fg}_{0}\oplus \dot{\fg}_1$, where $\dot{\fg}_0=\dot{\fg}_{\ol{0}}=\fsl_2\times \mathbb{C}z$
and $\dot{\fg}_{\pm 1}$ are irreducible $\fsl_2$-modules.
Let $f,h,e$ be the standard generators of $\fsl_2$. Consider the triangular
decomposition of $\fg$ with $\fn=\mathbb{C}e+\dot{\fg}_1+\sum_{s=1}^{\infty}\dot{\fg}t^s$.

View $\mathbb{C}[z]$ as a module over $\fp:=\fh+\dot{\fg}_0+\fn$ by the trivial action
of  $\mathbb{C}d+\dot{\fg}_0+\fn$
and $K$ acting by $Id$. Consider the induced module $M:=Ind_{\fp}^{\fg} \mathbb{C}[z]$.
Then $M$ has the central charge $1$
and $M^{top}$ is a free $\mathbb{C}[z]$-module. As an $\fsl_2$-module
$M^{top}$ is the direct sum of countably many
copies of $\Lambda\fg_{-1}$, so
$e^2 M^{top}=0$.

From~\Cor{corV} below it follows that
$M$ has an almost irreducible quotient $N$ which is
  $\fsl_2^{(1)}$-integrable and $N^{top}=M^{top}$. Since $z$ acts freely on $M^{top}$,
  $N$ is not $\fsl(1|n)^{(1)}$-integrable. Note that $M$
  is bounded and cyclic (generated by the image of $1\in\mathbb{C}[z]$), so $N$ is also bounded and cyclic. It is not hard to see that the Casimir acts freely on $N$.

  \subsection{The subalgebra $\fg^{\#}$}\label{largest}
 Recall that (for affine $\fg$) the $[\dot{\fg}_{\ol{0}},\dot{\fg}_{\ol{0}}]^{(1)}$-integrable modules
 exist only on level zero or in the case when the Dynkin diagram of
 $\fg_{\ol{0}}$ is connected, see~\cite{KW2}.
 We consider the integrability with respect to the "largest affine subalgebra"
 of $\fg_{\ol{0}}$, see below.

 Recall that $\dot{\fg}_{\ol{0}}$ is a  reductive Lie algebra
 and it can be decomposed as $\dot{\fg}_{\ol{0}}=\dot{\fg}^{\#}\times \dot{\ft}$,
 where $\dot{\fg}^{\#}$ is a simple Lie algebra (the "largest part" of $\dot{\fg}_{\ol{0}}$), i.e.:

 for $\dot{\fg}=\fsl(m|n), \mathfrak{osp}(m|n)$ with $n\geq m$
 one has $\dot{\fg}^{\#}=\fsl_n, \mathfrak{sp}_n$ respectively;

 for $\dot{\fg}=\mathfrak{osp}(m|n)$ with $m>n$ one has $\dot{\fg}^{\#}= \mathfrak{so}_m$;

 for the exceptional Lie superalgebras $F(4), G(3)$  one has $\dot{\fg}^{\#}=B_3, G_2$ respectively;

 for $D(2,1,a)$ we have $\dot{\fg}_{\ol{0}}=A_1\times A_1\times A_1$ with the corresponding roots $\alpha_1,\alpha_2,\alpha_3$ subject to the relation
 $||\alpha_1||^2:||\alpha_2||^2:||\alpha_3||^2=1:a:(-a-1)$;
 we take $\dot{\fg}^{\#}=A_1$, which corresponds any copy of $A_1$ if $a\not\in\mathbb{Q}$ and the first copy of $-1<a<0$ (see~\cite{GK2}, 6.1).

We have a natural embedding of the affine algebra $\fg^{\#}$
(which is the affinization of $\dot{\fg}^{\#}$) to $\fg_{\ol{0}}$.

\subsection{Modules over affine  vertex superalgebras}
Let $V^k(\fg)$ be the
 affine vertex algebra and $V_k(\fg)$ be its simple quotient.

There is a natural equivalence between
the categories of $V^k(\fg)$-modules and the restricted $[\fg,\fg]$-modules of level $k$ if $k$ is not critical, see~\cite{FZ}, Thm. 2.4.3.

If $\fg$ is a Lie algebra and $k\not=0$ is such that $V_k(\fg)$ is $[\fg,\fg]$-integrable, then the $V_k(\fg)$-modules correspond to the integrable $[\fg,\fg]$-modules, see~\cite{FZ}, Thm. 3.1.3 and~\cite{DLM}, Thm. 3.7.

\subsubsection{}
\begin{thm}{thmvertex}
If $V_k(\fg)$ is integrable as a $\fg^{\#}$-module, then the $V_k(\fg)$-modules
are the restricted $[\fg,\fg]$-modules of level $k$ which are integrable over $\fg^{\#}$.
As $\fg^{\#}$-modules
these modules are direct sums of irreducible integrable highest weight modules.

The $V_0(\fg)$-modules are the direct sums of the trivial modules.
\end{thm}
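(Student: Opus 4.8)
My plan is to reduce the statement to the corresponding, already available, result for the affine Lie \emph{algebra} $\fg^{\#}$. By~\cite{FZ}, Thm.~2.4.3, the $V^k(\fg)$-modules are exactly the restricted $[\fg,\fg]$-modules of level $k$, and such a module is a $V_k(\fg)$-module precisely when the maximal proper ideal $I$ of $V^k(\fg)$ acts on it by zero. Let $\theta$ be the highest root of $\dot{\fg}^{\#}$, fix $e_\theta\in\dot{\fg}^{\#}\subset\fg$, and put $v:=(e_\theta t^{-1})^{k+1}\vac$; under the vertex algebra morphism $V^k(\fg^{\#})\to V^k(\fg)$ induced by $\fg^{\#}\hookrightarrow\fg_{\ol{0}}$, this is the image of the singular vector generating the maximal ideal of $V^k(\fg^{\#})$. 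Denote by $I'\subseteq I$ the ideal of $V^k(\fg)$ generated by $v$. The key input is the even-level equivalence: for any restricted $[\fg,\fg]$-module $M$ of level $k$, the field $Y(v,z)$ acts by zero on $M$ if and only if $M$ is integrable over $\fg^{\#}$. This follows by restricting the action to the affine Lie algebra $\fg^{\#}$ and applying~\cite{FZ}, Thm.~3.1.3 together with~\cite{DLM}, Thm.~3.7, using that the maximal ideal of $V^k(\fg^{\#})$ is generated by $v$.

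The inclusion from left to right is then immediate. Since $V_k(\fg)$ is $\fg^{\#}$-integrable by hypothesis and $\vac$ generates an integrable $\fg^{\#}$-submodule, the image of $v$ vanishes in $V_k(\fg)$; thus $v\in I$ and $I'\subseteq I$. Hence $Y(v,z)$ acts by zero on every $V_k(\fg)$-module $M$, and the equivalence above shows that $M$ is $\fg^{\#}$-integrable. Being restricted and $\fg^{\#}$-integrable, $M$ is, by~\cite{GK1}, Thm.~3.2.1, completely reducible over the affine Lie algebra $\fg^{\#}$ with all irreducible summands integrable highest weight modules; this is the second assertion of the theorem.

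For the reverse inclusion I must upgrade ``$v$ acts by zero'' to ``$I$ acts by zero''. If $M$ is restricted of level $k$ and $\fg^{\#}$-integrable, the equivalence gives $Y(v,z)=0$ on $M$, so the whole ideal $I'$ annihilates $M$ and $M$ becomes a module over $\bar V:=V^k(\fg)/I'$. It therefore suffices to prove $I=I'$, i.e.\ that $\bar V$ is simple, and I expect this to be the main obstacle. (It is in fact equivalent to the theorem: $\bar V$ is itself restricted, of level $k$ and $\fg^{\#}$-integrable, so were it not simple its maximal ideal would act nontrivially on the $\fg^{\#}$-integrable module $\bar V$.) To attack simplicity I would use that $\bar V=\bigoplus_{n\geq 0}\bar V_n$ is graded by conformal weight with $\bar V_0=\CC\vac$ and, by PBW, with finite-dimensional homogeneous components. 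A nonzero proper ideal $\bar I$ is graded and meets $\bar V_0$ trivially; on its lowest nonzero component the operators $a\otimes t^{m}$ ($a\in\dot{\fg}$, $m\geq 1$) lower the conformal weight and hence act by zero, so choosing a $\dot{\fg}$-highest weight vector inside this finite-dimensional space produces $w\neq\vac$ annihilated by $\dot{\fn}^{+}$ and by $\dot{\fg}\otimes t^{\geq 1}$, that is, a genuine $\fg$-singular vector generating an $\fg^{\#}$-integrable highest weight submodule of $\bar V$. The remaining task is to exclude such $w$; here I would invoke the analysis of singular vectors of vacuum modules and the simplicity criterion of Gorelik--Kac (cf.~\cite{GK2}), which identify the maximal ideal of $V^k(\fg)$ with $I'$ under the $\fg^{\#}$-integrability hypothesis. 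Granting $I=I'$, every restricted $\fg^{\#}$-integrable module of level $k$ is annihilated by $I$, hence is a $V_k(\fg)$-module, and the two descriptions coincide.

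Finally, the statement about $V_0(\fg)$ is the degenerate case $k=0$: there $v=e_\theta t^{-1}\vac$, and since $V_0(\fg^{\#})=\CC$ the ideal $I'$ already kills $\dot{\fg}^{\#}\otimes t^{-1}\vac$; a direct check, using the odd operator product expansions at level $0$, collapses the remaining degree-one vectors as well, so $V_0(\fg)=\CC$. Its modules are then vector spaces with the zero $[\fg,\fg]$-action, i.e.\ direct sums of trivial modules.
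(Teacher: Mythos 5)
Your overall route is the same as the paper's: identify $V^k(\fg)$-modules with restricted level-$k$ modules via~\cite{FZ}, observe that the singular vector $v=f_0^{k+1}\vac$ lies in the image of $V^k(\fg^{\#})$ so that $Y(v,z)=0$ on $M$ exactly when $M$ is a $V_k(\fg^{\#})$-module, and then use~\cite{DLM}, Thm.~3.7 and~\cite{GK1}, Thm.~3.2.1 to translate this into $\fg^{\#}$-integrability and complete reducibility over $\fg^{\#}$. That part matches the paper's proof essentially verbatim, and your ``left to right'' direction is fine. The genuine gap is exactly the step you flag as the main obstacle: showing that the ideal $I'$ generated by $v$ is the whole maximal ideal, i.e.\ that $V^k(\fg)/I'$ is simple. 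Your remark that this is ``equivalent to the theorem'' is circular as a proof strategy, and your direct attack (extract a $\fg$-singular vector $w$ from the lowest graded component of a putative proper ideal and then ``exclude such $w$'') stops precisely where the work begins --- no argument excluding $w$ is given, and \cite{GK2} does not supply a ready-made simplicity criterion in the form you invoke. The paper closes this gap by a character argument: by~\cite{GK2}, the character of \emph{any} $(\fg^{\#}+\dot{\fg}_{\ol 0})$-integrable quotient of the non-critical Verma module $M(k\Lambda_0)$ is given by the Kac--Wakimoto formula, so all such quotients have equal characters and the $\fg^{\#}$-integrable quotient of the vacuum module $V^k$ is unique (Theorem~\ref{thmvert1}); one then checks that $V^k/I'$ is itself $\fg^{\#}$-integrable by verifying local nilpotency of the simple root vectors of $\fg^{\#}$ on the image of $\vac$ (\cite{K3}, Lemmas 3.4, 3.5), whence $V^k/I'=L(k\Lambda_0)=V_k(\fg)$ (Corollary~\ref{corvert1}). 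You should replace your singular-vector exclusion sketch by this uniqueness-of-integrable-quotient argument, or else actually carry out the exclusion; as written, the reverse inclusion (every restricted $\fg^{\#}$-integrable module of level $k$ is a $V_k(\fg)$-module) is not established. The $k=0$ case is also handled more simply in the paper: $V_0(\fg)$ is one-dimensional, so its modules are the restricted level-zero modules annihilated by $[\fg,\fg]$, with no need to analyze the degree-one component of the ideal.
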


\begin{rem}{remy}
Normalize the  non-degenerate bilinear form by the condition $(\alpha,\alpha)=2$,
where $\alpha$ is the longest root in $\dot{\fg}^{\#}$. Then $V_k(\fg)$ is integrable over $\fg^{\#}$ if and only if $k$ is a non-negative integer.
\end{rem}

 \subsection{Proof of~\Thm{thmvertex}}
Introduce the vacuum $\fg$-module of level $k$:
$$V^k:=\Ind_{\dot{\fg}+\fn+\fh}^{\fg} \mathbb{C}_k,$$
where $\mathbb{C}_k$ is the trivial $\dot{\fg}+\fn$-module with
$K$ acting by $k Id$ and $d$ acting by zero. As a $[\fg,\fg]$-module
$V^k(\fg)$ is isomorphic to $V^k$.

Let $\Lambda_0\in\fh^*$ be such that $\Lambda_0(K)=1, \Lambda_0(\dot{\fh})
=\Lambda_0(d)=0$. Note that $V^k$ is a $\dot{\fg}_{\ol 0}$-integrable quotient of
the Verma module $M(k\Lambda_0)$ and that $L(k\Lambda_0)$ is a unique simple quotient of $V^k$.
As a $[\fg,\fg]$-module
$V_k(\fg)$ is isomorphic to $L(k\Lambda_0)$.

Recall that for a given non-degenerate bilinear form $h^{\vee}=(\rho,\delta)$,
where $\rho$ is the Weyl vector and $\delta$ is the minimal imaginary root.

\subsubsection{}
\begin{thm}{thmvert1}
Let $k\not=-h^{\vee}$ be such that $L(k\Lambda_0)$ is $\fg^{\#}$-integrable.
Then $L(k\Lambda_0)$ is a unique $\fg^{\#}$-integrable quotient
of $V^k$.
\end{thm}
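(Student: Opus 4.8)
The plan is to identify the maximal $\fg^{\#}$-integrable quotient of $V^k$ explicitly and then prove it is simple. Let $\theta^{\#}$ be the highest root of $\dot{\fg}^{\#}$, so that $f_0^{\#}:=e_{\theta^{\#}}\otimes t^{-1}$ is the lowering operator for the affine node of $\fg^{\#}$, and set $N:=V^k/U(\fg)(f_0^{\#})^{k+1}v$, where $v$ is the vacuum. Since $v$ is annihilated by $\dot{\fg}$ and by the positive loop space $\bigoplus_{s\geq 1}\dot{\fg}\otimes t^{s}$, and since $V^k$ is already $\dot{\fg}^{\#}$-integrable (as $\dot{\fg}^{\#}\subset\dot{\fg}_{\ol{0}}$), the generator of $N$ is annihilated by suitable powers of every simple raising and lowering operator of $\fg^{\#}$; by the standard integrability criterion $N$ is $\fg^{\#}$-integrable. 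Conversely, any $\fg^{\#}$-integrable quotient of $V^k$ must kill $(f_0^{\#})^{k+1}v$, hence factors through $N$, so $N$ is the maximal $\fg^{\#}$-integrable quotient. As $L(k\Lambda_0)$ is $\fg^{\#}$-integrable by hypothesis, there is a surjection $N\twoheadrightarrow L(k\Lambda_0)$, and it suffices to prove $N=L(k\Lambda_0)$, i.e.\ that $N$ is simple.

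Observe first that the Casimir $\Omega_{\fg}=2(K+h^{\vee})(d+L_0)$ annihilates $V^k$: on the cyclic vector $v$ one has $dv=0$ and $L_0v=0$ (the Sugawara operator kills the vacuum), so $\Omega_{\fg}v=0$ and hence $\Omega_{\fg}=0$ on all of $V^k$ and on $N$. Suppose, for contradiction, that $N$ is not simple, and let $S\subset N$ be a nonzero proper submodule. Because the top $(V^k)^{top}=\mathbb{C}v$ is one-dimensional and $v$ generates, $S$ avoids it and lies in strictly lower $d$-energy; its top-energy component is a finite-dimensional subspace stable under the nilpotent subalgebra $\dot{\fn}$ (which preserves the energy), hence contains a vector $w$ killed by $\dot{\fn}$ and, by maximality of its energy, by all positive modes. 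Thus $w$ is a genuine $\fg$-singular vector of weight $\mu=k\Lambda_0-\beta$ with $0\neq\beta\in Q^{+}$, and $U(\fg)w$ is a $\fg^{\#}$-integrable highest weight module, so $\mu$ is dominant integral for $\fg^{\#}$.

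Now apply $\Omega_{\fg}w=0$. Since $\Omega_{\fg}$ acts on a singular vector of weight $\mu$ by $(\mu+\rho,\mu+\rho)-(\rho,\rho)$, we obtain $\|\mu+\rho\|^2=\|k\Lambda_0+\rho\|^2$, which rearranges to
$$(\beta,\,(k\Lambda_0+\rho)+(\mu+\rho))=0.$$
If $\fg$ were an ordinary affine Lie algebra this would already finish the argument: then $\fg^{\#}=\fg$, both $k\Lambda_0+\rho$ and $\mu+\rho$ are dominant and regular, so $(k\Lambda_0+\rho)+(\mu+\rho)$ pairs strictly positively with the nonzero $\beta\in Q^{+}$ (as it does with every simple root), contradicting the displayed identity and forcing $\beta=0$, i.e.\ $\mu=k\Lambda_0$. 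Writing $\beta=\dot{\beta}+m\delta$ with $m\geq0$ and using $(\Lambda_0,\delta)=1$, $(\rho,\delta)=h^{\vee}$, $(\delta,\delta)=0$, the identity becomes $(\dot{\beta},\dot{\beta})=2(\bar{\rho},\dot{\beta})+2m(k+h^{\vee})$, and $\fg^{\#}$-dominance of $\mu$ forces $-\dot{\beta}$ to be $\dot{\fg}^{\#}$-dominant while controlling the affine node.

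The main obstacle is that in the super case the positivity used above fails: $\fg$-dominance of $\mu$ is guaranteed only in the $\fg^{\#}$-directions, whereas $\beta$ may a priori have components along the finitely many roots of $\dot{\fg}$ lying outside $\dot{\fg}^{\#}$ — the complementary reductive factor $\dot{\ft}$ and the odd isotropic roots — for which $(\rho,\cdot)$ need not be positive. These are exactly the directions in which the sphere $\|\mu+\rho\|=\|k\Lambda_0+\rho\|$ can meet the $\fg^{\#}$-dominant cone away from the vacuum, so the crux is to rule out such components. The plan is to exploit that the top of $V^k$ is the \emph{trivial} $\dot{\fg}$-module (not merely $\dot{\fg}^{\#}$-trivial): the horizontal weight $\dot{\beta}$ of any singular vector must then be a sum of roots occurring in the adjoint copies $\dot{\fg}\otimes t^{-s}$, and combining this constraint with the Kac–Kazhdan linkage for $V^k$ should give first $\dot{\beta}=0$ and then, using $k+h^{\vee}\neq0$, that $m=0$. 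This forces $\beta=0$, contradicting $\mu\neq k\Lambda_0$; hence $N$ is simple and equals $L(k\Lambda_0)$, which is the asserted unique $\fg^{\#}$-integrable quotient of $V^k$.
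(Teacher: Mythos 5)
Your reduction of the theorem to the simplicity of the maximal $\fg^{\#}$-integrable quotient $N=V^k/U(\fg)(f_0^{\#})^{k+1}v$ is sound (the identification of $N$ as the maximal such quotient, via local nilpotence of the simple root vectors on the generator, is essentially the content of the paper's Corollary \ref{corvert1}, proved there \emph{after} the theorem), and the Casimir computation showing that any singular vector in a proper submodule has weight $\mu=k\Lambda_0-\beta$ with $||\mu+\rho||^2=||k\Lambda_0+\rho||^2$ and $\mu$ dominant for $\fg^{\#}$ is correct. But the proof stops exactly where the theorem begins. In the super case $k\Lambda_0+\rho$ is in general orthogonal to isotropic odd roots of $\dot{\fg}$ (the module is atypical), so the sphere $||\mu+\rho||^2=||k\Lambda_0+\rho||^2$ genuinely meets the $\fg^{\#}$-dominant cone at weights $\mu\neq k\Lambda_0$ --- for instance $\mu=k\Lambda_0-\beta$ for an isotropic $\beta$ with $(k\Lambda_0+\rho,\beta)=0$, and further weights obtained by adding multiples of $\delta$ and $\dot{\ft}$-components. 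Your final paragraph concedes this and replaces the argument by a plan (``should give first $\dot{\beta}=0$''); the observation that $\dot{\beta}$ is a sum of roots of the adjoint copies $\dot{\fg}\otimes t^{-s}$ is just $\beta\in Q^+$, which you already had, and no mechanism is supplied that excludes singular vectors at the atypical weights. Ruling those out is the entire difficulty, so as it stands the proposal has a genuine gap rather than a complete alternative proof.

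The paper closes this gap by a completely different mechanism: it invokes the Kac--Wakimoto-type character formula of \cite{GK2}, which says that \emph{every} $(\fg^{\#}+\dot{\fg}_{\ol 0})$-integrable quotient of the non-critical Verma module $M(k\Lambda_0)$ has one and the same character; since any $\fg^{\#}$-integrable quotient of $V^k$ is automatically $\dot{\fg}_{\ol 0}$-integrable, two such quotients related by a surjection and having equal characters must coincide, whence uniqueness. If you want to complete your approach you would need an input of comparable strength (e.g.\ that the character of $N$ equals that of $L(k\Lambda_0)$), at which point you are reproving the cited result.
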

\begin{proof}
From~\cite{GK2} it follows that
the character of a $(\fg^{\#}+\dot{\fg}_{\ol 0})$-integrable quotients of a non-critical
Verma module $M(k\Lambda_0)$ is given by the KW-character formula
(see Section 4  for the cases $h^{\vee}\not=0$ and for $A(n,n)^{(1)}$, and
 Section 6 for the remaining cases). Since $V^k$ is $\dot{\fg}_{\ol 0}$-integrable,
 its $\fg^{\#}$-integrable quotient is $(\fg^{\#}+\dot{\fg}_{\ol 0})$-integrable.
Hence such  quotients have the same character formula, so such quotient is unique.
\end{proof}

\subsubsection{}
We denote by $\vac$ the highest weight vector of $V^k$ (and its image in $L(k\Lambda_0)$).

We normalize the bilinear form as in~\Rem{remy} and fix a triangular decomposition in $\dot{\fg}$ in such a way
that the maximal root $\theta$ lies in the root system of $\dot{\fg}^{\#}$.
Then $\alpha_0=\delta-\theta$ is a simple root and
 $(\alpha_0,\alpha_0)=2$. Let $f_0$ be a non-zero element in $\fg_{-\alpha_0}$;
 note that $f_0\in\fg^{\#}$.

\subsubsection{}
\begin{cor}{corvert1}
Let  $k\not=-h^{\vee}$ be such that
$L(k\Lambda_0)$ is  $\fg^{\#}$-integrable. Then $L(k\Lambda_0)=V^k/I$,
where the submodule $I$ is generated by  $f_0^{k+1}\vac$.
\end{cor}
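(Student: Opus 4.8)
The plan is to realize $V^k/I$ as a \emph{nonzero} $\fg^{\#}$-integrable quotient of $V^k$ and then quote \Thm{thmvert1}, which says that $L(k\Lambda_0)$ is the unique $\fg^{\#}$-integrable quotient of $V^k$. Once $V^k/I$ is shown to be nonzero and $\fg^{\#}$-integrable, the uniqueness in \Thm{thmvert1} forces it to have the same Kac--Wakimoto character as $L(k\Lambda_0)$; since $L(k\Lambda_0)$ is the unique simple quotient of the highest weight module $V^k$ and $V^k/I$ surjects onto it, equality of characters yields $V^k/I=L(k\Lambda_0)$. Throughout I use that, by \Rem{remy} and the hypothesis, $k$ is a non-negative integer and $\langle k\Lambda_0,\alpha_0^{\vee}\rangle=k$, so $f_0^{k+1}\vac$ is defined.

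First I would verify that $w:=f_0^{k+1}\vac$ is a singular vector of weight $k\Lambda_0-(k+1)\alpha_0$. The vector $\vac$ is annihilated by every Chevalley generator $e_i$, and the Chevalley relation $[e_i,f_0]=\delta_{i0}h_0$ gives $e_i w=f_0^{k+1}e_i\vac=0$ for $i\neq 0$ (no sign issues, as $\alpha_0$ is even), while the $\fsl_2$-triple attached to $\alpha_0$ gives $e_0 w=(k+1)\bigl(\langle k\Lambda_0,\alpha_0^{\vee}\rangle-k\bigr)f_0^{k}\vac=0$. Hence $w$ is singular, so $I=U(\fg)w=U(\fn)w$ is spanned by weight vectors of weight strictly below $k\Lambda_0$; in particular $\vac\notin I$ and $V^k/I\neq 0$.

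The heart of the argument is the $\fg^{\#}$-integrability of $V^k/I$. The finite simple roots of $\fg^{\#}$ lie in $\dot{\fg}^{\#}\subset\dot{\fg}_{\ol 0}$, and $V^k$ is $\dot{\fg}_{\ol 0}$-integrable, so the corresponding $e_i,f_i$ act locally nilpotently on $V^k$ and hence on the quotient. For the affine simple root $\alpha_0$, the generator $e_0$ is locally nilpotent because $V^k/I$ is a highest weight module. The only remaining point — and the main obstacle — is local nilpotency of $f_0$ on \emph{all} of $V^k/I=U(\fg)\vac$, not merely on $\vac$. Here I would use that $-\alpha_0$ is a real (even) root, so $\operatorname{ad}f_0$ is locally nilpotent on $\fg$ and hence on $U(\fg)$. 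Writing an arbitrary vector as $u\vac$ and expanding $f_0^{M}u=\sum_{j}\binom{M}{j}\bigl((\operatorname{ad}f_0)^{j}u\bigr)f_0^{M-j}$ in $U(\fg)$, each summand vanishes once either $j$ exceeds the nilpotency degree of $\operatorname{ad}f_0$ on $u$ or $M-j\geq k+1$ (using $f_0^{k+1}\vac=0$ in $V^k/I$); so $f_0^{M}u\vac=0$ for $M$ large, which is the required local nilpotency.

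Assembling these facts, $V^k/I$ is a nonzero quotient of $V^k$ on which every Chevalley generator of $\fg^{\#}$ acts locally nilpotently, i.e. it is $\fg^{\#}$-integrable, and \Thm{thmvert1} gives $V^k/I=L(k\Lambda_0)$. I expect the only delicate points to be the parity bookkeeping in the super setting (harmless here since $\alpha_0$ is even) and the passage from ``$f_0$ annihilates $\vac$ to high order'' to ``$f_0$ is locally nilpotent on the whole module,'' which is precisely the classical mechanism underlying Kac's integrable-quotient construction.
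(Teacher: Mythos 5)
Your proof is correct and follows essentially the same route as the paper: exhibit $f_0^{k+1}\vac$ as a singular vector, verify that the Chevalley generators of $\fg^{\#}$ act locally nilpotently on the quotient, and conclude by the uniqueness statement of \Thm{thmvert1}. The only difference is that where the paper cites Lemmas 3.4 and 3.5 of \cite{K3} for the passage from nilpotency on the generator to local nilpotency on the whole module, you inline the proof of that mechanism via the expansion of $f_0^{M}u$ in terms of $(\operatorname{ad}f_0)^{j}u$.
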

\begin{proof}
Since $L(k\Lambda_0)$ is $\fg^{\#}$-integrable,
$f_0^{k+1}\vac$ is a singular vector in $V^k$.
Let $I$ be the submodule of $V^k$ generated by this vector.
By~\Thm{thmvert1}, it is enough to show that $V^k/I$ is $\fg^{\#}$-integrable.
From~\cite{K3},
Lemmas 3.4, 3.5, it suffices to check that
for each $\alpha$ in the set of simple roots of $\fg^{\#}$
the root spaces $\fg_{\pm\alpha}$ act nilpotently on $v$, where
 $v$ is the image of $\vac$ in $V^k/I$.
Clearly, $\fg_{\pm\alpha}\vac=0$  for $\alpha\not=\alpha_0$
and $\fg_{\alpha_0}v=\fg_{-\alpha_0}^{k+1}v=0$. The assertion follows.
\end{proof}

 \subsubsection{}
\begin{rem}{remy2}
\Thm{thmvert1} and~\Cor{corvert1} hold also in the case when $\fg$ is a twisted
 affinization ($\fg$ is any symmetrizable affine Lie superlagebra).
 In~\Cor{corvert1} the following change should be done if
$\frac{\alpha_0}{2}\in\Delta$: $f_0$ should be chosen in $\fg_{-\alpha_0/2}$
and $I$ is generated by  $f_0^{2k+1}\vac$. The proofs are the same.
\end{rem}

 \subsubsection{}
For each $a\in V^k(\fg)$ let $Y(a,z)$ be the corresponding vertex operator.
 The following lemma has to be standard (see, for example,~\cite{AM}, Prop. 3.4).

\begin{lem}{lemvert1}
Let $I\subset V^k(\fg)$ be a cyclic submodule generated by a  vector
$a\in V^k(\fg)$.
A $V^k(\fg)$-module $N$ is a $V^k(\fg)/I$-module if and only if
$Y(a,z) N=0$.
\end{lem}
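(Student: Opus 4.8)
The plan is to reformulate the statement as annihilation of the whole ideal $I$ and then reduce everything to the iterate (associativity) formula for module vertex operators. First I would observe that, since $V^k(\fg)$ is a module over itself, the cyclic submodule $I$ generated by $a$ is automatically a two-sided ideal (skew-symmetry turns every submodule into an ideal), so that $V^k(\fg)/I$ is again a vertex superalgebra and the phrase ``$V^k(\fg)/I$-module'' makes sense. A $V^k(\fg)$-module $N$ descends to a $V^k(\fg)/I$-module precisely when its structure map $Y_N$ vanishes on $I$, i.e. $Y_N(b,z)=0$ for every $b\in I$. This yields the easy direction at once: if $N$ is a $V^k(\fg)/I$-module then $a\in I$ forces $Y(a,z)N=0$.

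For the converse I would use that $I$, being the cyclic submodule generated by $a$, is spanned by the iterated modes $u^1_{(n_1)}\cdots u^r_{(n_r)}a$ with $u^i\in V^k(\fg)$, $n_i\in\mathbb Z$ and $r\geq 0$. The key tool is the iterate formula, which expresses
$$Y_N(u_{(n)}c,z)=\operatorname{Res}_{x}\Big((x-z)^n Y_N(u,x)Y_N(c,z)-(-1)^{p(u)p(c)}(-z+x)^n Y_N(c,z)Y_N(u,x)\Big),$$
with the two factors expanded in the appropriate domains. The crucial structural point is that $Y_N(c,z)$ occurs as a factor in \emph{both} terms on the right, so whenever $Y_N(c,z)=0$ one automatically has $Y_N(u_{(n)}c,z)=0$.

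I would then run an induction on $r$: the base case $r=0$ is the hypothesis $Y_N(a,z)=0$, and the inductive step applies the displayed formula with $c=u^2_{(n_2)}\cdots u^r_{(n_r)}a$ (for which $Y_N(c,z)=0$ by induction) and $u=u^1$. This gives $Y_N(b,z)=0$ for every spanning vector $b$, hence for all $b\in I$, so $IN=0$ and $N$ is a $V^k(\fg)/I$-module, completing the nontrivial implication.

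The hard part will be bookkeeping rather than conceptual: one must fix the expansion conventions and the super-sign in the iterate formula and check that it remains valid for modules over the infinite-dimensional affine vertex superalgebra $V^k(\fg)$. All of this is standard (cf. \cite{AM}, Prop. 3.4), and no genuinely new estimate or convergence argument is required beyond the formal calculus of fields.
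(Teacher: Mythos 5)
The paper offers no proof of this lemma: it is simply declared standard, with a pointer to \cite{AM}, Prop.~3.4. Your argument is the standard one and is correct; the only nontrivial direction is that $Y_N(a,z)=0$ forces $Y_N(b,z)=0$ for every $b$ in the cyclic submodule $I$, and your induction on the number of modes applied to $a$ via the iterate formula does exactly that, the decisive observation being that $Y_N(c,z)$ occurs as a factor in \emph{both} summands of the iterate formula, so its vanishing kills $Y_N(u_{(n)}c,z)$ regardless of expansion conventions. One small caveat: your parenthetical claim that skew-symmetry turns \emph{every} cyclic submodule into a two-sided ideal is not literally true for an arbitrary vertex (super)algebra --- one also needs $TI\subseteq I$, which can fail (e.g.\ the ideal generated by $x$ in the commutative vertex algebra $\mathbb{C}[x]$ with $T=d/dx$ is not $T$-stable). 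In the present setting this is harmless, since Section~6 assumes $k\neq -h^{\vee}$, so $V^k(\fg)$ carries the Sugawara conformal vector and $Ta=\omega_{(0)}a$ lies in the submodule generated by $a$; with that point noted, your proof is complete and supplies precisely the argument the paper leaves to the literature.
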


\subsubsection{}
By~\cite{GK1}, Thm. 3.2.1 any
 restricted integrable $[\fg^{\#},\fg^{\#}]$-modules
 is  a direct sum of irreducible integrable highest weight
$[\fg^{\#},\fg^{\#}]$-modules. Let us show that $V_k(\fg)$-modules
are restricted $[\fg,\fg]$-modules of level $k$ which are integrable over $[\fg^{\#},\fg^{\#}]$.

Take $k=0$. Then $V_k(\fg)$ is one-dimensional and $V_k(\fg)$-modules are
restricted $[\fg,\fg]$-modules of zero level which are annihilated by $[\fg,\fg]$.
Hence these are modules are the direct sums of trivial modules.

Take $k\not=0$. Then  $k\not=-h^{\vee}$.
From~\Lem{lemvert1} and~\Cor{corvert1}, we conclude that $V_k(\fg)$-modules are restricted $[\fg,\fg]$-modules
which are annihilated by $Y(f_0^{k+1}\vac,z)$.
Note that $Y(f_0^{k+1}\vac,z)\in V^k(\fg^{\#})$ and
$V_k(\fg^{\#}):=V^k(\fg^{\#})/I'$, where $I'$ is the $\fg^{\#}$-submodule
of $V^k(\fg^{\#})$ which is
generated by $f_0^{k+1}\vac$. Therefore the modules over $V_k(\fg)$
are those, which, viewed as $V^k(\fg^{\#})$-modules, are the modules over $V_k(\fg^{\#})$.
By~\cite{DLM}, Thm. 3.7, the  $V_k(\fg^{\#})$-modules
are  direct sums of irreducible integrable highest weight
$[\fg^{\#},\fg^{\#}]$-modules of level $k$.
We conclude that the  $V_k(\fg)$-modules
are the restricted integrable $[\fg^{\#},\fg^{\#}]$-modules of level $k$ as required.
This completes the proof of~\Thm{thmvertex}
\qed

\subsection{Positive energy $V_k(\fg)$-modules}
A  $V^k(\fg)$-module is called  {\em positive energy}
(see~\cite{DK}) if it is
$\mathbb{Z}$-graded $[\fg,\fg]$-modules of level $k$:
$M=\oplus_{m\in \mathbb{Z}} M_m$
with $(at^n)M_m\subset M_{m-n}$ with the grading bounded from the below.
For such a module we extend the $[\fg,\fg]$-action to the $\fg$-action by
 $dv:=-mv$ for $v\in M_m$. Thus
 the positive energy  $V^k(\fg)$-modules correspond to  the bounded $\fg$-modules of level $k$. (In~\cite{DLM} a similar object is called an admissible module;
in~\cite{FZ} all modules are assumed to be of this form.)

 A positive energy $V^k(\fg)$-module  is {\em ordinary} (see~\cite{DLM})
 if the grading is given by the action of $L_0$ and the homogeneous components are finite-dimensional. Thus the ordinary modules are the bounded $\fg$-modules of level $k$ with the zero action of the Casimir operator.

 \subsubsection{}
Let $\fg$ be such that the Dynkin diagram of $\fg_{0}$ is connected.
As for $\fsl(1|n)^{(1)}$-case, we call a $\fg$-module integrable if it is integrable over $[\dot{\fg}_{\ol{0}},\dot{\fg}_{\ol{0}}]^{(1)}$ and $\fh$ acts locally finite
with finite-dimensional generalized $\fh$-eigenspaces.
\Lem{claimpos} gives the following

\begin{cor}{corpos}
Let $\fg$ be such that the Dynkin diagram of $\fg_{0}$ is connected
and $k$ be a non-negative integer.
Then a positive energy $V_k(\fg)$-module
contains a singular vector ($v$ such that $\fn v=0$).
In particular, the irreducible positive energy $V_k(\fg)$-module
are $\fg$-integrable highest weight modules of level $k$.
\end{cor}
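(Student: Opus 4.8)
The plan is to reduce the first assertion to~\Lem{claimpos} and then to use irreducibility to upgrade the resulting singular vector to a genuine highest weight vector. As recalled above, a positive energy $V_k(\fg)$-module is the same thing as a bounded $\fg$-module $N$ of level $k$ which is a $V_k(\fg)$-module, and by~\Thm{thmvertex} these are exactly the bounded $[\fg,\fg]$-modules of level $k$ that are integrable over $\fg^{\#}$. In each case with connected Dynkin diagram of $\fg_{\ol 0}$ (namely $\fsl(1|n)^{(1)}$, $\mathfrak{osp}(1|n)^{(1)}$ and $\mathfrak{osp}(2|n)^{(1)}$) one has $[\dot\fg_{\ol 0},\dot\fg_{\ol 0}]=\dot\fg^{\#}$, the center of $\dot\fg_{\ol 0}$ being at most one-dimensional and disjoint from the derived subalgebra; hence $\fg^{\#}=[\dot\fg_{\ol 0},\dot\fg_{\ol 0}]^{(1)}$ and $N$ is a bounded, $[\dot\fg_{\ol 0},\dot\fg_{\ol 0}]^{(1)}$-integrable $\fg$-module. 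The first assertion $N^{\fn}\neq 0$ is then precisely~\Lem{claimpos}.

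For the second assertion assume $N$ is irreducible. First I would show that $N^{\top}$ is irreducible as a module over the subalgebra $\dot\fg=\dot\fg\otimes t^0$. Indeed, a nonzero $\dot\fg$-submodule $W\subset N^{\top}$ is killed by the part $\bigoplus_{s>0}\dot\fg\otimes t^{s}$ of $\fn$ (which raises the $d$-degree above its maximum) and is preserved by $\dot\fn$ and by $\fh$, so $\fn W\subset W$ and $\fh W\subset W$, whence $U(\fg)W=U(\fn^-)W$. Since $\fn^-$ strictly lowers the $d$-degree outside $\dot\fn^-$ and $U(\dot\fn^-)W=W$, the top-degree component of this submodule is again $W$, and irreducibility of $N$ forces $W=N^{\top}$. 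Thus $N^{\top}$ is an irreducible $\dot\fg$-module, integrable over $\dot\fg^{\#}$, and the argument in the proof of~\Lem{claimpos} gives $(N^{\top})^{\dot\fn}\neq 0$.

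The crux of the proof is to produce inside $(N^{\top})^{\dot\fn}$ a genuine $\dot\fh$-weight vector, i.e. an eigenvector for the central element $z$ of $\dot\fg_{\ol 0}$ (for $\mathfrak{osp}(1|n)^{(1)}$ there is no such $z$ and this step is vacuous). This is the main obstacle, and it is exactly where irreducibility is indispensable: the merely almost irreducible module of~\S\ref{exarest} has $z$ acting freely on its top, and there local finiteness of $\fh$---hence $\fg$-integrability---fails. Since $z\in\dot\fh$ normalizes $\dot\fn$, it preserves $(N^{\top})^{\dot\fn}$ and commutes with the semisimple part $\dot\fh^{\#}$. If $z$ had no eigenvector there, then, choosing a $\dot\fh^{\#}$-weight vector $w$ and any $a\in\mathbb{C}$, the proper $z$-stable subspace $(z-a)\mathbb{C}[z]w$ would generate, via $U(\dot\fn^-)$, a proper nonzero $\dot\fg$-submodule of $N^{\top}$ not containing $w$, contradicting the irreducibility just established. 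Hence a $z$-eigenvector $v\in(N^{\top})^{\dot\fn}$ exists.

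Such a $v$ satisfies $\fn v=0$ (it is annihilated by $\dot\fn$ and by $\bigoplus_{s>0}\dot\fg\otimes t^{s}$) and is an $\fh$-weight vector, $K$ and $d$ acting on $N^{\top}$ by the scalars $k$ and the maximal $d$-eigenvalue. By irreducibility $N=U(\fg)v=U(\fn^-)v$, so $N$ is a highest weight module; being generated from a single weight vector it is a weight module with finite-dimensional weight spaces, and it is $\fg^{\#}$-integrable by hypothesis, so it is $\fg$-integrable. This proves that the irreducible positive energy $V_k(\fg)$-modules are $\fg$-integrable highest weight modules of level $k$.
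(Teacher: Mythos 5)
Your reduction of the first assertion to \Lem{claimpos} is exactly the paper's (implicit) argument: you correctly use \Thm{thmvertex} together with the identification $\fg^{\#}=[\dot{\fg}_{\ol{0}},\dot{\fg}_{\ol{0}}]^{(1)}$ in the connected cases, and your preliminary steps for the second assertion --- the irreducibility of $N^{\mathrm{top}}$ as a $\dot{\fg}$-module and the nonvanishing of $(N^{\mathrm{top}})^{\dot{\fn}}$ --- are correct. You also correctly isolate the one genuinely delicate point, which the paper leaves implicit: the element $z$ spanning the centre of $\dot{\fg}_{\ol{0}}$ modulo $[\dot{\fg}_{\ol{0}},\dot{\fg}_{\ol{0}}]$ must act with an eigenvector on the singular vectors, and this is where irreducibility (as opposed to almost irreducibility, cf.\ the example of \S\ref{exarest}) enters.

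However, your argument for producing that $z$-eigenvector has a gap. The claim that $U(\dot{\fn}^-)(z-a)\CC[z]w$ does not contain $w$ rests implicitly on the assertion that every element of $U(\dot{\fn}^-)$ of $\dot{\fh}^{\#}$-weight zero is a scalar. This is false: for $\fsl(1|n)$ the product $f_{-\vareps_1}\cdots f_{-\vareps_n}$ of all negative odd root vectors has weight $-\sum_i\vareps_i$, which is orthogonal to every even root and hence vanishes on the Cartan subalgebra of $\fsl_n$; an analogous product of two odd root vectors exists for $\osp(2|2n)$. Such elements do carry nonzero $\operatorname{ad}z$-weight, but you cannot separate their contributions by $z$-eigenvalue precisely because you are assuming $z$ has no eigenvector on $\CC[z]w$; and in the quotient $N^{\mathrm{top}}$ (as opposed to the induced module, where PBW would save you) nothing prevents $u\cdot(z-a)p(z)w$ for such $u$ from landing back in $\CC[z]w$ and producing $w$. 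A correct route: $N^{\mathrm{top}}$ is an irreducible $\dot{\fg}$-module of countable dimension, so by Dixmier's version of Schur's lemma the Casimir element of $\cU(\dot{\fg})$ acts by a scalar; on the $\dot{\fh}^{\#}$-weight space $(N^{\mathrm{top}})^{\dot{\fn}}_{\mu}$ the Casimir reduces to $\tfrac{1}{(z,z)}z^2+(\text{linear in }z)+\const$ with $(z,z)\neq 0$, so $z$ satisfies a quadratic equation there and hence has an eigenvector. (Alternatively one can invoke the Zhu-algebra correspondence recalled later in this section, which realizes $N^{\mathrm{top}}$ as an irreducible module over $\cU(\dot{\fg})/(e_{\theta}^{k+1})$ and yields the same conclusion.) With that repair, the remainder of your argument goes through.
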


\subsubsection{}
Let $V$ be VOA and $A(V)$ be its Zhu algebra.

Thm. 2.30 in~\cite{DK} (see also Thm. 2.2.1 in~\cite{Z})
for the trivial twisting gives

\begin{prop}{}
The restriction functor $N\mapsto N^{top}$ is a functor
from the category of positive energy $V$-modules up to a shift of grading
to the category of $A(V)$-modules,
which is inverse to the induction functor
$E\mapsto V(E)$ from the $A(V)$-modules
to the full subcategory
of almost irreducible $V$-modules (up to a shift of grading).
In particular, these functors establish   a
bijective correspondence between the irreducible positive energy
$V$-modules and the irreducible $A(V)$-modules.
\end{prop}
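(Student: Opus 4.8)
The plan is to read this as the module-category form of Zhu's correspondence, in the version proved by De Sole--Kac (\cite{DK}, Thm.~2.30) and originally by Zhu (\cite{Z}); since the paper only needs the untwisted (trivial twisting) case, I would assemble the statement from the two functors and check that they are mutually inverse. First I would make the restriction functor precise. Recall that $A(V)=V/O(V)$ and that for $a\in V$ one has a zero mode $o(a)$, the unique coefficient of $Y(a,z)$ that preserves the $\mathbb{Z}$-grading. For a positive energy module $N$ the operator $o(a)$ preserves the top component $N^{top}$. The two identities at the heart of Zhu's construction are that $o(a)$ annihilates $N^{top}$ whenever $a\in O(V)$, and that $o(a\ast b)=o(a)o(b)$ on $N^{top}$, where $\ast$ is the Zhu product; together they show that $a\mapsto o(a)|_{N^{top}}$ descends to an algebra homomorphism $A(V)\to\End(N^{top})$, i.e.\ $N^{top}$ is naturally an $A(V)$-module. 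A morphism of positive energy modules sends top to top once the gradings are aligned, which is why one works up to a shift of grading, so $N\mapsto N^{top}$ is a functor to $A(V)$-modules.

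Next I would construct the induction functor $E\mapsto V(E)$. Given an $A(V)$-module $E$, one forms the universal positive energy $V$-module generated by $E$ placed in degree $0$; concretely this is the generalized Verma module built from the Frenkel--Zhu $A(V)$-bimodules, and it comes with an isomorphism $V(E)^{top}\cong E$ of $A(V)$-modules together with a universal property: any positive energy $V$-module $N$ equipped with an $A(V)$-map $E\to N^{top}$ admits a unique $V$-module map $V(E)\to N$ restricting to it on tops. The key structural fact, again from \cite{Z} and \cite{DK}, is that $V(E)$ is \emph{almost irreducible}, i.e.\ every nonzero submodule meets $V(E)^{top}$.

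Then I would verify that the two functors are mutually inverse. The isomorphism $V(E)^{top}\cong E$ gives $(-)^{top}\circ V(-)\cong\id$. For the other composite let $N$ be almost irreducible. The identity $N^{top}\to N^{top}$ induces, by the universal property, a $V$-module map $\varphi\colon V(N^{top})\to N$ which is the identity on tops; its image is a submodule containing $N^{top}$, hence is all of $N$, since an almost irreducible module is generated by its top. Its kernel $K$ is a submodule of the almost irreducible module $V(N^{top})$, so $K\neq 0$ would force $K\cap V(N^{top})^{top}\neq 0$; but $\varphi$ is injective on the top, a contradiction. Hence $\varphi$ is an isomorphism and $V(-)\circ(-)^{top}\cong\id$ on almost irreducible modules.

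Finally the bijection on irreducibles follows formally, and I would record it last. An irreducible positive energy module $N$ is almost irreducible, so $N\cong V(N^{top})$; moreover $N^{top}$ is then an irreducible $A(V)$-module, since a proper nonzero submodule $E'\subsetneq N^{top}$ would produce, via $V(E')\to V(N^{top})=N$, a proper nonzero $V$-submodule. Conversely, for an irreducible $A(V)$-module $E$ the module $V(E)$ has a unique maximal submodule (the largest one meeting the top trivially), and the resulting irreducible quotient has top $E$; these assignments invert one another. I expect the main obstacle to be precisely the two zero-mode identities of the first step and the construction and almost irreducibility of $V(E)$: these form the technical core of Zhu's theory, and in the super/twisted framework they require careful tracking of signs and of the modified Zhu product --- which is exactly what \cite{DK} supplies for the trivial twisting, with \cite{Z} as the classical antecedent.
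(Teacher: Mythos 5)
Your proposal is correct and matches the paper's treatment: the paper gives no independent proof but simply invokes Theorem 2.30 of \cite{DK} (and Theorem 2.2.1 of \cite{Z}) for the trivial twisting, which is exactly the Zhu-correspondence machinery you outline (zero modes descending to $A(V)$, the induced module $V(E)$, almost irreducibility, and the resulting bijection on irreducibles). Your sketch is a faithful expansion of the cited result rather than a different route, so there is nothing to compare.
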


\subsubsection{}
As in~\cite{FZ} Thm. 3.1.1, 3.1.2, the Zhu algebra of $V^k(\fg)$ is $\, \cU(\dot{\fg})$ and
the Zhu algebra of $V_k(\fg)$ is $\cU(\fg)/(e_{\theta}^{k+1})$, where
$f_0=e_{\theta}t^{-1}$ ($e_{\theta}\in\dot{\fg}_{\theta}$).

\begin{cor}{corV}
Let $k$ be a non-negative integer and
let $E$ be a $\dot{\fg}$-module satisfying $e_{\theta}^{k+1}E=0$.
 There exists  a unique almost irreducible $\fg^{\#}$-integrable $\fg$-module
$N={\oplus}_{i=0}^{\infty} N^i$ of level $k$
such that $N^i$ is the $i$th eigenspace of $-d$ and
$N^0=E$. This module has a natural structure of positive energy $V_k$-module. Moreover,
$N$ is irreducible  if and only if $E$ is irreducible.
\end{cor}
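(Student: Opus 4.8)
The plan is to read the corollary off from the Zhu correspondence recorded in the proposition above, applied to the vertex algebra $V=V_k(\fg)$, together with~\Thm{thmvertex}. The first observation I would make is that, since the Zhu algebra of $V_k(\fg)$ is $A(V_k(\fg))=\cU(\dot{\fg})/(e_{\theta}^{k+1})$, the hypothesis $e_{\theta}^{k+1}E=0$ says exactly that the $\dot{\fg}$-module $E$ is an $A(V_k(\fg))$-module. Everything then reduces to unwinding what the induction functor $E\mapsto V_k(\fg)(E)$ produces and re-expressing its output as a $\fg$-module.

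First I would apply the induction functor of the proposition to $E$, obtaining an almost irreducible positive energy $V_k(\fg)$-module $N$ with $N^{top}\cong E$; the proposition guarantees that, up to a shift of grading, $N$ is the unique almost irreducible positive energy $V_k(\fg)$-module with this top. Normalizing the shift so that the top component sits in degree $0$, the positive energy structure exhibits $N$ as a bounded $\fg$-module of level $k$: one has $N=\oplus_{i\geq 0}N^i$ with $N^i$ the $i$th eigenspace of $-d$ and $N^0=N^{top}=E$, which is precisely the module of the desired shape, equipped with its natural $V_k(\fg)$-module structure. To see that $N$ is $\fg^{\#}$-integrable I would quote~\Thm{thmvertex}: being a $V_k(\fg)$-module, $N$ is in particular a restricted $[\fg,\fg]$-module of level $k$ integrable over $\fg^{\#}$.

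For uniqueness I would run the argument in the reverse direction: any almost irreducible $\fg^{\#}$-integrable bounded $\fg$-module $N'$ of level $k$ with $(N')^0=E$ is, again by~\Thm{thmvertex}, a positive energy $V_k(\fg)$-module with top isomorphic to $E$, so the proposition, restriction and induction being mutually inverse on almost irreducible modules, forces $N'\cong N$. Finally, the irreducibility statement is the last assertion of the proposition, namely that the correspondence restricts to a bijection between irreducible positive energy $V_k(\fg)$-modules and irreducible $A(V_k(\fg))$-modules; hence $N=V_k(\fg)(E)$ is irreducible precisely when its top $E$ is.

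I expect the only genuinely delicate point to be the bookkeeping that identifies the two incarnations of the relevant category: one must check that the $d$-action coming from the positive energy grading agrees with the bounded $\fg$-module structure used in~\Thm{thmvertex}, so that ``almost irreducible positive energy $V_k(\fg)$-module'' and ``almost irreducible $\fg^{\#}$-integrable bounded $\fg$-module of level $k$'' really name the same objects. Once this matching is in place the corollary is a formal consequence of the two cited results, and no computation is required.
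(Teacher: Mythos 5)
Your proposal is correct and is essentially the paper's intended argument: the corollary is stated immediately after the Zhu-algebra proposition and the identification $A(V_k(\fg))=\cU(\dot{\fg})/(e_{\theta}^{k+1})$ precisely so that it follows by applying the induction/restriction correspondence to $E$ and invoking Theorem~\ref{thmvertex} for the $\fg^{\#}$-integrability, exactly as you do. Your observation that the $-d$-eigenspace grading matches the positive energy grading is already built into the paper's convention $dv:=-mv$ for $v\in M_m$, so no additional check is needed.
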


\subsubsection{}\label{badexample2}
Let $k$ be a non-negative integer.

If the Dynkin diagram of $\fg_{\ol{0}}$ is connected, then, by~\Cor{corpos},
an  irreducible $V_k(\fg)$-module is a highest weight modules.
Let us show that this does not hold if
the Dynkin diagram of $\fg_{\ol{0}}$ is not connected.

Let $\fg$ be such that the Dynkin diagram of $\fg_{\ol{0}}$ is not connected.
In this case $\dot{\fg}_{\ol{0}}=\dot{\fg}^{\#}\times \ft$, where $\ft$ is semisimple.
Take any irreducible $\ft$-module $E$ and view it as $\dot{\fg}_{\ol{0}}$-module
via the trivial action of $\dot{\fg}^{\#}$. Set $E':=\Ind_{\dot{\fg}_{\ol{0}}}^{\dot{\fg}} E$. As $\dot{\fg}^{\#}$ is the direct sum of copies of $\Lambda\dot{\fg}_{\ol{1}}$, so there exists
$m$ such that $e_{\theta}^m E'=0$.  Let $L$ be an irreducible quotient of $E'$.
By~\Cor{corV}, for each integral $k\geq m-1$
 there exists  an irreducible bounded $\fg^{\#}$-integrable $\fg$-module $N$ of level $k$ such that $N^{top}=L$. Note that $E$ is an $\ft$-quotient of $L$.
 In particular, $\fh$ acts locally finitely on $N$ if and only if
the Cartan algebra of $\ft$ acts locally finitely on $E$.

\subsubsection{}
Take $\fg=\fsl(2|1)^{(1)}$.
The example in~\S~\ref{exarest} gives a cyclic $\fg$-bounded $\fg^{\#}$-integrable module of level $1$ with a free action of the Casimir operator.
Viewing this module as a $[\fg,\fg]$-module we obtain a cyclic almost irreducible positive energy $V_1(\fg)$-module
with a free action of $L_0$ (in particular, this module is not ordinary
and is not $\fg$-integrable).


\begin{thebibliography}{MMM}

\bibitem[AM]{AM} D. Adamovi\'c, A. Milas, {\em Vertex operator algebras associated to modular
invariant representations for $A_1^{(1)}$}, Math. Res. Lett. 2 (1995), no. 5, 563-–575.

\bibitem[Ch]{Ch} V.~Chari, {\em Integrable representation of affine Lie algebras}, Invent. Math., \textbf{85}, no.~2, (1986), 317--335.

\bibitem[DK]{DK} A.~De Sole, V.~G.~Kac, {\em Finite vs affine W-algebras}, Jpn. J. Math. 1 (2006), no. 1, 137–-261.

\bibitem[DLM]{DLM} C.~Dong, H.~Li, G.~Mason {\em Regularity of rational vertex algebras},
 Adv. Math. 132 (1997), no. 1, 148–-166.


\bibitem[DS]{DS} M.~Duflo, V.~Serganova, {\em On associated varieties for Lie superalgebras},
arXiv:0507198.

\bibitem[FZ]{FZ} I.~Frenkel, Y.-C. Zhu, {\em Vertex operator algebras associated to representations of affine and Virasoro algebras}, Duke Math. J.
{\bf 66} (1992), 123--168.

\bibitem[FR]{FR} V.~Futorny, S.~Eswara~Rao, {\em Integrable representation of affine Lie superalgebras}, Trans. of AMS, \textbf{361}, no.~10, (2009), 5435--5455.


\bibitem[G]{G} J.~Germoni, {\em Indecomposable representations of general linear Lie superalgebras},
J. of Algebra,  \textbf{209}, (1998), 367--401.

\bibitem[GK1]{GK1} M. Gorelik, V.~G.~Kac, {\em On complete reducibility for infinite-dimensional Lie alegbars}, Adv. in Math., {\bf 226}, no. 2, (2011), 1911-1972.


\bibitem[GK2]{GK2} M. Gorelik, V.~G.~Kac,  {\em Characters of (relatively) integrable modules over affine Lie superalgebras},  Japan. J. Math. {\bf 10}, (2015), no. 2,
135--235.

\bibitem[K1]{K1} V.~G.~Kac, {\em Lie superalgebras},  Adv.
    in Math., \textbf{26}, no.~1 (1977), 8--96.


\bibitem[K2]{K2} V.~G.~Kac, {\em Laplace opeartors of infinite-dimensional Lie algebras and theta functions},  Proc. Natl. Acad. Sci. USA, \textbf{81}, (1984), 645--647.

\bibitem[K3]{K3} V.~G.~Kac, {\em Infinite-dimensional Lie algebras},
Third edition, Cambridge University Press, 1990.


\bibitem[KW1]{KW1} V.~G.~Kac, M.~Wakimoto {\em Integrable highest weight modules over affine Lie superalgebras and
number theory}, Lie Theory and Geometry, Birkhauser, Progress in Mathematics \textbf{123} (1994), 415--456.

\bibitem[KW2]{KW2} V.~G.~Kac, M.~Wakimoto {\em Integrable highest weight modules over affine Lie superalgebras and and Appell's
function}, Commun. Math. Phys. \textbf{215} (2001), 631--682.




\bibitem[S1]{S1}
V. Serganova, {\em On the Superdimension of an Irreducible Representation of a Basic Classical
Lie Superalgebra}, Supersymmetry in Mathematics and Physics, Lecture Notes in Mathematics
(2011), 253–-273.




\bibitem[S2]{S2} V.~Serganova,
{\em Kac-Moody superalgebras and integrability}, in
Developments and trends in infinite-dimensional Lie theory, 169-218,
Progr. Math., 288, Birkh\"auser Boston, Inc., Boston, MA, 2011.

\bibitem[Z]{Z} Y.~Zhu, {\em Modular invariance of characters of
vertex operator algebras}, JAMS {\bf 9} (1996) no. 1, 237--302.
\end{thebibliography}
\end{document}